\documentclass [11pt,reqno]{amsart}
\usepackage{amsmath,amssymb,verbatim,geometry,color}
\usepackage[all]{xy}
\usepackage{mathrsfs}
\usepackage[backref,pagebackref,pdftex,hyperindex]{hyperref}
\usepackage[pdftex]{graphicx}

\usepackage{tikz}
\usepackage{tikz-cd}

\def\XXint#1#2#3{{\setbox0=\hbox{$#1{#2#3}{\int}$ }
\vcenter{\hbox{$#2#3$ }}\kern-.6\wd0}}

\newcommand{\B}{\mathbb{B}}
\newcommand{\C}{\mathbb{C}}
\newcommand{\G}{\mathbb{G}}

\newcommand{\N}{\mathbb{N}}
\renewcommand{\P}{\mathbb{P}}
 \newcommand{\Q}{\mathbb{Q}}
 \newcommand{\R}{\mathbb{R}}
 \newcommand{\Z}{\mathbb{Z}}
\newcommand{\T}{\mathbb{T}}

\newcommand{\fa}{\mathfrak{a}}
\newcommand{\fb}{\mathfrak{b}}

\newcommand{\cA}{\mathcal{A}}

\newcommand{\cE}{\mathcal{E}}
\newcommand{\cF}{\mathcal{F}}

\newcommand{\cH}{\mathcal{H}}
\newcommand{\cJ}{\mathcal{J}}
\newcommand{\cL}{\mathcal{L}}

\newcommand{\cO}{\mathcal{O}}

\newcommand{\cZ}{\mathcal{Z}}
\newcommand{\cX}{\mathcal{X}}
\newcommand{\cY}{\mathcal{Y}}

\newcommand{\tcX}{\widetilde{\mathcal{X}}}

\newcommand{\om}{\omega}

\newcommand{\p}{\psi}

\newcommand{\ie}{{\rm i.e.\ }}

\newcommand{\an}{\mathrm{an}}

\DeclareMathOperator{\mab}{M}

\DeclareMathOperator{\hh}{H}

\DeclareMathOperator{\jj}{J}
\DeclareMathOperator{\env}{P}

\DeclareMathOperator{\Aut}{Aut}

\DeclareMathOperator{\codim}{codim}

\DeclareMathOperator{\DF}{DF}

\DeclareMathOperator{\Exc}{Exc}

\DeclareMathOperator{\Hom}{Hom}

\DeclareMathOperator{\MA}{MA}

\DeclareMathOperator{\Spec}{Spec}

\DeclareMathOperator{\vol}{vol}
\DeclareMathOperator{\Vol}{Vol}

\DeclareMathOperator{\ord}{ord}

\DeclareMathOperator{\PSH}{PSH}

\DeclareMathOperator{\tr}{tr}
\DeclareMathOperator{\redu}{red}

\DeclareMathOperator{\Ric}{Ric}

\newcommand{\ddc}{dd^c}
\newcommand{\dc}{d^c}
\newcommand{\de}{d}

\renewcommand{\div}{\mathrm{div}}

\newcommand{\NA}{\mathrm{NA}}

\newcommand{\D}{\Delta}

\newcommand{\cro}[1]{[\![#1]\!]}
\newcommand{\lau}[1]{(\!(#1)\!)}

\numberwithin{equation}{section}       
\newtheorem{thm}{Theorem}[section]
\newtheorem{prop}[thm] {Proposition}
\newtheorem{defi}[thm] {Definition}
\newtheorem{lem}[thm] {Lemma}

\newtheorem{cor}[thm]{Corollary}
\newtheorem{prop-def}[thm]{Proposition-Definition}

\newtheorem{rmk}[thm]{Remark}

\newtheorem{conj}[thm]{Conjecture}

\newtheorem{mainthm}{Theorem}

\newtheorem{maincor}{Corollary}

\makeatletter
\newtheorem*{rep@theo}{\rep@title}
\newcommand{\newreptheo}[2]{%
\newenvironment{rep#1}[1]{%
 \def\rep@title{#2 \ref{##1}}%
 \begin{rep@theo}}%
 {\end{rep@theo}}}
\makeatother

\newreptheo{theo}{Theorem}

\newreptheo{conjecture}{Conjecture}

\newreptheo{corol}{Corollary}

\newreptheo{Defin}{Definition}

 \theoremstyle{plain}
\newtheorem*{namedthm}{\namedthmname}
\newcounter{namedthm}

\makeatletter

\theoremstyle{remark}

 \usepackage{hyperref}
\hypersetup{
    unicode=false,        
    pdftoolbar=true,      
    pdfmenubar=true,       
    pdffitwindow=false,     
    pdfstartview={FitH},    
    pdftitle={A solution to the Yau-Tian-Donaldson conjecture through special Fujita Approximations},    
    pdfauthor={Trusiani},     
    colorlinks=true,       
   linkcolor=blue,          
    citecolor=blue,        
    filecolor=black,      
    urlcolor=blue}

\title[A solution to the YTD Conjecture through Special Fujita Approximations]{A solution to the Yau-Tian-Donaldson Conjecture \\ through Special Fujita Approximations}
\date{} 
\author{Antonio Trusiani  \\
{\tiny Appendix by S\'ebastien Boucksom}}

\address{Università di Roma Tor Vergata\\
Via della Ricerca Scientifica 1, 00133\\
Roma, Italy}
\email{\href{mailto:trusiani@mat.uniroma2.it}{trusiani@mat.uniroma2.it}}
\urladdr{\href{https://sites.google.com/view/antonio-trusiani/home}{https://sites.google.com/view/antonio-trusiani/home}}

 \address{Institut de Math\'ematiques  de Jussieu - Paris Rive Gauche  \\
 Sorbonne Université - Campus Pierre et Marie Curie \\
4 place Jussieu  \\
75252 Paris Cedex 05, France \\}

\email{\href{mailto:sebastien.boucksom@imj-prg.fr }{sebastien.boucksom@imj-prg.fr }}
\urladdr{\href{http://sebastien.boucksom.perso.math.cnrs.fr}{http://sebastien.boucksom.perso.math.cnrs.fr}}

\begin{document}

\maketitle
 
\begin{abstract}
    We show that any big line bundle on a smooth projective variety admits a \emph{special} Fujita approximation: the volume and the first Riemann-Roch coefficient are both approximated by those of ample $\Q$-line bundles on higher models. Exploiting previous works by Boucksom, Jonsson and Li, we solve the Boucksom-Jonsson Regularization Conjecture on the Non-Archimedean entropy functional. As a main consequence, we obtain a solution to the (uniform version of the) Yau-Tian-Donaldson Conjecture: a polarized smooth projective variety $(X,L)$ admits a cscK metric if and only if it is $\Aut^\circ(X,L)$-uniformly $K$-stable. This extends the known Yau-Tian-Donaldson correspondence for smooth Fano varieties.
\end{abstract}

\section{Introduction}

A central theme in Complex Geometry is to relate the existence of special metrics to algebro-geometric notions, and the following solution to the so-called Yau-Tian-Donaldson Conjecture (\cite{Yau87, Tian97, Don02}) represents one of the most classical and studied instances of such phenomena.
\begin{mainthm}[{Solution to the Yau-Tian-Donaldson Conjecture}]\label{thmA}
    Let $L\to X$ be an ample line bundle on a smooth projective variety. 
    Then the following are equivalent:
    \begin{itemize}
        \item[i)] there exists a constant scalar curvature K\"ahler (cscK) metric in $c_1(L)$;
        \item[ii)] $(X,L)$ is $\Aut^\circ(X,L)$-uniformly $K$-stable.
    \end{itemize}
    In particular if $\Aut^\circ(X,L)$ is trivial then the following are equivalent:
    \begin{itemize}
        \item[i)] there exists a unique cscK metric in $c_1(L)$;
        \item[ii)] $(X,L)$ is uniformly $K$-stable. 
    \end{itemize}
\end{mainthm}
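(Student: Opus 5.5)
The plan follows the variational approach of Berman--Boucksom--Jonsson and Li. The implication i)$\Rightarrow$ii) is already known. Suppose a cscK metric exists. By Berman--Darvas--Lu and Chen--Cheng, the Mabuchi functional $\mab$ is then coercive modulo $G=\Aut^\circ(X,L)$, i.e.\ $\mab\ge \delta\, \jj_G - C$ on the space of finite energy potentials $\cE^1$. Restricting to geodesic rays induced by test configurations, and using the slope formula for $\mab$ along such rays (Boucksom--Hisamoto--Jonsson), gives $\DF\ge \mab^{\NA}\ge \delta\, \jj^{\NA}_G$. This is $G$-uniform $K$-stability. So the real content is ii)$\Rightarrow$i).

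For ii)$\Rightarrow$i), I would follow Li's strategy, together with the Chen--Cheng existence theorem: properness of $\mab$ modulo $G$ implies existence of a cscK metric. Argue by contradiction. If $\mab$ is not $G$-coercive, the compactness and convexity argument of Darvas--He / BBJ produces a unit-speed psh geodesic ray $\phi_t$ in $\cE^1$ with $\mab$ non-increasing along it and $\jj_G$-normalized. Via the Boucksom--Jonsson non-Archimedean pluripotential theory, this ray corresponds to a maximal ray, i.e.\ a non-Archimedean potential $\phi^{\NA}\in\cE^{1,\NA}$. Since $\mab=\hh+(\text{energy part})$ and the energy terms behave well, the radial slope of $\mab$ along $\phi_t$ dominates $\mab^{\NA}(\phi^{\NA})$ computed with the entropy $\hh^{\NA}$ in the sense of Li's lower bound $\lim \mab(\phi_t)/t\ge \mab^{\NA}(\phi^{\NA})$. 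So $\mab^{\NA}(\phi^{\NA})\le 0$ while $\jj^{\NA}_G(\phi^{\NA})>0$. Uniform $K$-stability only gives $\mab^{\NA}\ge\delta \jj^{\NA}_G$ on finite-type potentials, i.e.\ those coming from test configurations. Transferring it to $\phi^{\NA}$ requires approximating $\phi^{\NA}$ by finite-type $\phi_j$ with $\jj^{\NA}(\phi_j)\to\jj^{\NA}(\phi)$ and $\hh^{\NA}(\phi_j)\to \hh^{\NA}(\phi)$ (together with $\hh^{\NA}$ upper bounds). This is the Boucksom--Jonsson Regularization Conjecture for entropy. By BJ and Li, it reduces to an entropy-approximation statement for the ideal-sequence models (filtrations) determined by $\phi^{\NA}$, and ultimately to a statement about big line bundles on blow-ups of $X\times\P^1$.

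The key and hardest step is therefore the special Fujita approximation from the abstract. For a big line bundle $L'$ on a smooth projective $Y$, I would find ample $\Q$-line bundles $A_j$ on birational models $\pi_j:Y_j\to Y$ with $A_j\le \pi_j^*L'$ such that both $\vol(A_j)\to\vol(L')$ and the first Riemann--Roch coefficient $A_j^{n-1}\cdot K_{Y_j}$ converges to the corresponding coefficient of $L'$. For $L'$ this coefficient is the $m^{n-1}$ term of $h^0(mL')$, defined via restricted volumes or the positive intersection product $\langle L'^{n-1}\rangle\cdot K_Y$. My first attempt would take $A_j$ to be the moving part of $|mL'|$ on a log resolution of its base ideal, with $m=m_j\to\infty$. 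Volume convergence is classical (Fujita/Lazarsfeld--Musta\c{t}\u{a}). The difficulty is the $K$-term: $K_{Y_j}=\pi_j^*K_Y+\sum a_E E$, and the discrepancy contributions $A_j^{n-1}\cdot E$ need not vanish. To control them I would choose the resolutions carefully, for instance via multiplier-ideal or ``special'' log resolutions with bounded log discrepancies. I would then show $\sum a_E\, A_j^{n-1}\cdot E\to 0$ using that $A_j$ converges to the positive part in the sense of BFJ differentiability of volume, $\frac{d}{dt}\vol(L'+tK)=n\langle L'^{n-1}\rangle\cdot K$. Once this holds, the regularization of $\hh^{\NA}$ follows. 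This gives $\mab^{\NA}(\phi^{\NA})\ge\delta\jj^{\NA}_G(\phi^{\NA})>0$, a contradiction. Hence $\mab$ is $G$-coercive, and Chen--Cheng gives the cscK metric.

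The second statement is the special case $G$ trivial. Uniqueness follows from Berman--Berndtsson.
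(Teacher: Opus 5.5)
Your overall architecture matches the paper's. $(i)\Rightarrow(ii)$ is known. $(ii)\Rightarrow(i)$ goes through Li's theorem that $\Aut^\circ(X,L)$-uniform $K$-stability gives a cscK metric once the Boucksom--Jonsson regularization conjecture holds, and that conjecture reduces to special Fujita approximations of $\G$-equivariant big snc models $(\cX,\cL)$ over $X\times\P^1$.

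The gap is that the step you correctly single out as the hardest is not carried out, and the mechanisms you propose for it would not work. You cannot choose the resolution so that discrepancies stay bounded. For the moving part $L_m$ of $|mL|$ to be a line bundle, $Y_m$ must dominate the normalized blow-up $Z_m$ of the base ideal $\fa_m$ of $|mL|$. So all Rees valuations of $\fa_m$ are forced on you, and their discrepancies are not bounded as $m$ grows. BFJ differentiability does not suffice by itself either. It yields $L_m^{n-1}\cdot p_m^*D\to\langle L^{n-1}\rangle\cdot D$ only for a \emph{fixed} divisor $D$ on a fixed model; this is exactly Lemma~\ref{lem:Special_H}. The components of $K_{Y_m/X}$, by contrast, live on ever higher models with varying coefficients.

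What is missing is a bound of the relevant part of $K_{Y_m/X}$ by a fixed divisor on $X$ that is orthogonal to $\langle L^{n-1}\rangle$, with a constant independent of $m$. The paper obtains it in four steps.
\begin{itemize}
\item[(a)] Since $L_m$ is pulled back from $Z_m$, the projection formula gives $L_m^{n-1}\cdot K_{Y_m/Z_m}=0$. So any log resolution works, and only Rees valuations matter.
\item[(b)] For a Rees valuation $\ord_F$ of an ideal $\fa$ one has $\ord_F K_{Z/X}\le n(\ord_F\fa-\ord_F\cJ(\fa))$. This is Proposition~\ref{thm:Discrepancies_Bouck}, proved via Jacobian ideals, or via Nadel vanishing in the appendix.
\item[(c)] First reduce to $\mathbf{B}_+(L)=\mathrm{Supp}\,E$ with $L-E$ ample. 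Then Nadel vanishing and Castelnuovo--Mumford regularity make $\cO_X((m+m_0)L-m_0E)\otimes\cJ(\|mL\|)$ globally generated for a fixed $m_0$. Subadditivity then gives $\cJ(\|mL\|)\cdot\cO_X(-2m_0E)\subset\fa_m$, whence $L_m^{n-1}\cdot K_{Y_m/X}\le 2nm_0\,L_m^{n-1}\cdot p_m^*E$.
\item[(d)] Only at this point does your differentiability argument apply. It gives $\limsup_m L_m^{n-1}\cdot K_{Y_m/X}\le 2nm_0\langle L^{n-1}\rangle\cdot E=0$ by orthogonality (Theorem~\ref{thm:Orthogonality}).
\end{itemize}

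Two smaller points also need attention.
\begin{itemize}
\item The moving part of $|mL|$ is only semiample, not ample. A perturbation such as $aL_m+(1-a)(p_m^*A-F_m)$ is needed to obtain \emph{ample} test configurations without spoiling the $K$-term.
\item For the application to test configurations, you must check that the ideals are cosupported in $\cX_0$ (after replacing $\cL$ by $\cL+c\cX_0$). You must also check that all constructions can be made $\G$- and $\C^*$-equivariant.
\end{itemize}
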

Some comments are in order. We first recall that $\Aut^\circ(X,L)$ indicates the identity component of the automorphism group of $(X,L)$. Modulo the fiberwise action, $\Aut(X,L)$ reduces to the \emph{linear automorphism group} $\mathrm{LAut}(X)$~\cite{Fuj78}. In the second part of Theorem~\ref{thmA} the triviality of $\Aut^\circ(X,L)$ is intended as $\mathrm{LAut}(X)=\{\mathrm{Id}\}$. Following~\cite{Tian97, Don02}, $K$-stability then requires that for any (normal) \emph{ample test configuration} $(\cX,\cL)$ the \emph{Donaldson-Futaki invariant} $\DF(\cX,\cL)$ is non-negative and equal to zero exactly when $(\cX,\cL)=(X\times \P^1,p_1^*L+c\cX_0)$ for $c\in\Q$, i.e. when $(\cX,\cL)$ is trivial~\cite{Sto11, Oda15, BHJ17}. Instead, the \emph{uniform} $K$-stability holds if $\DF(\cX,\cL)\geq \delta \jj^\NA(\cX,\cL)$ for a uniform constant $\delta$, where $\jj^\NA(\cX,\cL)$ is a \emph{minimum norm} that measures the triviality of ample test configurations~\cite{Sze15, Der16, BHJ17}. As any non-trivial cocharacter $\C^*\to \Aut(X,L)$ defines a non-trivial product test configuration with a vanishing Donaldson-Futaki invariant (which goes back to the classical Futaki invariant~\cite{Fut83} of the associated action), in the general case with automorphisms the notion of $K$-stability and its uniform version must be refined. In this context the $\Aut^\circ(X,L)$-uniform $K$-stability of Theorem~\ref{thmA} represents the weakest uniform notion. It only deals with $\Aut^\circ(X,L)$\emph{-equivariant} ample test configurations and asks that their Donaldson-Futaki invariants dominate a uniform multiple of an equivariant version of the $\jj^\NA$-invariant that considers all the possible twists induced by cocharacters $\C^*\to \Aut(X,L)$~\cite{His16b, His19, Li22, Li22b}. Observe that by definition $\Aut^\circ(X,L)$-uniform $K$-stability requires $\Aut^\circ(X,L)$ to be reductive, which is a well-known necessary condition for the existence of cscK metrics~\cite{Lic57, Mat57}. It is worth stressing that Theorem~\ref{thmA} solves the version of the Yau-Tian-Donaldson Conjecture given in~\cite[Conj.~2.26]{Li22b}, but Theorem~\ref{thmA} can also be stated for a uniform version of $K$\emph{-polystability}, as explained below in Corollary~\ref{corB}. 
\smallskip

Let us now discuss previous results regarding Theorem~\ref{thmA}. The implications $(i)\Rightarrow (ii)$ are known and are derived from the combination of many works, as we are going to briefly describe. Following previous works~\cite{Mab87, Tian00, BB17, BBEGZ19,  Dar15, DarRub17, BDL17, BDL20}, in~\cite{CC21I,CC21II} the existence of cscK metrics has been characterized in terms of the \emph{$\jj$-coercivity} of the \emph{Mabuchi functional} in a certain metric completion $\mathcal{E}^1$ of the space of K\"ahler metrics representing $c_1(L)$. Then the Donaldson-Futaki invariant and the (equivariant) $\jj^\NA$-functional of an (equivariant) ample test configuration $(\cX,\cL)$ can be essentially and respectively obtained as the slope of the (equivariant) Mabuchi functional and of the (equivariant) $\jj$-functional along a \emph{geodesic ray} in $\mathcal{E}^1$~\cite{PS06, PS07, RWN14} associated to $(\cX,\cL)$~\cite{BHJ19}.

Note that the existence of cscK metrics in $c_1(L)$ implies the $K$-polystability of $(X,L)$~\cite{Sto09, Mab08}. However, the converse does not seem to be expected as a consequence of~\cite{ACGTF08, Hat21}. In this perspective, the uniform version of Theorem~\ref{thmA} is suspected to be optimal.

Let us also recall that Theorem~\ref{thmA} is known for smooth \emph{spherical} polarized varieties. In the toric case it follows from combining~\cite{His16b} and~\cite{CC21I, CC21II}. In the general spherical case any \emph{big} test configuration is a Mori dream space (see the appendix of~\cite{Del23}), thus $\Aut^\circ(X,L)$-uniform $K$-stability is equivalent to the analogous notions for \emph{models} and~\cite{Li22b} gives the existence of cscK metrics.

The picture is clearer when $L$ is proportional to the canonical bundle, and Theorem~\ref{thmA} is already known in such cases. Indeed, if $L=K_X$, then there are no non-trivial vector fields, a cscK metric in $c_1(K_X)$ is a K\"ahler-Einstein metric, and it always exists, as proved in~\cite{Yau78, Aub78}. Moreover, $K$-stability holds~\cite{OS15}. In the Fano case $L=-K_X$, cscK metrics in $c_1(X)$ are positively curved K\"ahler-Einstein metrics. In this setting the Yau-Tian-Donaldson Conjecture with respect to $K$-polystability has been established in~\cite{CDS15I, CDS15II, CDS15III, Tian15} (see also~\cite{DS16, Sze16, CSW18, Zhang21}). Furthermore the aforementioned implication from the existence of K\"ahler-Einstein metrics to $K$-polystability has been also extended in~\cite{Berm16} to singular Fano varieties. Then the uniform version for Fano (not necessarily smooth) varieties of Theorem~\ref{thmA} has been shown in~\cite{Li22}, relying on previous works~\cite{BBJ15, LTW17}.

Let us also underline that the $K$-stability notion has turned out to be extremely precious in Algebraic Geometry in the last decades; for instance, it has been tremendously advantageous in the construction of moduli spaces of Fano varieties. This aspect is beyond our purposes, and we invite interested readers to look at~\cite{Xu25} and the references therein.

\subsection*{Special Fujita Approximations}
As better explained below, Theorem~\ref{thmA} will be a consequence of the following main result regarding \emph{Fujita Approximations}. 

\begin{mainthm}[{Existence of Special Fujita Approximations}]\label{thmB}
    Let $L\to X$ be a big line bundle on a $n$-dimensional smooth projective variety. Then there are birational morphisms\footnote{The morphisms $p_m$ can be chosen to be given by a sequence of blowups along smooth centers.} $p_m:Y_m\to X$, ample $\Q$-line bundles $L_m$ and effective $\Q$-divisors $E_m$ such that
    \begin{itemize}
        \item[i)] $p_m^*L=L_m+E_m$ for any $m\in\N$;
        \item[ii)] $L_m^n\longrightarrow \Vol(L)$ as $m\to +\infty$;
        \item[iii)] $L_m^{n-1}\cdot K_{Y_m}\longrightarrow \langle L^{n-1}\rangle\cdot K_X$ as $m\to +\infty$. 
    \end{itemize}
    Moreover, if $L$ is $\G$-linearized with respect to a reductive Lie group $\G$ then the morphisms $p_m: Y_m\to X$ can be chosen to be $\G$-equivariant, the ample $\Q$-line bundles $L_m$ to be $\G$-linearized\footnote{We mean that there exists $r\in \N$ such that $rL_m$ is $\G$-linearized as ample line bundle.} and the effective $\Q$-divisors $E_m$ to be $\G$-invariant.
\end{mainthm}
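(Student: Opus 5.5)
We build the approximations from the ideal sheaves of the linear systems $|mL|$. Write $\fb_m\subset\cO_X$ for the base ideal of $|mL|$, or better for the multiplier ideal $\cJ(\|mL\|)$. Nadel vanishing gives it good global generation properties. Let $p_m:Y_m\to X$ be a $\G$-equivariant log resolution of $\fb_m$, obtained by equivariant blowups along smooth centres. Write $p_m^{-1}\fb_m=\cO_{Y_m}(-F_m)$ and set $M_m:=p_m^*(mL)-F_m$, which is base point free and hence nef. The standard Fujita approximation is $\frac1m M_m$. Since $M_m$ is only nef and big, we perturb it to an ample class $L_m:=\frac1m(M_m-\epsilon_m A_m)$, where $A_m$ is an effective $p_m$-exceptional, $\G$-invariant divisor that is $p_m$-antiample. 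We choose $\epsilon_m\to0$ fast enough that the intersection numbers are unaffected in the limit. We then put $E_m:=\frac1m(F_m+\epsilon_m A_m)$, which gives (i) and the equivariance statements, since $\fb_m$ is $\G$-invariant and equivariant resolutions exist for reductive $\G$. Item (ii) is Fujita's theorem: $M_m^n/m^n\to\Vol(L)$ by the Demailly–Ein–Lazarsfeld argument, namely $\Vol(M_m)\ge h^0(mL)$ up to lower-order terms, together with the comparison of multiplier and base ideals.

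The whole difficulty is (iii). We split $L_m^{n-1}\cdot K_{Y_m}=L_m^{n-1}\cdot p_m^*K_X+L_m^{n-1}\cdot K_{Y_m/X}$. For the first term, the positive intersection product $\langle L^{n-1}\rangle$ is the limit of $p_{m*}(L_m^{n-1})$ over Fujita approximations (Boucksom–Favre–Jonsson). Hence $L_m^{n-1}\cdot p_m^*K_X\to\langle L^{n-1}\rangle\cdot K_X$, after writing $K_X$ as a difference of ample classes and using continuity on each piece. So everything reduces to showing
\[
\lim_{m\to\infty} L_m^{n-1}\cdot K_{Y_m/X}=0 .
\]
Since $K_{Y_m/X}\ge0$ and $L_m$ is nef, this quantity is nonnegative. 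It is therefore enough to find models on which it is small, and this is exactly the reason to call the approximation \emph{special}: a general Fujita approximation will not satisfy it.

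For the key step we use the multiplier-ideal structure. Take $\fa_m=\cJ(\|mL\|)$, or a variant with a fixed ample twist $\cJ(\|mL\|)\otimes\cO(H)$ so that it is globally generated. Write $\fa_m\cdot\cO_{Y_m}=\cO(-F_m)$. By construction of multiplier ideals, $\ord_E(F_m)\le m\,\ord_E\|L\|+A_X(E)$ for every exceptional prime $E$, while $A_X(E)=\ord_E K_{Y_m/X}+1$. The aim is to bound $\frac1m M_m^{n-1}\cdot E$ by something of order $O(1/m)$ times data controlled by $\frac1m F_m$, for $E$ ranging over components of $K_{Y_m/X}$. Concretely, I would compare $M_m$ on $Y_m$ with $M_{km}$ pulled back from a common higher model. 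Subadditivity $\cJ(\|kmL\|)\subset\cJ(\|mL\|)^k$ gives $\frac1m F_m\le\frac1{km}F_{km}+O(\frac1m)K_{Y/X}$. The discrepancy contribution therefore appears with a $1/m$ factor. The estimate needed is
\[
L_m^{n-1}\cdot K_{Y_m/X}\le \frac{C}{m}\,(L_m+K_{Y_m/X})^{n-1}\cdot K_{Y_m/X},
\]
and the right-hand side must then be shown not to blow up. That last point is delicate, because the exceptional divisors proliferate as $m$ grows.

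I expect this uniform control to be the main obstacle: one needs a bound on mixed intersections of exceptional divisors that is independent of the resolution. The route I would try first is to choose the resolutions $p_m$ minimal or functorial, so that the centres of blowup lie over the non-nef locus of $L$. Then, by induction on the blowup sequence, each new exceptional divisor $E$ contributes $L_m^{n-1}\cdot E\lesssim \vol_{X|E}$-type restricted volumes of $\frac1m F_m$. These tend to zero because restricted volumes along the augmented base locus vanish (Ein–Lazarsfeld–Mustaţă–Nakamaye–Popa).
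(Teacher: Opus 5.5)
\textbf{There is a genuine gap: you never prove $\lim_m L_m^{n-1}\cdot K_{Y_m/X}=0$.} Your reduction of (iii) to this limit is correct; it is Lemma~\ref{lem:Special_H} in the paper. The mechanism you sketch cannot deliver the limit.

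The inequality $L_m^{n-1}\cdot K_{Y_m/X}\le \frac{C}{m}(L_m+K_{Y_m/X})^{n-1}\cdot K_{Y_m/X}$ is not derived from anything. The restricted-volume heuristic misses the real difficulty. The quantity $L_m^{n-1}\cdot K_{Y_m/X}=\sum_F \ord_F(K_{Y_m/X})\,L_m^{n-1}\cdot F$ runs over a family of divisors that changes with $m$, weighted by discrepancies that are a priori unbounded. So even knowing $L_m^{n-1}\cdot F\to 0$ for each fixed divisor $F$ over $X$ gives nothing.

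What is missing is a discrepancy bound, uniform in $m$, in terms of a \emph{fixed} divisor orthogonal to $\langle L^{n-1}\rangle$. The paper gets it in three steps.
\begin{itemize}
\item[(a)] Build the approximation from the base ideals $\fa_m=\fb(|mL|)$. Then $L_m$ is pulled back from the normalized blowup $Z_m$ of $\fa_m$, so only the Rees divisors of $\fa_m$ contribute. The proliferation of exceptional divisors in the log resolution is therefore irrelevant.
\item[(b)] For these Rees divisors, Proposition~\ref{thm:Discrepancies_Bouck} gives $\ord_F K_{Z_m/X}\le n(\ord_F\fa_m-\ord_F\cJ(\fa_m))$.
\item[(c)] Pass to a model where $L=A+E$ with $A$ ample and $\mathrm{Supp}\,E=\mathbf{B}_+(L)$. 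Uniform global generation of $\cO_X(mL+m_0A)\otimes\cJ(\|mL\|)$ and subadditivity give $\cJ(\|mL\|)\cdot\cO_X(-2m_0E)\subset\fa_m$. Hence $L_m^{n-1}\cdot K_{Y_m/X}\le 2nm_0\,L_m^{n-1}\cdot p_m^*E\to 2nm_0\langle L^{n-1}\rangle\cdot E=0$ by orthogonality.
\end{itemize}
There is no $1/m$ gain anywhere. The discrepancies are controlled by the unnormalized difference $\ord_F\fa_m-\ord_F\cJ(\fa_m)\le 2m_0\ord_F E$. Smallness comes from orthogonality along $\mathbf{B}_+(L)$, not from dividing by $m$.

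\textbf{Two further steps need repair.}
\begin{itemize}
\item With $\fb_m=\cJ(\|mL\|)$, the class $p_m^*(mL)-F_m$ is in general neither base point free nor nef. Twisting by an ample $H$ destroys the decomposition $p_m^*L=L_m+E_m$ with $E_m\ge 0$. Construct the approximation from base ideals and use multiplier ideals only for comparison.
\item $\frac1m(M_m-\epsilon_mA_m)$ with $A_m$ merely $p_m$-antiample is not ample in general. $M_m$ is only semiample and can vanish on curves not contracted by $p_m$, e.g.\ when $L$ is nef and big with $L\cdot C=0$ for some curve $C$. You need a global ample piece. Write $L=A+E$ by Kodaira's lemma and use $aL_m+(1-a)(p_m^*A-F_m)$, where $p_m^*A-F_m$ is ample. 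Choose $a=a_m$ close to $1$ so that both $L_m^n$ and $L_m^{n-1}\cdot K_{Y_m/X}$ move by at most $1/m$.
\end{itemize}
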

We have denoted by $\langle L^{n-1}\rangle$ the \emph{positive intersection product}~\cite{BDPP13, BFJ09, BouThesis}.
Given a big line bundle on a smooth projective variety $X$, it is always possible to find $p_m:Y_m\to X$, $L_m$ and $E_m$ as in Theorem~\ref{thmB} such that $(i)$ and $(ii)$ hold: this is the statement of classical Fujita Approximations~\cite{Fuj94}, although Fujita originally only required the $\Q$-line bundles $L_m$ to be semiample. The key novelty of Theorem~\ref{thmB} is represented by the convergence of the \emph{first Riemann-Roch coefficients} (by an abuse of nomenclature) appearing in $(iii)$. Any Fujita Approximation satisfying $(iii)$ will be called \emph{Special}, and Theorem~\ref{thmB} shows their existence as conjectured in~\cite[Conj.~4.7]{Li21}. Interestingly, it is worth underlining that the Semiample Fujita Approximation given by blowing-up the base ideals of $L\to X$, i.e. the most canonical one, will be proved to be Special within the proof of Theorem~\ref{thmB}.

Observe that since the pullback of big line bundles is still a big line bundle, a version of Theorem~\ref{thmB} for singular projective varieties immediately follows from Hironaka's resolution of singularities.

Fujita Approximations have been extremely useful in tackling key questions in modern Algebraic Geometry. Similarly, the refinement of Theorem~\ref{thmB} might be a strong ally in future works as well.

\subsection*{How Theorem~\ref{thmB} implies Theorem~\ref{thmA} via a solution to the Boucksom-Jonsson Regularization Conjecture}

Theorem~\ref{thmB} solves~\cite[Conj.~4.4,~4.7]{Li21}. In particular, as explained in~\cite[Lem.~4.5]{Li21}, it also provides a solution to the \emph{Boucksom-Jonsson Regularization Conjecture}~\cite[Conj.~2.5]{BJ18} and to the Yau-Tian-Donaldson Conjecture of Theorem~\ref{thmA} (cf.~\cite[Prop.~6.7]{Li22b}). More precisely, following Boucksom-Jonsson's Non-Archimedean approach to $K$-stability~\cite{BJ22, BJ18, BJ25I, BJ23II}, in~\cite{Li22b} Li proved that a strengthened notion of $\Aut^\circ(X,L)$-uniform $K$-stability implies the equivariant coercivity of the Mabuchi functional, and hence the existence of cscK metrics in $c_1(L)$ by~\cite{CC21I, CC21II}. Such a stronger notion is called $\Aut^\circ(X,L)$-uniform $K$-stability \emph{for models} and, roughly speaking, it also considers \emph{big} test configurations (see also~\cite{Li21, DR26}). In particular, a strategy for proving Theorem~\ref{thmA} consists of checking that $\Aut^\circ(X,L)$-uniform $K$-stability implies the analogous stability for models.
As observed in~\cite{Li22b} (see also~\cite[Lem.~4.5]{Li21}), such an implication would hold if one solved the so-called Boucksom-Jonsson Regularization Conjecture. To explain the latter, we recall that the set of ample test configurations $\mathcal{H}^\NA$, interpreted as positive Non-Archimedean metrics, can be endowed with a natural Darvas metric whose metric completion $\mathcal{E}^{1,\NA}$ admits a meaningful description in terms of \emph{maximal geodesic rays}~\cite{DL20, BBJ15, Reb21}.
This mimics the Archimedean picture of the Mabuchi space $\mathcal{H}$ of K\"ahler metrics in $c_1(L)$  (see~\cite{Mab87, Sem92, Don99}) and its metric completion $\mathcal{E}^1$ with respect to the Darvas metric~\cite{Dar15}.
The Boucksom-Jonsson Regularization Conjecture then requires that the \emph{Non-Archimedean entropy} $\hh^\NA(\phi)$ of an element $\phi\in \mathcal{E}^{1,\NA}$ can be continuously approximated by the Non-Archimedean entropies $\hh^\NA(\phi_m)$ of a sequence $\{\phi_m\}_{m\in \N}\subset \mathcal{H}^\NA$ such that $\phi_m\to \phi$.
The Archimedean analogue of this regularizing process has been proved in~\cite[Lem.~3.1]{BDL17}. Equivariant notions of these settings are also known. As a consequence of~\cite[Prop.~6.3]{Li22b}, the desired regularization of the Non-Archimedean entropy can be reduced to the case when $\phi\in\mathcal{E}^{1,\NA}$ is associated to a \emph{model}, i.e. to a big test configuration $(\cX,\cL)$ that is dominating and simple normal crossing\footnote{$(\cX,\cX_0^\mathrm{red})$ simple normal crossing and the morphism $\pi:\cX\to \P^1$ $\C^*$-equivariantly factorizes as $\cX\overset{\pi}{\to} X\times \P^1\overset{p_2}{\to} \P^1$.}. For big models $\hh^\NA(\cX,\cL)=\langle\cL^n \rangle\cdot K_{\cX/X\times \P^1}$, up to base change, i.e. it can be essentially expressed in terms of the first Riemann-Roch coefficient of the big line bundle $\cL\to \cX$~\cite[Prop.~3.2]{Li21}.
Thus Theorem~\ref{thmB} provides the right framework to solve the Boucksom-Jonsson Regularization Conjecture and hence the Yau-Tian-Donaldson Conjecture of Theorem~\ref{thmA}.

\begin{maincor}[{Solution to the Boucksom-Jonsson Regularization Conjecture}]\label{corA}
    Let $\G\subset \Aut^\circ(X,L)$ be a reductive Lie group and let $(\cX,\cL)$ be a $\G$-equivariant model of $(X,L)$ that is dominating and simple normal crossing. Then there exists a sequence of $\G$-equivariant smooth ample test configurations $(\cX_m,\cL_m)$ dominating $(\cX,\cL)$ such that
    $$
    \hh^\NA(\cX_m,\cL_m)\longrightarrow \hh^\NA(\cX,\cL)
    $$
    as $m\to +\infty$.
    In particular the equivariant version of Boucksom-Jonsson Regularization Conjecture~\cite[Conj.~2.5]{BJ18} holds.
\end{maincor}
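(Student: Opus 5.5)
We deduce Corollary~\ref{corA} from Theorem~\ref{thmB}, applied equivariantly to the big line bundle $\cL$ on the smooth total space $\cX$. The group is $\G\times\C^*$, which is reductive since $\G$ is.

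\textbf{Step 1: reduce the entropy to a Riemann-Roch coefficient.} Up to base change, \cite[Prop.~3.2]{Li21} gives
$$\hh^\NA(\cX,\cL)=\langle\cL^n\rangle\cdot K_{\cX/X\times\P^1}=\langle\cL^n\rangle\cdot K_{\cX}-\langle\cL^n\rangle\cdot \rho^*K_{X\times\P^1},$$
where $\rho:\cX\to X\times\P^1$ is the dominating morphism. Here $\dim\cX=n+1$, so $\langle \cL^n\rangle\cdot K_\cX$ is exactly the first Riemann-Roch coefficient that appears in Theorem~\ref{thmB}(iii).

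\textbf{Step 2: apply Theorem~\ref{thmB}.} We obtain $\G\times\C^*$-equivariant blowups $p_m:\cY_m\to\cX$ along smooth centers, together with ample $\Q$-line bundles $\cL_m$ and effective invariant $\Q$-divisors $E_m$ such that $p_m^*\cL=\cL_m+E_m$. Theorem~\ref{thmB} also gives $\cL_m^{n+1}\to\Vol(\cL)$ and $\cL_m^n\cdot K_{\cY_m}\to\langle\cL^n\rangle\cdot K_\cX$.

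We then check that $(\cY_m,\cL_m)$ is a smooth ample test configuration dominating $(\cX,\cL)$. The $\C^*$-action and the map to $\P^1$ come from composing with $\cX\to\P^1$. The space $\cY_m$ is smooth, and it is a model of $X$ away from the central fibre provided the blowup centers lie over $0$. This last point should follow from $\C^*$-invariance together with the fact that $\cL$ is relatively ample (base-point free) away from $\cX_0$, so the base ideals are cosupported in $\cX_0$. If the centers do not lie over $0$ automatically, we instead apply the construction to $\cL+c\,\cX_0$ for $c\gg1$.

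Since $\cL_m\le p_m^*\cL$ with difference supported on $\cX_0$, the corresponding non-Archimedean metrics satisfy $\phi_m\le\phi$. The volume convergence $\cL_m^{n+1}\to\Vol(\cL)$ means that the Monge--Ampère energies converge. Together these give $\phi_m\to\phi$ in the Darvas metric of $\mathcal{E}^{1,\NA}$, by the standard criterion that for a decreasing-type comparison $d_1$ equals the difference of energies.

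\textbf{Step 3: convergence of the two intersection terms.} The ample configurations satisfy
$$\hh^\NA(\cY_m,\cL_m)=\cL_m^n\cdot K_{\cY_m/X\times\P^1}=\cL_m^n\cdot K_{\cY_m}-\cL_m^n\cdot (\rho p_m)^*K_{X\times\P^1}.$$
The first term converges by Step~2. For the second term, note that $\cL_m^n\cdot p_m^*D\to\langle\cL^n\rangle\cdot D$ for any fixed divisor $D$ on $\cX$; this follows from the differentiability of the volume and the convergence of the movable intersections of Fujita approximations, as in \cite{BFJ09}. Writing $D=\rho^*K_{X\times\P^1}$ as a difference of ample divisors and using multilinearity handles this term. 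Base change commutes with these constructions after normalization, but we must verify that smoothness is preserved; if it is not, we take an equivariant resolution and note that both sides are birational invariants.

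The main obstacle is Step~2's compatibility: ensuring that the special Fujita approximation of $\cL$ on the $(n+1)$-dimensional $\cX$ yields honest test configurations. That requires equivariance, centers in the central fibre, and the correct relationship between the normalizations $\cL_m^{n+1}$, $\Vol(\cL)$ and the energy. We also need that these approximations converge in $\mathcal{E}^{1,\NA}$, so that \cite[Prop.~6.3]{Li22b} upgrades the model case to the full equivariant Boucksom--Jonsson conjecture.
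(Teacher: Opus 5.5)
Your route is the paper's: apply Theorem~\ref{thmB} $\G\times\C^*$-equivariantly to $\cL$ on the $(n+1)$-dimensional $\cX$, look inside its proof to see that the approximants are test configurations, then deduce entropy convergence. The paper stops at Conjecture~\ref{conj:RC} and quotes \cite[Lem.~4.5]{Li21}, \cite[Prop.~6.3]{Li22b}; your argument via $\phi_m\le\phi$ and convergence of energies is a reasonable substitute.

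However, your verification that the blow-up centers lie over $0$ is not correct as written. The proof of Theorem~\ref{thmB} does more than blow up the base ideals of $|m\cL|$. It first passes to a model on which $\mathbf{B}_+(\cL)=\mathrm{Supp}(E)$ with $\cL-E$ ample, and its Step~1 subtracts $(1-a)p_m^*E+(1-a)F_m$. So what has to be vertical is $\mathbf{B}_+(\cL)$. The paper argues exactly with $V(\mathcal{I}_m)\subset\mathbf{B}_+(\cL)=\mathbf{B}(m\cL-\cA)$, where $\cA$ is ample with exceptional part in $\cX_0$. Relative base-point-freeness of $\cL$ away from $\cX_0$ gives no control on $\mathbf{B}_+(\cL)$, and without twisting the claim genuinely fails. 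Take a smooth divisor $Z\subset X$ with $L-aZ$ ample, $\rho\colon\cX=\mathrm{Bl}_{Z\times\{0\}}(X\times\P^1)\to X\times\P^1$, and $\cL=\rho^*p_1^*L+a\tilde X_0$, where $\tilde X_0$ is the strict transform of $X\times\{0\}$. Then $\cL$ is nef and big. But the strict transform of $Z\times\P^1$ is disjoint from $\tilde X_0$, so $\cL$ restricts to $p_1^*(L|_Z)$ on it, and this horizontal divisor lies in $\mathbf{B}_+(\cL)$. Consequently $E$ is not vertical, and the perturbed bundles of Step~1 do not restrict to $L$ on the generic fibre.

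So the twist $\cL\mapsto\cL+c\cX_0$ is mandatory, not a fallback. You must show $\mathbf{B}_+(\cL+c\cX_0)\subset\cX_0$ for $c\gg1$. For example, $\cL+c\cX_0$ is $\Q$-linearly equivalent to $\rho^*(p_1^*L+c'\,X\times\{\infty\})$ plus an effective vertical divisor, and the first term is big and nef with $\mathbf{B}_+\subset\Exc(\rho)\subset\cX_0$. You also need translation invariance of $\hh^\NA$. Your inequality $\phi_m\le\phi$ relies on this same verticality.

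Second, in Step~3 the identity $\hh^\NA(\cY_m,\cL_m)=\cL_m^n\cdot K_{\cY_m/X\times\P^1}$ requires $\cY_{m,0}$ to be reduced. This typically fails, since blowing up a stratum of $\cX_0$ creates components of multiplicity $\ge2$. Your proposed patch does not work either: a base change making $\cY_{m,0}$ reduced depends on $m$, and Theorem~\ref{thmB} does not commute with normalized base change, which is moreover singular.

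Work instead with $K^{\log}_{\cY/X\times\P^1}:=K_{\cY/X\times\P^1}+\cY_{0,\redu}-\cY_0$. Up to the factor $L^n$, this computes $\hh^\NA$ both for ample test configurations and for big models, without any base change. Then $K^{\log}_{\cY_m/X\times\P^1}=p_m^*K^{\log}_{\cX/X\times\P^1}+K^{\log}_{\cY_m/\cX}$, where $K^{\log}_{\cY_m/\cX}:=K_{\cY_m/\cX}+\cY_{m,0,\redu}-p_m^*\cX_{0,\redu}$. Moreover $0\le K^{\log}_{\cY_m/\cX}\le K_{\cY_m/\cX}$, because $(\cX,\cX_{0,\redu})$ is log smooth and $\cY_{m,0,\redu}\le p_m^*\cX_{0,\redu}$. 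Lemma~\ref{lem:Special_H} then gives $\cL_m^n\cdot K^{\log}_{\cY_m/\cX}\to0$. Your convergence $\cL_m^n\cdot p_m^*D\to\langle\cL^n\rangle\cdot D$ handles the other term. This is the same mechanism as the estimate $A_\cX(v_E)\le v_E(K_{\cX_m/\cX})$ in the Appendix.
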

As mentioned before, in the proof of Theorem~\ref{thmB} we proved that blowing-up base ideals produces Special Semiample Fujita Approximations. This provides a solution to~\cite[Conj.]{BJ23II}, i.e. to a more precise version of the Boucksom-Jonsson Regularization Conjecture.

In the Appendix by S. Boucksom, this entropy approximation conjecture has moreover been established over the field of Laurent series $k\lau{t}$ using the Non-Archimedean formalism and the key Proposition \ref{thm:Discrepancies_Bouck} of this paper.
\smallskip

As explained in~\cite{Li22b}, Corollary~\ref{corA} also implies~\cite[Conj.~1.6]{Li22b}, proving that the slope at infinity of the Archimedean entropy along a maximal geodesic ray coincides with the Non-Archimedean entropy of the associated element in $\mathcal{E}^{1,\NA}$ (this last result has also been recently obtained in~\cite[Thm.~C]{BJ26} and independently in~\cite{DZ26}).

\subsection*{Outline of the proof of Theorem~\ref{thmB}}
By a perturbation argument, we show that it is enough to prove a version of Theorem~\ref{thmB} where the $\Q$-line bundles $L_m$ are only assumed to be \emph{semiample}.

Thus, since the point $(iii)$ holds if and only if $L_m^{n-1}\cdot K_{Y_m/X}\to 0$ (Lemma~\ref{lem:Special_H}, see also~\cite[Lem.~4.10]{Li21}), the problem of \emph{bounding the discrepancies} of the morphisms $p_m$ naturally arises.
Already in~\cite[Lem.~4.10]{Li21} it was observed that if $K_{Y_m/X}\leq C E_m$ for a uniform constant $C>0$, then $L_m^{n-1}\cdot K_{Y_m/X}\to 0$ thanks to the orthogonality estimate of~\cite[Thm.~4.1]{BDPP13}. Note that in~\cite{Li21} a deep study of the geometry in Nakayama's examples of big line bundles that do not admit a birational Zariski decomposition~\cite{Nak04} was performed to get such a bound $K_{Y_m/X}\leq CE_m$.

The key step in our proof is given by Proposition~\ref{thm:Discrepancies_Bouck}, in which we establish the upper bound
\begin{equation}
    \label{eqn:IntroThmDiscre}
    \ord_F K_{Z/X}\leq n\left(\ord_F \mathfrak{a}-\ord_F \mathcal{J}(\mathfrak{a})\right)
\end{equation}
for $F\subset Z$ prime divisor, where $p:Z\to X$ is the normalized blow-up of a coherent ideal sheaf $\mathfrak{a}$. Here $\mathcal{J}(\mathfrak{a})$ denotes the multiplier ideal sheaf associated. Proposition~\ref{thm:Discrepancies_Bouck} is of independent interest and it is obtained by exploiting the properties of the \emph{Jacobian ideals of $\mathfrak{a}$} introduced in~\cite{ELSV04}; indeed, a finer estimate in terms of these ideals can also be deduced. An algebraic proof of Proposition \ref{thm:Discrepancies_Bouck}, actually valid over any algebraically closed field of characteristic $0$, due to S. Boucksom can be found in Proposition \ref{prop:key}.

Considering the base ideals $\mathfrak{a}_m:=\mathfrak{b}\left(|mL|\right)$ and taking common log resolutions of $\mathfrak{a}_m$ and $\mathcal{J}(\mathfrak{a}_m)$,~\eqref{eqn:IntroThmDiscre} leads to
\begin{equation*}
    L_m^{n-1}\cdot K_{Y_m/X}\leq n m L_m^{n-1}\cdot\left(E_m-\tilde{E}_m\right)
\end{equation*}
where $m\tilde{E}_m$ is the effective divisor such that $\mathcal{O}_{Y_m}(-m\tilde{E}_m)=p_m^{-1}\mathcal{J}(\mathfrak{a}_m)\cdot \mathcal{O}_{Y_m}$. Comparing with the previously mentioned sufficient condition described in~\cite{Li21}, we replaced $E_m$ with the difference $E_m-\tilde{E}_m$, where $\tilde{E}_m$ are \emph{closely related} to $E_m$, but we have an extra multiplicative constant $m$.

Next, we exploit classical results on the graded sequence of base ideals and on their associated asymptotic multiplier ideal sheaves to show the existence of a coherent ideal sheaf $\mathfrak{c}$ such that
\begin{equation*}
    \mathcal{J}(\mathfrak{a}_m)\cdot \mathfrak{c} \subset \mathfrak{a}_m
\end{equation*}
for any $m\in\N$. More precisely, reducing to the case when $L=A+E$ for $A$ ample $\Q$-line bundle and $E$ effective $\Q$-divisor such that $\mathbf{B}_+(L)=\mathrm{Supp}(E)$, we show that the ideal sheaf $\mathfrak{c}$ can be taken of the type $\mathcal{O}_X(-m_0E)$ for a constant $m_0\in\N$. Hence
$$
L_m^{n-1}\cdot K_{Y_m/X}\leq n m_0 L_m^{n-1}\cdot p_m^*E,
$$
and the orthogonality property of~\cite[Thm.~4.15,~Thm.~B]{BFJ09},~\cite[Thm.~C]{ELMNP09} yields the desired convergence $\lim_m L_m^{n-1}\cdot K_{Y_m/X}=0$.

\subsection*{The weighted relative setting.}
Recently Lahdili \cite{Lah19} (and independently Inoue~\cite{Ino22}) introduced a useful weighted relative formalism to study several special metrics such as Calabi's extremal metrics and K\"ahler-Ricci solitons; they are all \emph{weighted extremal} metrics. More precisely, for our purposes, let $L\to X$ be an ample line bundle, let $T\subset \Aut^\circ(X,L)$ be a maximal compact torus, let $m_\om:X\to \mathfrak{t}^\vee$ be a moment map for a $T$-invariant K\"ahler form $\om\in c_1(L)$, let $v,w\in C^\infty(\mathfrak{t}^\vee)$ and assume that $v,w$ are positive on the moment polytope $P=m_\om(X)$. We refer to~\cite{Lah19, AJL23, BJT26, PT26, DJL24, DJL25, HL25a} for the precise definitions and for a deep study of this enlarged setting. Here we recall that for fixed \emph{weights} $(v,w)$ there is a translation invariant weighted relative Mabuchi functional $M_{v,w}^{\mathrm{rel}}$ which is the Euler-Lagrange functional of the $(v,w)$-weighted extremal K\"ahler metric. For instance, the unweighted case $(v,w)=(1,1)$ returns to the study of Calabi's extremal K\"ahler metrics, and in particular to the search for cscK metrics when the so-called Futaki invariant vanishes.

In~\cite{AJL23} it has been established that the existence of a $(v,w)$-weighted extremal K\"ahler metric in $c_1(L)$ implies the $T_\C$-coercivity of the weighted relative Mabuchi functional $M_{v,w}^{\mathrm{rel}}$, and furthermore it yields the $T_\C$\emph{-uniform $(v,w)$-weighted relative $K$-stability} of $(X,L)$ (see also~\cite{Ssz11,Lah23, CLS14}). The latter stability notion is essentially analogous to the $T_\C$-uniform $K$-stability, with the Donaldson-Futaki invariant replaced by its $(v,w)$-weighted relative analogue.
In the other direction, recently Han-Liu in~\cite{HL25b} (see also~\cite{Has25}) proved that if $(X,L)$ is $T_\C$-uniformly $(v,w)$-weighted relative $K$-stable \emph{for models}\footnote{As $\T$ is maximal, such notion is equivalent to the \emph{uniform $T$-equivariant relative weighted $\widehat{K}$-polystability} recently considered in~\cite{BJ26}. Indeed, this follows from~\cite[Lem.~8.17]{BJ26} and the fact that the identity component of the \emph{centralizer} of $T\subset \Aut^\circ(X,L)$ is equal to $\T_\C$ when it is assumed to be reductive (as in the definition of uniform $T$-equivariant relative weighted $K$-polystability).} then $M_{v,w}^{\mathrm{rel}}$ is $\T_\C$-coercive. The latter, in turn, implies the existence of a $(v,w)$-weighted extremal K\"ahler metric in $c_1(L)$ when the weight $v$ is log-concave, thanks to~\cite{AJL23, He19, HL25a, DJL24, DJL25}.
\smallskip

We can apply the existence of Special Ample Fujita Approximations from Theorem~\ref{thmB} to this weighted relative setting to show that the $T_\C$-uniform $(v,w)$-weighted relative $K$-stability for models is equivalent to the original one, obtaining the following Yau-Tian-Donaldson correspondence.
\begin{mainthm}\label{thmC}
    Let $L\to X$ be an ample line bundle, let $T\subset \Aut^\circ(X,L)$ be a maximal compact torus, let $\om\in c_1(L)$ be a $T$-invariant K\"ahler form, let $m_\om:X\to \mathfrak{t}^\vee$ and let $v,w\in C^\infty(t^\vee)$ that are positive on the moment polytope $P=m_\om(X)$. Then the following are equivalent:
    \begin{itemize}
        \item[(i)] the $(v,w)$-weighted relative Mabuchi functional is $T_\C$-coercive;
        \item[(ii)] $(X,L)$ is $T_\C$-uniformly $(v,w)$-weighted relative $K$-stable.
    \end{itemize}
    In particular if $v$ is log-concave then the following are equivalent:
    \begin{itemize}
        \item[(i)] there exists a $(v,w)$-weighted extremal K\"ahler metric in $c_1(L)$;
        \item[(ii)] $(X,L)$ is $T_\C$-uniformly $(v,w)$-weighted relative $K$-stable.
    \end{itemize}
\end{mainthm}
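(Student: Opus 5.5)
The direction (i)$\Rightarrow$(ii) is already known: coercivity of $M_{v,w}^{\mathrm{rel}}$ modulo $T_\C$ passes to slopes along the geodesic rays of ample test configurations. By the weighted slope formulas of \cite{AJL23, Lah23}, these slopes are the weighted relative Donaldson--Futaki invariant and the $T_\C$-twisted $\jj^\NA$. So the substance is (ii)$\Rightarrow$(i), and the plan follows the unweighted strategy described above for Theorem~\ref{thmA}.

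By \cite{HL25b}, $T_\C$-uniform $(v,w)$-weighted relative $K$-stability \emph{for models} already implies $T_\C$-coercivity. It therefore suffices to show that the uniform inequality $\DF_{v,w}^{\mathrm{rel}}\ge\delta\,\jj^\NA_{T_\C}$ on $T$-equivariant ample test configurations extends to $T$-equivariant dominating snc models $(\cX,\cL)$.

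Fix such a model. Apply Theorem~\ref{thmB} with $\G=T_\C$ to the big line bundle $\cL$ on the smooth variety $\cX$, extending the $\C^*$-action so that the group is the torus $T_\C\times\C^*$. This produces equivariant blowups $p_m:\cX_m\to\cX$ with $p_m^*\cL=\cL_m+E_m$, $\cL_m$ ample, $\cL_m^{n+1}\to\langle\cL^{n+1}\rangle$ and $\cL_m^n\cdot K_{\cX_m}\to\langle\cL^n\rangle\cdot K_\cX$. These $(\cX_m,\cL_m)$ are equivariant ample test configurations. In non-Archimedean terms, $\phi_m\to\phi$ in $\mathcal{E}^{1,\NA}$, since the energy converges and the functions $\phi_m$ increase to the envelope $\phi$.

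Next, split $\DF_{v,w}^{\mathrm{rel}}$ into the weighted entropy term and the weighted energy-type terms (the weighted $\mathrm{E}$, $\mathrm{E}^{\Ric}$ and the relative linear term). The energy-type terms are continuous along such sequences in $\mathcal{E}^{1,\NA}$; this is standard and available in \cite{HL25b, BJ26}. The twisted $\jj^\NA_{T_\C}$ is also continuous, because it is an infimum over cocharacter twists of continuous functionals, hence upper semicontinuous, and this is the direction we need.

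The main obstacle is the weighted entropy. In the unweighted case it equals $\langle\cL^n\rangle\cdot K_{\cX/X\times\P^1}$, so its convergence is exactly item (iii) of Theorem~\ref{thmB}. With weights, the entropy is $\int A_X\, v(m_\phi)\,\MA(\phi)$ with a Duistermaat--Heckman-type weighting, so it is no longer a single intersection number. I would first approximate $v$ uniformly on $P$ by polynomials. On each monomial, the weighted measure becomes an equivariant intersection, obtained as a limit of intersections on the bundles $\cX\times_{T_\C}ET_\C^{(k)}$ in the sense of \cite{Ino22, BJT26}. I would then repeat the argument of Theorem~\ref{thmB} on these finite-dimensional approximations. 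The key input is the discrepancy bound $\ord_F K_{\cX_m/\cX}\le nm_0\,\ord_F p_m^*E$ coming from Proposition~\ref{thm:Discrepancies_Bouck}. Combined with $0\le v\le C$ and the weighted orthogonality $\cL_m^n\cdot p_m^*E\,v\to0$, which is dominated by the unweighted orthogonality estimate, this should give convergence of the weighted entropy. Passing to the limit then yields the stability inequality for models, and the log-concave case follows from \cite{AJL23, He19, HL25a, DJL24, DJL25}.
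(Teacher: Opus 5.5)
Your architecture matches the paper's: reduce to models via \cite{HL25b}, approximate a $T_\C\times\C^*$-equivariant snc model by the ample approximations of Theorem~\ref{thmB}, and pass the stability inequality to the limit. However, the limiting step for the reduced norm runs the wrong way. From $\DF_{v,w}(\phi_m)\ge\delta\,\jj^{\NA}_{T_\C}(\phi_m)$ and $\DF_{v,w}(\phi_m)\to\DF_{v,w}(\phi)$ you only get $\DF_{v,w}(\phi)\ge\delta\limsup_m\jj^{\NA}_{T_\C}(\phi_m)$. So you need $\jj^{\NA}_{T_\C}(\phi)\le\limsup_m\jj^{\NA}_{T_\C}(\phi_m)$, which is a lower-semicontinuity statement. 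An infimum of continuous functionals is upper semicontinuous, which gives exactly the opposite inequality and is useless here. You need some uniformity in the twist $\mu$, for instance that the infimum can be taken over a fixed compact set of twists for $\phi$ in a $d_1$-bounded set. The paper does not argue this by hand. It gets strong convergence $\phi_m\to\phi$ from \cite[Lem.~4.5]{Li21}, and then imports continuity of all non-entropy terms from \cite[Lem.~3.7, Lem.~3.10, Cor.~3.13]{HL25b}. Two smaller points. The $\phi_m$ need not increase, and certainly not after the ample perturbation in Step~1 of the proof of Theorem~\ref{thmB}; strong convergence comes from $\phi_m\le\phi$ together with convergence of energies. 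Also, Theorem~\ref{thmB} alone does not make $(\cX_m,\cL_m)$ test configurations, since the centers must lie in $\cX_0$. The paper arranges this by replacing $\cL$ with $\cL+c\cX_0$ so that $\mathbf{B}_+(\cL)\subset\cX_0$.

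For the entropy, the bound $\ord_F K_{\cX_m/\cX}\le C\,\ord_F p_m^*E$ is not a divisor inequality on the log resolution $\cX_m$. Proposition~\ref{thm:Discrepancies_Bouck} only controls prime divisors of the normalized blow-up of $\mathfrak{a}_m$, i.e.\ Rees valuations. These carry the Monge--Amp\`ere measure of the semiample approximations, but not that of the perturbed ample $\cL_m$ you actually use. Your sketch also never handles the pullback part $p_m^*K_{\cX/X\times\P^1}$. In the unweighted case that part is Lemma~\ref{lem:Special_H}, which relies on \cite[Thm.~A]{BFJ09}, and re-running Theorem~\ref{thmB} on Borel-type approximations $\cX\times_{T_\C}ET^{(k)}$ with polynomial weights is not an evident substitute.

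None of this machinery is needed. Write $K_{\cX_m/X\times\P^1}=K_{\cX_m/\cX}+p_m^*K_{\cX/X\times\P^1}$ and use linearity of $(\cdots)_v$ in the last entry.
\begin{itemize}
\item Since $K_{\cX_m/\cX}\ge 0$ and $v$ is bounded on $P$, \cite[Rem.~4.2]{HL25b} gives $(\cL_m^n\cdot K_{\cX_m/\cX})_v\le C\,\cL_m^n\cdot K_{\cX_m/\cX}$. The right-hand side tends to $0$ by item (iii) of Theorem~\ref{thmB} and Lemma~\ref{lem:Special_H}, so the unweighted result is used directly rather than re-proved.
\item The term $(\cL_m^n\cdot p_m^*K_{\cX/X\times\P^1})_v$ is the integral of a fixed model function against $\MA^{\NA}_v(\phi_m)$. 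It converges to $\langle\cL^n\rangle_v\cdot K_{\cX/X\times\P^1}$ by strong continuity of the weighted Monge--Amp\`ere operator \cite[Thm.~3.20]{HL25b}.
\end{itemize}
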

Considering the unweighted case $(v,w)=(1,1)$, Theorem~\ref{thmC} in particular provides a Yau-Tian-Donaldson correspondence for Calabi's extremal K\"ahler metrics (see also~\cite{Jub23, JY26}).

\subsection*{On the recent Boucksom-Jonsson and Darvas-Zhang papers \& equivalent stability notions}
In addition to the already cited papers, two interesting articles on solutions to the Yau-Tian-Donaldson Conjecture have recently appeared:~\cite{DZ26, BJ26} (see also~\cite{BJ26b}). 
The current manuscript is independent of them. As mentioned above, they solved~\cite[Conj.~1.6]{Li22b}, establishing a version of Theorem~\ref{thmA} where the uniform $K$-stability notions are replaced by stronger stability notions. Moreover, in both papers, the existence of cscK metrics has been linked to algebro-geometric notions given as variants of the original one.

In~\cite[Thm.~1.1]{DZ26} a Yau-Tian-Donaldson correspondence has been achieved in terms of uniform $K^\beta$-stability for some $\beta>0$, where the latter is a \emph{transcendental quantized} version of uniform $K$-stability, following the analytic picture of the space of K\"ahler metrics. $K^\beta$-stability takes into consideration a twisted version of the Donaldson-Futaki invariant for ample test configurations, which pointwise increasingly approximates the original Donaldson-Futaki invariant. Moreover, they also made progress in the Boucksom-Jonsson Regularization Conjecture of Corollary~\ref{corA}, showing that it was enough to consider a special class of big models (\emph{\textquotedblleft log discrepancy models\textquotedblright}), yielding a version of Theorem~\ref{thmA} with respect to $\Aut^\circ(X,L)$-uniform $K$-stability for log discrepancy models.

In~\cite{BJ26}, following their deeply developed Non-Archimedean formalism, the authors instead showed a Yau-Tian-Donaldson correspondence with respect to (uniform) $\widehat{K}$-polystability. Interestingly, $\widehat{K}$-polystability is proved to be equivalent to its uniform version. $\widehat{K}$-polystability extends the definition of $K$-polystability to all the elements in $\mathcal{E}^{1,\NA}$, not only for those given by ample test configurations. However, although it regards the whole space $\mathcal{E}^{1,\NA}$, $\widehat{K}$-polystability has a concrete algebro-geometric formulation in terms of divisorial valuations, as explained in~\cite[Sec.~8]{BJ26} (cf.~\cite{BJ23II}). Replacing $\mathcal{E}^{1,\NA}$ with $\mathcal{H}^{\NA}$ would give a \emph{uniform $K$-polystability} notion, as the authors have observed. Lastly, in~\cite{BJ26}, a more general study of the weighted relative case has also been performed, connecting the existence of weighted relative extremal metrics to a weighted relative version of the (equivariant) $\widehat{K}$-polystability.
\smallskip

As a consequence of Theorem~\ref{thmB}, we can prove the following result, which in particular unifies these algebro-geometric notions.
\begin{maincor}\label{corB}
    Let $L\to X$ be an ample line bundle on a smooth projective variety. Then the following are equivalent:
    \begin{itemize}
        \item[i)] $(X,L)$ is $\Aut^\circ(X,L)$-uniformly $K$-stable;
        \item[ii)] $(X,L)$ is uniformly $K$-polystable;
        \item[iii)] $(X,L)$ is $\Aut^\circ(X,L)$-uniformly $K$-stable for log discrepancies models;
        \item[iv)] $(X,L)$ is $\Aut^\circ(X,L)$-uniformly $K$-stable for models;
        \item[v)] $(X,L)$ is uniformly $\widehat{K}$-polystable;
        \item[vi)] $(X,L)$ is $\widehat{K}$-polystable.
    \end{itemize}
    In the case $\Aut^\circ(X,L)$ is trivial, we also have that they are all equivalent to
    \begin{itemize}
        \item[vii)] $(X,L)$ is uniformly $K^\beta$-stable for some $\beta>0$.
    \end{itemize}
\end{maincor}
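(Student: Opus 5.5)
The plan is to prove the equivalences by chaining implications through known results. The only genuinely new input is Theorem~\ref{thmB}, through Corollary~\ref{corA}. It lets us pass from stability tested on ample test configurations to stability tested on models.

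First, the trivial implications. Every ample test configuration is in particular a (log discrepancy) model, so we have (iv)$\Rightarrow$(iii)$\Rightarrow$(i). Every uniform notion implies its non-uniform counterpart, which gives (v)$\Rightarrow$(vi). Restricting $\widehat{K}$-polystability from $\mathcal{E}^{1,\NA}$ to $\mathcal{H}^\NA$ gives (v)$\Rightarrow$(ii).

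Next we identify the two families of notions.
\begin{itemize}
\item We check that uniform $K$-polystability in the sense of~\cite{BJ26}, restricted to $\mathcal{H}^\NA$, coincides with $\Aut^\circ(X,L)$-uniform $K$-stability. Both ask that $\DF$ dominate a multiple of the reduced norm $\inf_\xi \jj^\NA$ over twists by cocharacters. The comparison of equivariant versus non-equivariant test families goes through~\cite[Sec.~8]{BJ26} and~\cite{Li22b}, which yields (i)$\Leftrightarrow$(ii).
\item By~\cite{BJ26}, $\widehat{K}$-polystability is equivalent to its uniform version, so (v)$\Leftrightarrow$(vi).
\item The same paper proves that (v) is equivalent to the existence of a cscK metric, equivalently to equivariant coercivity of the Mabuchi functional by~\cite{CC21I,CC21II}.
\item In~\cite{Li22b} it is shown that uniform stability for models implies this coercivity. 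Conversely, coercivity implies (iv) by slope formulas along geodesic rays (cf.~\cite{Li22b}, using~\cite[Conj.~1.6]{Li22b}, now known). Hence (iv)$\Leftrightarrow$ cscK $\Leftrightarrow$(v).
\end{itemize}

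The key step is (i)$\Rightarrow$(iv). Let $(\cX,\cL)$ be a $\G$-equivariant dominating snc model, with $\G=\Aut^\circ(X,L)$ (or a maximal reductive subgroup). Corollary~\ref{corA} provides $\G$-equivariant ample test configurations $(\cX_m,\cL_m)$ converging to $(\cX,\cL)$ in $\mathcal{E}^{1,\NA}$ with $\hh^\NA(\cX_m,\cL_m)\to\hh^\NA(\cX,\cL)$. Write $\DF=\hh^\NA+(\text{energy part})$, i.e.\ $\mab^\NA=\hh^\NA+\text{(}\mathcal{E}^{1,\NA}\text{-continuous terms)}$. The non-entropy terms (energy and $\mathcal{E}^{K_X}$-type terms) are continuous in the strong topology, and so is the twisted norm $\inf_\xi \jj^\NA(\cdot_\xi)$, being $1$-Lipschitz in the Darvas metric. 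Passing to the limit in $\mab^\NA(\phi_m)\ge\delta\inf_\xi\jj^\NA(\phi_{m,\xi})$ gives the inequality for the model. Log discrepancy models are handled identically, and reduction from arbitrary models to dominating snc ones is by~\cite[Prop.~6.3]{Li22b}. This gives (i)$\Rightarrow$(iv), and the chain closes.

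For (vii), assume $\Aut^\circ$ is trivial. By~\cite[Thm.~1.1]{DZ26}, uniform $K^\beta$-stability for some $\beta$ is equivalent to the existence of a (unique) cscK metric, hence to (v) by the above.

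The main obstacle is the limiting argument in (i)$\Rightarrow$(iv). One must ensure that the approximants of Corollary~\ref{corA} converge strongly, not just in entropy, and that the equivariant twisted $\jj^\NA$ passes to the limit uniformly in the cocharacter $\xi$. I would first try to use the Lipschitz bound together with the infimum being attained on a compact set of $\xi$.
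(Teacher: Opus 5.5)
The gap is your treatment of (ii). The rest of your chain agrees with the paper:
\begin{itemize}
\item (iv)$\Rightarrow$(iii)$\Rightarrow$(i), (v)$\Rightarrow$(ii) and (v)$\Rightarrow$(vi) are formal;
\item (i)$\Rightarrow$(iv) is Corollary~\ref{corA} fed into Li's framework, and your limiting argument is the one of \cite{Li21,Li22b};
\item (i), (v), (vi), (vii) are tied together through the existence of a cscK metric, via Theorem~\ref{thmA}, \cite[Thm.~A]{BJ26} and \cite[Thm.~1.1]{DZ26}.
\end{itemize}
For (ii), you claim that uniform $K$-polystability simply ``coincides'' with $\Aut^\circ(X,L)$-uniform $K$-stability, pointing to \cite[Sec.~8]{BJ26} and \cite{Li22b}. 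This does not follow by unwinding definitions. Notion (ii) is the restriction to $\mathcal{H}^\NA$ of the notion of \cite{BJ26}, which has its own class of test objects and its own reduced norm. Comparing it with a condition tested only on $\Aut^\circ(X,L)$-equivariant ample test configurations is an equivariant-versus-non-equivariant reduction that is not available in the cscK setting. The paper explicitly says it is natural to ask for an algebraic proof of (i)$\Leftrightarrow$(ii), i.e.\ no direct argument is available. Without it, your argument only yields (v)$\Rightarrow$(ii), nothing ever leaves (ii), and the loop does not close.

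The missing implication is (ii)$\Rightarrow$(v), and you already have the tool: the same regularization argument you used for (i)$\Rightarrow$(iv), applied on the $\widehat{K}$ side.
\begin{itemize}
\item Theorem~\ref{thmB} gives a non-equivariant (or torus-equivariant, if needed) version of Corollary~\ref{corA}.
\item Combine this with the reduction to models of \cite[Prop.~6.3]{Li22b} (see also \cite[Prop.~7.14]{BJ26}). Then every $\phi\in\mathcal{E}^{1,\NA}$ is a strong limit of $\phi_m\in\mathcal{H}^\NA$ with $\hh^\NA(\phi_m)\to\hh^\NA(\phi)$.
\item Hence $\mab^\NA(\phi_m)\to\mab^\NA(\phi)$, while the reduced norm is strongly continuous.
\item So the uniform inequality defining (ii) on $\mathcal{H}^\NA$ extends to all of $\mathcal{E}^{1,\NA}$, which is (v).
\end{itemize}
Then (ii)$\Rightarrow$(v)$\Leftrightarrow$(vi)$\Leftrightarrow$(cscK)$\Leftrightarrow$(i) closes the chain. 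This is the paper's route: (i) and (ii) are compared only through the existence of cscK metrics, never directly.
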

In particular, Theorem~\ref{thmA} can be also stated with respect to uniform $K$-polystability, providing an equivalent version of the Yau-Tian-Donaldson correspondence.

We refer to~\cite{BJ26, DZ26} for the original definitions in $(ii), (iii), (v), (vi)$ and $(vii)$. Combining Theorem~\ref{thmA},~\cite[Thm.~1.1,~Thm 1.3]{DZ26},~\cite[Thm.~A]{BJ26} and other previous results, the unique implication we prove in Corollary~\ref{corB} is $(ii)\Rightarrow (v)$. As anticipated, this is again a consequence of Theorem~\ref{thmB}.

Observe that the equivalences $(i)\Leftrightarrow (iii) \Leftrightarrow (iv)$ and $(ii)\Leftrightarrow (v)$ are proved algebraically. It is then natural to wonder if there is an algebraic proof connecting them, for instance by checking that $(i)\Leftrightarrow (ii)$. Similarly, an algebraic proof linking $(vi)$ (and also $(vii)$, in the trivial automorphism case) to the others does not seem to be available yet, as already observed in~\cite{BJ26}.

It is also worth underlining that $(i), (ii)$ are \emph{classical} $K$-stability notions since they deal with the Donaldson-Futaki invariants of ample test configurations. In the setting of discrete automorphisms they both coincide with uniform $K$-stability. In such cases, $(vii)$ also considers ample test configurations. However, it deals with a $\beta$-twisted version of Donaldson-Futaki invariants, and to go back to the classical $K$-stability one would need to take the limit as $\beta\to +\infty$.
\smallskip

Note that, as a consequence of Theorem~\ref{thmA} and~\cite[Cor.~1.2]{DR24}, the existence of a unique cscK metric is also detected by \emph{uniform arc $K$-stability} (\cite{DR24,Der26}; see also~\cite{Don12}). Interestingly, this Yau-Tian-Donaldson correspondence has been exploited in~\cite{Der26}, where the author has studied the constant scalar curvature K\"ahler locus for flat families, obtaining new examples of cscK metrics.

\smallskip
Combining Theorem~\ref{thmC} with~\cite[Thm.~B]{BJ26}, one can also deduce an analogous result of Corollary~\ref{corB} for the weighted relative setting.

\subsection*{Other future possible developments} 
In the companion paper~\cite{Tru26} a \emph{transcendental version} of Theorem~\ref{thmB} is investigated, obtaining conditions that ensure the existence of Special Fujita Approximations for general K\"ahler classes on K\"ahler manifolds. As a consequence, some progress on the transcendental Yau-Tian-Donaldson Conjecture is made (following~\cite{DR17, Sjo18, Der18, Sjo20, MP25, MPWN25}).

It is also natural to try to generalize Theorem~\ref{thmA} to singular varieties. Although there are still many challenges, recent progress on singular cscK metrics has been made (see~\cite{PTT23, BJT26, Sze25, PT26}) and on the Yau-Tian-Donaldson Conjecture (see~\cite{HL25b}, following Boucksom-Jonsson's Non-Archimedean formalism). Similarly to the Fano setting, obtaining a singular version of the Yau-Tian-Donaldson correspondence may lead to the construction of moduli spaces and other related topics (see~\cite{FS90, DN25} in this perspective). Note that, as previously explained, a version of Theorem~\ref{thmB} for singular varieties can be easily stated.

\subsection*{Structure of the paper}
Section~\ref{sec:2} is dedicated to introducing all the necessary preliminaries. Although we could have been more concise, we have chosen to recall a few facts regarding $K$-stability as a courtesy to the reader.
Proposition~\ref{thm:Discrepancies_Bouck} is then proved in Section~\ref{sec:3}. As mentioned before, this bound on the discrepancies represents the key tool in proving Theorem~\ref{thmB}.
In Section~\ref{sec:4} Special Fujita Approximations are defined, while Section~\ref{sec:5} is dedicated to the proof of Theorem~\ref{thmB}. In the last Section~\ref{sec:6} we describe in more detail how Theorem~\ref{thmB} implies Theorem~\ref{thmA}, Corollaries~\ref{corA} and~\ref{corB}, and Theorem~\ref{thmC}. Finally, in the Appendix~\ref{appendix} by S. Boucksom, an algebraic proof of Proposition \ref{thm:Discrepancies_Bouck} has been provided, also establishing the Regularization Conjecture over the field $k\lau{t}$ through the non-Archimedean formalism.

\section*{Acknowledgments}
The author is extremely grateful to S. Boucksom, not only for useful mathematical comments but also for suggesting an algebraic alternative proof of Theorem~\ref{thmB} after the first ArXiv version of this manuscript, and for agreeing to add the Appendix~\ref{appendix}.

The author thanks S. Trapani for carefully reading the first drafts of the paper and raising interesting questions that helped improve the article. Finally, the author is also thankful to P. Piccione, M. Jonsson and R. Dervan for mathematical remarks, and to S. Jubert, Y. Chen, F. Fagioli, G. M. Lido, R. Vacca, G. Codogni, F. Bernasconi, L. Franzoi for fruitful discussions during the preparation of the manuscript.

\section{Preliminaries}\label{sec:2}
Let $X$ be a smooth projective variety of (complex) dimension $n$. We recall that a line bundle $L\to X$ is said to be \emph{big} if
$$
\Vol(L):=\limsup_{k\to +\infty}\frac{\dim \mathrm{H}^0(X,kL)}{k^{n}/n!}>0.
$$

\subsection{Positive intersection product}\label{ssec:Pos_Int_Prod}
Let $L\to X$ be a big line bundle. For any $r=1,\dots,n$, the \emph{positive intersection product} $\langle L^r\rangle$ has been introduced analytically in~\cite{BDPP13} (following the PhD thesis~\cite{BouThesis}) and has been algebraically described in~\cite{BFJ09}. For the purposes of the paper, we recall that
$$
\langle L^{n-1}\rangle\cdot D:=\langle c_1(L)^{n-1} \rangle\cdot D=\sup_{\mu^*L =P+E} P^{\, n-1}\cdot \mu^*D
$$
for any $D$ effective divisor, where the supremum is over all birational morphisms $\mu:Y\to X$ from $Y$ smooth projective varieties and all the decompositions $\mu^*L=P+E$ for $P$ nef $\Q$-line bundles and $E$ effective $\Q$-divisors. The definition extends linearly. Note that Fujita's Approximation Theorem~\cite{Fuj94} can be stated as $\langle L^n\rangle=\Vol(L)$.

\subsection{Augmented base locus and Orthogonality}\label{ssec:Non-Kahler}
Let $L\to X$ be a big line bundle. Following~\cite{Nak00, Nak03}, the \emph{augmented base locus of $L$} has been introduced in~\cite[Def.~1.2]{ELMNP06} as
$$
\mathbf{B}_+(L):=\bigcap_{A}\mathbf{B}(L-A)
$$
where the intersection is over all the ample $\Q$-line bundles and where $\mathbf{B}(\cdot)$ is the stable base locus (see~\cite[Def.~2.1.20]{Laz04}). For a big and nef line bundle $L$ the augmented base locus coincides with the \emph{null-locus}, namely
\begin{equation*}
    \mathbf{B}_+(L)=\bigcup_{L^{\dim V} \cdot V=0}V
\end{equation*}
where the union is over all analytic irreducible subsets $V\subset X$ (see~\cite[Thm.~0.3]{Nak00}).

The following orthogonality result will be essential in the proof of Theorem~\ref{thmB}.
\begin{thm}[{\cite[Sec.~4]{BDPP13},~\cite[Thm.~4.15, Thm.~B]{BFJ09},~\cite[Thm.~C]{ELMNP09}}]\label{thm:Orthogonality}
    Let $L\to X$ be a big line bundle on a smooth projective variety $X$. Then
    \begin{equation*}
        \Vol(L)=\langle L^{n-1}\rangle \cdot L.
    \end{equation*}
    In particular a prime divisor $D$ is contained in the augmented base locus $\mathbf{B}_+(L)$ if and only if
    \begin{equation*}
        \langle L^{n-1}\rangle \cdot D=0.
    \end{equation*}
\end{thm}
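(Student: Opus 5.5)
The plan is to derive both assertions from the differentiability of the volume function established in~\cite{BFJ09}. For a big line bundle $L$ and any divisor $D$, the function $t\mapsto \Vol(L+tD)$ is differentiable at $t=0$ with
$$
\frac{d}{dt}\Big|_{t=0}\Vol(L+tD)=n\,\langle L^{n-1}\rangle\cdot D .
$$
I would recall the proof only in outline. The inequality $\geq$ comes from the definition of $\langle L^{n-1}\rangle\cdot D$ as a supremum over decompositions $\mu^*L=P+E$, combined with $\Vol(L+tD)\geq \Vol(P+t\mu^*D)$ and the expansion of $(P+t\mu^*D)^n$; this needs care when $D$ is not effective, but the definition extends linearly. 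The reverse inequality is the delicate part. It uses the orthogonality estimate of~\cite[Thm.~4.1]{BDPP13}: for Fujita approximations $\mu_m^*L=P_m+E_m$ one has
$$
\bigl(P_m^{n-1}\cdot E_m\bigr)^2\leq C\,\bigl(\Vol(L)-P_m^n\bigr)\Vol(L)\longrightarrow 0,
$$
which allows one to control second-order terms uniformly.

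Given differentiability, the first identity follows from homogeneity. Since $\Vol((1+t)L)=(1+t)^n\Vol(L)$, differentiating at $t=0$ gives $n\Vol(L)=n\langle L^{n-1}\rangle\cdot L$. There is also a direct check that avoids the derivative. Along a Fujita sequence, $P_m^{n-1}\cdot\mu_m^*L=P_m^n+P_m^{n-1}\cdot E_m$. The first term tends to $\Vol(L)$ and the second tends to $0$ by the estimate above, so $\langle L^{n-1}\rangle\cdot L\geq\Vol(L)$. For the upper bound, the supremum defining $\langle L^{n-1}\rangle$ can be realized along such a sequence because of monotonicity of $P\mapsto P^{n-1}$ under increasing nef parts; I would cite~\cite{BFJ09} for this step rather than reprove it.

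For the characterization of divisors in $\mathbf{B}_+(L)$, I would identify $\langle L^{n-1}\rangle\cdot D$ with the restricted volume $\Vol_{X|D}(L)$ for $D$ prime. By~\cite[Thm.~C]{ELMNP09}, $\Vol_{X|D}(L)>0$ if and only if $D\not\subset\mathbf{B}_+(L)$, so the identification would finish the proof. A more hands-on route is the following.
\begin{itemize}
\item If $D\not\subset\mathbf{B}_+(L)$, write $L=A+E$ with $A$ an ample $\Q$-line bundle and $E$ effective with $D\not\subset\mathrm{Supp}\,E$. Choosing the decomposition with $P=A$ gives $\langle L^{n-1}\rangle\cdot D\geq A^{n-1}\cdot D>0$.
\item If $D\subset\mathbf{B}_+(L)$, then $\Vol(L-tD)=\Vol(L)$ for small $t>0$, because $D$ lies in the diminished or augmented base locus and sections of $kL$ vanish along $D$ to order proportional to $k$. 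The one-sided derivative of $\Vol$ in direction $-D$ then vanishes, and differentiability gives $\langle L^{n-1}\rangle\cdot D=0$.
\end{itemize}

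The main obstacle is the differentiability statement, or equivalently the identity $\langle L^{n-1}\rangle\cdot D=\Vol_{X|D}(L)$. It rests on the orthogonality estimate and a careful Fujita approximation argument, which I would import from~\cite{BFJ09,BDPP13}. A second point needing care is justifying, in the case $D\subset\mathbf{B}_+(L)$, that $\Vol(L-tD)$ is constant for small $t$. Here I would rely on~\cite{ELMNP09}.
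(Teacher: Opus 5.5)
Your citation route is essentially what the paper does. The paper states this theorem without proof, quoting \cite{BDPP13,BFJ09,ELMNP09}. Deducing $\Vol(L)=\langle L^{n-1}\rangle\cdot L$ from the differentiability of the volume by homogeneity is sound. So is obtaining the $\mathbf{B}_+$ criterion from the identification of $\langle L^{n-1}\rangle\cdot D$ with the restricted volume plus \cite[Thm.~C]{ELMNP09}. The self-contained ``hands-on'' route, however, fails in both bullets.

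\textbf{First bullet.} The supremum description of $\langle L^{n-1}\rangle\cdot D$ over decompositions $\mu^*L=P+E$ is only valid for nef $D$, where $P\mapsto P^{n-1}\cdot\mu^*D$ is monotone. For a non-nef prime divisor the supremum overshoots. Your argument never uses $D\not\subset\mathrm{Supp}\,E$, so by Kodaira's lemma it would give $\langle L^{n-1}\rangle\cdot D>0$ for every prime divisor, contradicting the statement. Concretely, take $X=\mathrm{Bl}_p\P^2$, with $H$ the pulled-back line class, $F$ the exceptional curve, and $L=H+F$. Then $\langle L\rangle=H$, so $\langle L\rangle\cdot F=0$ and $F\subset\mathbf{B}_+(L)$. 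Yet $L=(H-\tfrac12F)+\tfrac32F$ with $H-\tfrac12F$ ample and $(H-\tfrac12F)\cdot F=\tfrac12$.

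The same example shows that the lower bound in your direct check of the first identity cannot come from the supremum either: $P=H-F$ gives $P\cdot L=2>1=\Vol(L)$. Use instead that $P_m^{n-1}\to\langle L^{n-1}\rangle$ as b-classes along the Fujita sequence, which gives both inequalities at once.

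A correct positivity argument goes through restricted volumes. Multiplying by a section of $kE$ that does not vanish on $D$ gives $\Vol_{X|D}(L)\geq \Vol_{X|D}(A)=A^{n-1}\cdot D>0$; this is where $D\not\subset\mathrm{Supp}\,E$ is used. You still need \cite{BFJ09} to identify $\Vol_{X|D}(L)$ with $\langle L^{n-1}\rangle\cdot D$.

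\textbf{Second bullet.} The equality $\Vol(L-tD)=\Vol(L)$ for small $t>0$ holds only when $D$ is a component of the diminished base locus $\mathbf{B}_-(L)$, i.e.\ when the order of vanishing of $|kL|$ along $D$ grows linearly in $k$. Divisorial components of $\mathbf{B}_+(L)$ need not be of this kind. For $L=H$ on $\mathrm{Bl}_p\P^2$, $F$ is the null locus, so $F\subset\mathbf{B}_+(H)$. But $|kH|$ is base point free and $\Vol(H-tF)=1-t^2$ for $0\le t\le 1$.

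The conclusion $\langle H\rangle\cdot F=0$ is true, but only because the derivative vanishes. The case $D\subset\mathbf{B}_+(L)\setminus\mathbf{B}_-(L)$ is exactly the nontrivial one. Already for $L$ big and nef it is the hard inclusion in Nakamaye's theorem $\mathbf{B}_+(L)=\mathrm{Null}(L)$. In general it rests on the vanishing of restricted volumes along components of $\mathbf{B}_+(L)$ from \cite[Thm.~C]{ELMNP09}, combined with \cite{BFJ09}.

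So the constancy of the volume you plan to import from \cite{ELMNP09} is false in general, and the hands-on route does not bypass the cited results. Keep the citation route.
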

We refer to~\cite{BDPP13, BFJ09} to see how Theorem~\ref{thm:Orthogonality} is connected to the duality of the psef and the movable cone, to the differentiability of the volume function on the big algebraic cone, and to Demailly's holomorphic Morse inequalities.

\subsection{Graded system of ideals and multiplier ideal sheaves}\label{ssec:Ideals}
We recall that a family of ideal sheaves $\mathfrak{a}_{\bullet}=\{\mathfrak{a}_m\}_{m\in\Z_{\geq 0}}$ is said to be a \emph{graded system of ideals} if $\mathfrak{a}_0=\mathcal{O}_X$ and $\mathfrak{a}_m \cdot \mathfrak{a}_l\subset \mathfrak{a}_{m+l}$ for any $m,l\in\Z_{\geq 0}$ (\cite[Def.~2.4.14]{Laz04}). A typical example is given by the base ideals $\mathfrak{a}_m:=\mathfrak{b}\left(\left| mL\right|\right)$ of a line bundle $L\to X$ on a projective variety~\cite[Def.~1.1.8]{Laz04}.

Given a coherent ideal sheaf $\mathfrak{a}\subset \mathcal{O}_X$ and a coefficient $c\in \Q_{>0}$, the \emph{multiplier ideal sheaf} $\mathcal{J}(\mathfrak{a}^c)$ is defined as
$$
\mathcal{J}(\mathfrak{a}^c)=p_*\mathcal{O}_Y\left(K_{Y/X}-[cE]\right)
$$
where $p:Y\to X$ is any fixed log resolution of the ideal $\mathfrak{a}$ and where $E$ is the effective divisor such that $p^{-1}\mathfrak{a}\cdot\mathcal{O}_Y=\mathcal{O}_Y(-E)$. Given a graded system of ideals $\mathfrak{a}_{\bullet}$ and an exponent $c\in\Q_{>0}$, we denote by $\mathcal{J}\left(\mathfrak{a}_{\bullet}^c\right)$ the unique maximal element of $\left\{\mathcal{J}\big(\mathfrak{a}_m^{c/m}\big)\right\}_{m\geq 1}$. When $\mathfrak{a}_{\bullet}$ is the graded system associated to the base ideals $\mathfrak{a}_m:=\mathfrak{b}\left(\left|mL\right|\right)$ of a line bundle $L\to X$, one obtains that $\mathcal{J}\left(\mathfrak{a}_{\bullet}^c\right)=\mathcal{J}\big(c\cdot \lVert L\rVert\big)$ is the asymptotic multiplier ideal associated to the complete linear series of $L$ (see~\cite[\S 11.1.A]{Laz04II})

We refer to~\cite[Part~Three]{Laz04II} and the references therein for many properties of these sheaves.

\subsection{CscK metrics}\label{ssec:CscK}
A \emph{constant scalar curvature K\"ahler metric} (cscK metric) in a cohomology class $\alpha$ is a K\"ahler metric associated to a K\"ahler form $\om\in \alpha$ such that
$$
    S(\om)\equiv \overline{S}
$$
where $S(\om)=\tr_{\om}\Ric(\om)=n\frac{\Ric(\om)\wedge\om^{n-1}}{\om^n}$, while $\overline{S}=\frac{\int_X S(\om)\om^{n}}{\int_X \om^{n}}=n\frac{c_1(X)\cdot\alpha^{n-1}}{\alpha^n}$.
The typical examples of cscK metrics are the so-called \emph{K\"ahler-Einstein metrics} in which the K\"ahler forms are proportional to their Ricci forms.
In the case $\alpha=c_1(L)$ for an ample line bundle, $(X,L)$ is said to admit a cscK metric if there is a cscK metric with the associated form in $c_1(L)$.
    
   \subsection{(Uniform) $K$-stability}\label{sec:K-stability}
    
    Let $L\to X$ be an ample line bundle on a normal projective variety. We briefly recall here the definition of (uniform) $K$-stability. We refer to~\cite{BHJ17} and references therein.

    A \emph{test configuration} $\cX$ for $X$ is a normal\footnote{It is enough to consider normal test configurations as a consequence of~\cite[Prop.~3.15]{BHJ17}.} projective variety equipped with a $\C^*$-action and with a $\C^*$-equivariant surjective morphism $\pi: \cX\to \P^1$ such that $\pi^{-1} \left(\P^1\setminus \{0\}\right) \simeq X\times\left( \P^1\setminus \{0\}\right)$, where $\P^1$ is endowed with the standard $\C^*$-action. A \emph{test configuration} $(\cX,\mathcal{L})$ for $(X,L)$ then consists of a test configuration $\cX$ for $X$ and of a $\C^*$-linearized $\Q$-line bundle $\mathcal{L}$ lifting the $\C^*$-action on $\cX$ such that $\mathcal{L}_{|\pi^{-1}(\P^1\setminus \{0\})}\simeq p_1^*L_{|X\times (\P^1\setminus\{0\})}$ through the isomorphism $\pi^{-1}(\P^1\setminus\{0\})\simeq X\times (\P^1\setminus\{0\})$, where $p_1$ is the projection on the first factor.

    Two test configurations $(\cX,\cL), (\cX',\cL')$ are said to be \emph{equivalent} if there exists a third test configuration $(\cX'',\cL'')$ and $\C^*$-equivariant morphisms $\pi: \cX''\to \cX,\pi':\cX''\to \cX'$ such that $\cL''=\pi^*\cL= \pi'^*\cL'$. Any test configuration $(\cX,\cL)$ admits an equivalent test configuration $(\cX',\cL')$ that is \emph{dominating}, i.e. such that the map $\pi':\cX'\to \P^1$ admits a $\C^*$-equivariant factorization through a morphism $\cX'\to X\times \P^1$. By Hironaka, such dominating test configuration $(\cX',\cL')$ can also be chosen to be \emph{smooth} (i.e. $\cX'$ smooth) when $X$ is smooth.
    A test configuration $(\cX,\cL)$ is said to be \emph{trivial} if $\cX=X\times \P^1$ with trivial $\C^*$-action. In such a case, we necessarily have $\cL=p_1^* L+c\cX_0$ for $c\in\Q$ where $\cX_0:=\pi^{-1}\{0\}$ with obvious notation~\cite[Lem.~2.10]{BHJ17}.

    A test configuration $(\cX,\cL)$ is \emph{ample} (resp. \emph{semiample}) if $\cL$ is ample (resp. semiample).
    For any semiample test configuration $(\cX,\cL)$ the \emph{Donaldson-Futaki invariant} $\DF(\cX,\cL)$ is naturally attached to the induced action on the central fiber $(\cX_0,\cL_0)$. We refer to~\cite[Sec.~3.3]{BHJ17} for the original definition of $\DF(\cX,\cL)$; here we recall that by~\cite[Prop.~3.12]{BHJ17} it can be intersection-theoretically described as
    \begin{equation}
        \label{eqn:DF}
        \DF(\cX,\cL)=\overline{S}\,\frac{\cL^{n+1}}{n+1}+\cL^n\cdot K_{\cX/\P^1}
    \end{equation}
    where $\bar{S}:=-nK_X\cdot L^{n-1}/L^n $ as in subsection~\ref{ssec:CscK}.
    
    To characterize when a test configuration is trivial, a \emph{minimum norm} has been introduced independently in~\cite{Der16, BHJ17}. More precisely, letting $(\cX,\cL)$ be an ample test configuration, they defined a quantity $\jj^\NA(\cX,\cL)$ which is non-negative and vanishes if and only if $(\cX,\cL)$ is a trivial test configuration~\cite[Thm.~1.3]{Der16},~\cite[Cor.~B]{BHJ17}. Passing to an equivalent semiample test configuration $(\cX',\cL')$ that is dominating through $\mu_\cX:\cX\to X\times \P^1$, we get the formula
    \begin{equation*}
        \jj^{\NA}(\cX,\cL)=\jj^\NA(\cX',\cL')=\mu_\cX^*p_1^*L^n\cdot \cL'-\frac{\cL'^{n+1}}{n+1}.
    \end{equation*}
    \begin{defi}
        A normal polarized variety $(X,L)$ is said to be
        \begin{itemize}
            \item[i)] \emph{$K$-stable} if $\DF(\mathcal{X},\mathcal{L})\geq 0$ for any ample test configuration $(\cX,\cL)$ with equality if and only if $(\cX,\cL)$ is trivial;
            \item[ii)] \emph{uniformly $K$-stable} if there exists $\sigma>0$ such that
            $$
            \DF(\cX,\cL)\geq \sigma \jj^\NA(\cX,\cL)
            $$
            for any ample test configuration $(\cX,\cL)$.
        \end{itemize}
    \end{defi}
    Uniform $K$-stability implies $K$-stability as $\jj^\NA$ vanishes exactly at trivial ample test configurations, as previously mentioned.
    
    \subsection{$K$-polystability and $\G$-uniform $K$-stability}\label{sec:G-uniform}
    Let $\Aut^\circ(X,L)$ be the identity component of the automorphisms of $X$ that lift to the ample line bundle $L$. Any $\C^*$-action on $X$ naturally induces a diagonal $\C^*$-action on $X\times \C$, producing a test configuration $\cX$ for $X$. If such $\C^*$-action gives a $\C^*$-linearization of $L$, i.e. it is a cocharacter $\C^*\to \Aut^\circ(X,L)$, then it induces a test configuration $(\cX,\cL)$ that is $\C^*$-equivariantly isomorphic to $(X,L)\times \C$ over $\C\simeq \P^1\setminus\{+\infty\}$, i.e. $(\cX,\cL)$ is a \emph{product test configuration}. Following the pioneering works~\cite{Don02,Tian97}, in this case the Donaldson-Futaki invariant essentially coincides with the usual Futaki invariant~\cite{Fut83} of the cocharacter (cf.~\cite[Prop.~4.12]{Sjo20}). In particular, (uniform) $K$-stability cannot detect the existence of cscK metrics when $\Aut^\circ(X,L)$ is not trivial, justifying the notions of \emph{$K$-polystability} and \emph{$\G$-uniform $K$-stability}, which we now recall.

    Following~\cite{His16b, His19, Li22}, we fix $\G\subset \Aut^\circ(X,L)$ as a reductive complex Lie group, and let $\T$ be the algebraic torus given as the identity component of the center\footnote{In~\cite{BJ26}, it has been shown how considering the identity component of the \emph{centralizer} of $\G$ yields a finer notion that is more suitable for a Yau-Tian-Donaldson correspondence (see subsection $8.4$ in \emph{loc. cit.}). The two notions clearly coincide when $\G=\Aut^\circ(X,L)$, as in Theorem~\ref{thmA}. Thus, we will not recall the general definition of uniform equivariant $K$-polystability given in~\cite{BJ26}, but we invite the interested reader to look at~\cite{BJ26} for the complete picture.} of $\G$ and denote by $N_\Z(\T):=\Hom(\C^*,\T)$ the cocharacter lattice of $\T$. Set also $N_\Q(\T):=N_\Z(\T)\otimes_\Z\Q$. We then say that a test configuration $(\cX,\cL)$ is $\G$\emph{-equivariant} if $(\cX,\cL)$ is also endowed with a $\G$-action that commutes with the $\C^*$-action and coincides with the action of the first factor on $(X,L)\times \P^1\setminus\{0\}\simeq (\cX,\cL)_{\pi^{-1}(\P^1\setminus\{0\})}$.
    To any $\G$-equivariant test configuration $(\cX,\cL)$ and to any element $\mu\in N_\Q(\T)$, it is naturally associated a $\G$-equivariant test configuration $(\cX_\mu,\cL_\mu)$ where the action is \emph{twisted by $\mu$} (see~\cite{His16b},~\cite[Def.~3.2]{Li22}).
    \begin{defi}
        Let $(X,L)$ be a normal polarized variety and let $\G\subset \Aut^\circ(X,L)$ be a reductive complex Lie group. Then $(X,L)$ is said to be
        \begin{itemize}
            \item[i)] \emph{$K$-polystable} if $\DF(\cX,\cL)\geq 0$ for any ample test configuration $(\cX,\cL)$ with equality if and only if $(\cX,\cL)$ is a product;
            \item[ii)] \emph{$\G$-uniformly $K$-stable} if there exists $\sigma>0$ such that
            $$
            \DF(\cX,\cL)\geq \sigma \inf_{\mu\in N_\Q(\T)}\jj^\NA(\cX_\mu,\cL_\mu)
            $$ 
            for any $\G$-equivariant ample test configuration $(\cX,\cL)$.
        \end{itemize}
    \end{defi}
    The quantity $\jj^\NA(\cX_\mu,\cL_\mu)$ can also be explicitly described (see~\cite[eq.~(2.3)]{His16b}). Note that uniform $K$-stability implies $\G$-uniformly $K$-stability, and that $K$-polystability coincides with $K$-stability when $\Aut^\circ(X,L)$ is trivial. 

    \subsection{$K$-stability for models}\label{sec:K-stab_models}
    There have been several works~\cite{Mab16, Don12, DR24} on strengthening the $K$-stability notion to obtain the existence of cscK metrics. Among them, following~\cite{Sze15}, in~\cite{Li22b, Li21} the $K$\emph{-stability for models} (also called \emph{for model filtrations}) has been introduced and considered for smooth projective varieties. The main idea is to consider (big) test configurations instead of only ample test configurations.
    
    More precisely, since any test configuration $(\cX,\cL)$ is ample on the generic fiber of $\pi:\cX\to \P^1$, there exists a constant $c\gg 1$ such that $(\cX,\cL_c)$ is big where $\cL_c:=\cL+c\cX_0$. Moreover, as a consequence of~\cite[Lem.~A.6]{BFJ15} (see also the proof of~\cite[Thm.~A.4]{BFJ15}) if $c\gg 1$ is large enough then $(\cX,\cL_c)$ is a \emph{big model} in the sense of~\cite[Def.~2.1]{Li21}, i.e. $\mathbf{B}(\cL)=\mathbf{B}(\cL/\P^1)$ where $\mathbf{B}(\cL/\P^1)$ indicates the relative stable base locus. Approximating big models by semiample test configurations and exploiting the Non-Archimedean formalism developed in~\cite{BJ18, BJ25I, BJ23II}, in~\cite{Li21} the formula~\eqref{eqn:DF} has been extended as
    \begin{equation}\label{eqn:DF2}
        \DF(\cX,\cL)=\overline{S}\,\frac{\langle\cL_c^{n+1}\rangle}{n+1}+\langle\cL_c^n\rangle\cdot K_{\cX/\P^1}
    \end{equation}
    for any $(\cX,\cL)$ test configuration where $c\gg 1$ is big enough so that $(\cX,\cL_c)$ is a big model. It can be proven that~\eqref{eqn:DF2} does not depend on the choice of $c\gg 1$. 
    
    Similarly, approximating big models using semiample test configurations shows how the functional $\jj^{\NA}$ naturally extends to big models. Moreover, as in subsection~\ref{sec:G-uniform}, $\G$-equivariant big models $(\cX,\cL)$ can be twisted into big models $(\cX_\mu,\cL_\mu)$.
    We can now recall the extended definitions of stability for models given in~\cite[Def.~2.25]{Li22b}.
    \begin{defi}
        Let $(X,L)$ be a smooth polarized variety and let $\G\subset \mathrm{Aut}^\circ(X,L)$ be a reductive Lie group. Then $(X,L)$ is said to be \emph{$\G$-uniformly $K$-stable for models} if there exists $\sigma>0$ such that
        $$
            \DF(\cX,\cL)\geq \sigma \inf_{\mu\in N_\Q(\T)}\jj^\NA(\cX_\mu,\cL_\mu)
        $$
        for any $\G$-equivariant big model $(\cX,\cL)$. If $\G=\{e\}$ then $\G$-uniform $K$-stability for models is simply said \emph{uniform $K$-stability for models}.
    \end{defi}
    Note that such definitions are equivalent to those given in~\cite{Li22b} as a consequence of~\cite[Prop.~3.4]{Li21}, and that clearly $\G$-uniform $K$-stability for models implies $\G$-uniform $K$-stability.

    \subsection{Boucksom–Jonsson Regularization Conjecture}\label{ssec:RC}
    In~\cite{BJ18,BJ25I, BJ23II} Boucksom and Jonsson deeply investigated a Non-Archimedean approach to $K$-stability, stating their so-called \emph{Regularization Conjecture} (see~\cite[Conj.~2.5]{BJ18}).
    As a consequence of~\cite[Prop.~6.3]{Li22b}, the proof of which is based on~\cite{BFJ15, BFJ16}, this key conjecture can be stated in the following way (see also~\cite[Conj.~4.4]{Li21},~\cite[Conj.~2.12]{Li22c}, which treated the case without automorphisms).
    \begin{conj}\label{conj:RC}
        Let $(X,L)$ be a smooth polarized variety and let $\G\subset \Aut^\circ(X,L)$ be a reductive complex Lie group. For any dominating and simple normal crossing $\G$-equivariant big model $(\cX,\cL)$ there exists a sequence of smooth $\G$-equivariant ample test configurations $\{(\cX_m,\cL_m)\}_{m\in\N}$ dominating $(\cX,\cL)$ such that
        $$
        \lim_{m\to+\infty} \cL_m^{n+1}=\Vol_{\cX}(\cL), \quad \quad \lim_{m\to +\infty}  \cL_m^n\cdot K_{\cX_m}=\langle \cL^n\rangle\cdot K_{\cX}.
        $$
    \end{conj}
    As recalled in the Introduction, solving Conjecture~\ref{conj:RC} would imply that $\G$-uniform $K$-stability is equivalent to $\G$-uniform $K$-stability for models, and it would conclude the proof of the Yau-Tian-Donaldson Conjecture as stated in Theorem~\ref{thmA}. 
    \begin{thm}[{\cite[Thm.~1.10]{Li22b}}]\label{thm:YTD_F}
        Let $(X,L)$ be a smooth projective variety that is $\Aut^\circ(X,L)$-uniformly $K$-stable. If Conjecture~\ref{conj:RC} holds then $(X,L)$ admits a cscK metric.
    \end{thm}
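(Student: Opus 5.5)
The statement is Li's result, and the proof amounts to chaining together known ingredients rather than building new ones. I would argue as follows.

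\textbf{Step 1 (stability for models).} Assume Conjecture~\ref{conj:RC} and let $(\cX,\cL)$ be an $\Aut^\circ(X,L)$-equivariant big model. By~\cite[Prop.~6.3]{Li22b} and the invariance of $\DF$ and $\jj^\NA$ under equivalence and twisting, we may take $(\cX,\cL)$ to be dominating and simple normal crossing.

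\textbf{Step 2 (regularization and passage to the limit).} Conjecture~\ref{conj:RC} then supplies smooth $\G$-equivariant ample test configurations $(\cX_m,\cL_m)$ dominating $(\cX,\cL)$ with
$$\cL_m^{n+1}\to\Vol_\cX(\cL),\qquad \cL_m^n\cdot K_{\cX_m}\to\langle\cL^n\rangle\cdot K_\cX.$$
By~\eqref{eqn:DF} and~\eqref{eqn:DF2}, $\DF(\cX_m,\cL_m)\to\DF(\cX,\cL)$. Here one rewrites $K_{\cX/\P^1}$ in terms of $K_\cX$ and the pullback of $K_{\P^1}$, whose intersection numbers converge by the volume convergence.

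In the same way, $\jj^\NA(\cX_{m,\mu},\cL_{m,\mu})\to\jj^\NA(\cX_\mu,\cL_\mu)$ for each $\mu$. This uses the intersection formula for $\jj^\NA$, which involves only $\cL^{n+1}$ and $\mu_\cX^*p_1^*L^n\cdot\cL$. The latter term converges because the pushforwards to $X\times\P^1$ of $\cL_m^n$ converge; this is the standard monotonicity argument, since $\cL_m\le$ pullback of $\cL$ on dominating models.

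\textbf{Step 3 (uniformity in the twist).} Since the infimum over $\mu\in N_\Q(\T)$ is attained on a compact set by the properness of $\mu\mapsto\jj^\NA_\mu$~\cite{His16b}, we obtain $\inf_\mu\jj^\NA(\cX_{m,\mu},\cL_{m,\mu})\to\inf_\mu\jj^\NA(\cX_\mu,\cL_\mu)$. I expect this to be the main technical point: twisting does not commute naively with the approximation, so one needs the twisted invariants to converge uniformly in $\mu$, which is exactly what~\cite{Li22b} proves via filtrations.

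\textbf{Step 4 (conclusion).} Passing to the limit in $\DF(\cX_m,\cL_m)\ge\sigma\inf_\mu\jj^\NA(\cX_{m,\mu},\cL_{m,\mu})$ shows that $(X,L)$ is $\Aut^\circ(X,L)$-uniformly $K$-stable for models. Li's main theorem~\cite{Li22b} then gives equivariant coercivity of the Mabuchi functional, by comparing slopes along maximal geodesic rays via the Non-Archimedean formalism~\cite{BJ18,BBJ15}. Finally,~\cite{CC21I,CC21II} yields a cscK metric in $c_1(L)$.
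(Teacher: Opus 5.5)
Your proposal is correct and follows the same chain the paper relies on when it cites \cite[Thm.~1.10]{Li22b}: Conjecture~\ref{conj:RC} upgrades $\Aut^\circ(X,L)$-uniform $K$-stability to the same notion for models, via convergence of $\DF$ and of the twisted $\jj^\NA$ along the approximating sequence (cf.~\cite[Lem.~4.5]{Li21}); Li's theorem then gives equivariant coercivity of the Mabuchi functional, and \cite{CC21I,CC21II} give the cscK metric. The slips are cosmetic: Step~1 needs only an equivariant resolution plus orthogonality, not \cite[Prop.~6.3]{Li22b}, and the fibre term $\cL_m^n\cdot\pi^*K_{\P^1}=-2L^n$ is constant rather than controlled by the volume convergence.
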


    Note that being $\Aut^\circ(X,L)$-uniformly $K$-stable includes the fact that $\Aut^\circ(X,L)$ is reductive.
    
\section{Upper bounds on discrepancies}\label{sec:3}
In this section $X$ is a smooth projective variety of dimension $n$. The goal of this section is to prove the following result, which will play a key role in proving Theorem~\ref{thmB}.

\begin{prop}\label{thm:Discrepancies_Bouck}
    Let $\mathfrak{a}\subset \mathcal{O}_X$ be a coherent ideal sheaf, denote by $\mathcal{J}(\mathfrak{a})\subset \mathcal{O}_X$ its multiplier ideal sheaf and let $p:Y\to X$ be the normalized blow-up along $\mathfrak{a}$. Then
    \begin{equation}
        \label{eqn:Bouck}
        \ord_E K_{Y/X}\leq n\left(\ord_E \mathfrak{a} - \ord_E \mathcal{J}(\mathfrak{a})\right)
    \end{equation}
    for any prime divisor $E\subset Y$.
\end{prop}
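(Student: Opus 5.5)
The plan is to use the Jacobian ideals of $\mathfrak{a}$ from~\cite{ELSV04}, as announced in the introduction. Fix a prime divisor $E\subset Y$ and let $x$ be the generic point of $p(E)$. After localizing at $x$ we may assume $X=\Spec R$ with $R$ regular local of dimension $n$. Let $\mathfrak{a}=(f_1,\dots,f_r)$. Over the chart of the blow-up where $f_1$ generates $\mathfrak{a}\cdot\mathcal{O}_Y$, the ring $R[f_2/f_1,\dots,f_r/f_1]$ is a quotient of $R[T_2,\dots,T_r]$ by an ideal containing the linear relations $f_1T_i-f_i$. The first step is the classical Lipman-type computation. The Jacobian ideal $\mathrm{Jac}_{Y/X}$, i.e. the $n$-th Fitting ideal of $\Omega_{Y/X}$, compares to $K_{Y/X}$ on the normalization via $\mathrm{Jac}_{Y/X}\cdot\mathcal{O}_Y\subset\mathcal{O}_Y(-K_{Y/X})$. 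Hence
$$\ord_E K_{Y/X}\le \ord_E \mathrm{Jac}_{Y/X}.$$

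The second step estimates $\ord_E\mathrm{Jac}_{Y/X}$ in terms of $\mathfrak{a}$. The differentials of the relations give $f_1\,dT_i=df_i-T_i\,df_1$, so $\Omega_{Y/X}$ is presented with entries of the form $(\partial_j f_i)/f_1$ modulo relations. Its $n$-th Fitting ideal is then generated by $n\times n$ minors of the matrix of derivatives of the generators, divided by $f_1^n$. Concretely, I expect
$$\mathrm{Jac}_{Y/X}\cdot\mathcal{O}_Y\supseteq \big(\mathrm{Jac}_n(\mathfrak{a})\cdot\mathfrak{a}^{-n}\big)\cdot\mathcal{O}_Y,$$
where $\mathrm{Jac}_n(\mathfrak{a})$ is the ideal generated by the $n\times n$ minors $\det(\partial f_{i_k}/\partial x_j)$. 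This yields
$$\ord_E K_{Y/X}\le \ord_E \mathrm{Jac}_n(\mathfrak{a})-n\,\ord_E\mathfrak{a}.$$
An alternative formulation, closer to~\cite{ELSV04}, uses the $n$-form $df_{i_1}\wedge\dots\wedge df_{i_n}$. Pulled back to $Y$, it vanishes along $E$ to order at most $\ord_E K_{Y/X}$ plus the orders of the $p^*f_{i_k}$ along $E$. The direction we need is the reverse: generic choices of $f_{i_k}$ realize the minimal vanishing $n\ord_E\mathfrak{a}$ exactly. Writing $p^*f_i=u_i g^{a}$ with $a=\ord_E\mathfrak{a}$ along a generic point of $E$, the form $p^*(df_{i_1}\wedge\cdots)$ should equal $g^{na}\,d u_{i_1}\wedge\cdots$ plus terms of order at least $na-1$. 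I would verify that this gives the displayed inequality.

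The third step connects $\mathrm{Jac}_n(\mathfrak{a})$ with $\mathcal{J}(\mathfrak{a})$. The key result of~\cite{ELSV04} is that $\mathrm{Jac}_n(\mathfrak{a})$ is contained in $\mathfrak{a}^{n}$ times something controlled by the multiplier ideal. More precisely, I would aim to prove
$$\ord_E\mathrm{Jac}_n(\mathfrak{a})\le 2n\,\ord_E\mathfrak{a}-n\,\ord_E\mathcal{J}(\mathfrak{a}),$$
which combined with the second step gives exactly~\eqref{eqn:Bouck}. One route is via the integral closure and the Briançon–Skoda-type inclusion $\mathcal{J}(\mathfrak{a}^{n})\subset\mathfrak{a}$ together with a Jacobian inclusion of the form $\mathcal{J}(\mathfrak{a})\cdot\mathfrak{a}^{n-1}\subset\overline{\mathrm{Jac}}$. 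This third step is the main obstacle: the direction of the inclusion is delicate, and the constant $n$ must come out exactly.

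If the direct Jacobian comparison fails, I would fall back to valuative arguments on a log resolution $\mu:Z\to X$ factoring through $Y$, with $\mathfrak{a}\cdot\mathcal{O}_Z=\mathcal{O}_Z(-D)$. There I would use
$$\ord_E\mathcal{J}(\mathfrak{a})\ge \ord_E D-A_X(\ord_E)\quad\text{for } E \text{ not lying over the center},$$
and restrict to $E$ exceptional for $p$. Since $A_X(\ord_E)=\ord_E K_{Y/X}+1$, the inequality reduces to bounding $\ord_E K_{Y/X}$ by $n$ times a quantity of size at least $A_X-1$. The remaining gap is a factor of $n$, which the Jacobian argument is needed to supply.
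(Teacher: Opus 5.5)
\textbf{There is a genuine gap.} Your overall idea, Jacobian ideals plus the inclusion of \cite{ELSV04}, is the one behind Section 3 of the paper. However, Step 2 is false as stated, and the ingredient that would make a corrected version work is missing.

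\textbf{Step 2 has the wrong sign.} Take $\mathfrak{a}=\mathfrak{m}_x$, so that $Y$ is the blow-up of the point $x$. Then $\ord_E K_{Y/X}=n-1$, $\ord_E\mathfrak{a}=1$, and $\mathrm{Jac}_n(\mathfrak{m}_x)=\mathcal{O}_X$ (take $f_i=x_i$). Your inequality $\ord_E K_{Y/X}\le\ord_E\mathrm{Jac}_n(\mathfrak{a})-n\,\ord_E\mathfrak{a}$ then reads $n-1\le -n$. Dividing by $f_1$ makes $\Omega^n_Y$ larger than $p^*\Omega^n_X$, so $\ord_E\mathfrak{a}$ must enter with a plus sign and the Jacobian term with a minus sign.

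\textbf{Your wedge computation gives the opposite bound.} Since $d(g^{a_i}u_i)=g^{a_i-1}(g\,du_i+a_iu_i\,dg)$ and $dg\wedge dg=0$, every $p^*(df_{i_1}\wedge\dots\wedge df_{i_n})$ vanishes to order at least $n\,\ord_E\mathfrak{a}-1$ along $E$. This gives $\ord_E K_{Y/X}\ge n\,\ord_E\mathfrak{a}-1-\ord_E\mathrm{Jac}_n(\mathfrak{a})$, a \emph{lower} bound on the discrepancy.

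\textbf{The missing ingredient is a non-vanishing statement tied to the Rees property.} To bound $\ord_E K_{Y/X}$ from above, you must show that some suitable $n$-form or section does \emph{not} vanish to higher order at a general point of $E$. This is exactly where the hypothesis that $E$ lies on the normalized blow-up (so $\ord_E$ is a Rees valuation of $\mathfrak{a}$) has to enter. For arbitrary divisors over $X$ the statement is false: take $\mathfrak{a}=\mathfrak{m}_x$ on a surface and $E$ the divisor of the monomial valuation with weights $(1,k)$, $k\ge 3$; then $\ord_E K=k>2=2(\ord_E\mathfrak{a}-\ord_E\mathcal{J}(\mathfrak{a}))$. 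Your proposal never supplies such a statement, and it is not automatic from the blow-up chart. For $\mathfrak{a}=(x^2,y^3)$ the normalized blow-up is the $(3,2)$-weighted blow-up. At a general point $y\in E$, the covectors $dx$, $dy$, $d(y^3/x^2)$ span only a line of $T^*_yY$, so no wedge of the $dT_i$ and $dx_j$ generates $\Omega^2_Y$ there.

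\textbf{Step 3 and the fallback point the wrong way.}
\begin{itemize}
\item Step 3 asks for an \emph{upper} bound on $\ord_E\mathrm{Jac}_n(\mathfrak{a})$. What \cite{ELSV04} provides (after reducing to $\codim V(\mathfrak{a})\ge 2$) is $\mathrm{Jac}(\mathfrak{a})\subset\mathcal{J}(\mathfrak{a})$. Hence $\mathrm{Jac}_n(\mathfrak{a})\subset\mathrm{Jac}(\mathfrak{a})^n\subset\mathcal{J}(\mathfrak{a})^n$, which is the \emph{lower} bound $\ord_E\mathrm{Jac}_n(\mathfrak{a})\ge n\,\ord_E\mathcal{J}(\mathfrak{a})$. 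That is exactly the direction a correctly oriented Step 2, of the form $\ord_E K_{Y/X}\le n\,\ord_E\mathfrak{a}-\ord_E(\text{Jacobian-type ideal})$, would need.
\item The valuative inequality $\ord_E\mathcal{J}(\mathfrak{a})\ge\ord_E\mathfrak{a}-A_X(\ord_E)+1$ says $\ord_E\mathfrak{a}-\ord_E\mathcal{J}(\mathfrak{a})\le\ord_E K_{Y/X}$. So that quantity is at most, not at least, $A_X-1$, and no missing factor of $n$ can turn this into an upper bound on the discrepancy.
\end{itemize}

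\textbf{How the paper proceeds.} Section 3 proves the stronger bound $\ord_E K_{Y/X}\le n(\ord_E\mathfrak{a}-\ord_E\mathrm{Jac}(\mathfrak{a}))$, where $\mathrm{Jac}(\mathfrak{a})$ is generated by the $f_j$ and their first partials. It raises the pointwise estimate $\omega_\varphi\le C\,F^{-1}H\,\omega_{\mathrm{std}}$, for $\varphi=\log\sum_j|f_j|^2$, to the $n$-th power. It then compares with the smooth part $\eta$ of $p^*\omega_\varphi$, used as a reference volume near a general point of $E$. The non-degeneracy of $\eta$ there is the analytic counterpart of what you are missing. It is delicate: in the $(x^2,y^3)$ example, both $p^*\omega$ and the Fubini--Study part are degenerate in the normal direction along $E$.

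The appendix supplies the step cleanly.
\begin{enumerate}
\item On a log resolution $\mu\colon X'\to Y$, choose relative sections of $-D$ with an isolated common zero at a general point of $E$. This uses that $-D$ is $p$-ample and globally generated.
\item Relative Nadel vanishing, twisted by $\mathcal{J}(\mathfrak{b}'^n)$, then shows that $\mathcal{O}_{X'}(K_{X'/X}-n\mu^*D)$ is relatively globally generated at the generic point of the strict transform of $E$.
\item Hence $\ord_E\mathcal{J}(\mathfrak{a}^n)=n\,\ord_E\mathfrak{a}-\ord_E K_{Y/X}$.
\item Subadditivity $\mathcal{J}(\mathfrak{a}^n)\subset\mathcal{J}(\mathfrak{a})^n$ concludes.
\end{enumerate}
The constant $n$ appears because $\mathcal{J}(\mathfrak{b}'^n)$ is non-trivial at the chosen point only for exponent $n=\dim X$, the log canonical threshold of a point.
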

An algebraic proof of this result, due to S. Boucksom, is given in the Appendix~\ref{appendix}. For the more analytic proof given below, we recall that a function $u:X\to \R$ is said to be \emph{quasi-plurisubharmonic} (q-psh) if locally $u\overset{loc}{=}g+\varphi$ for $g\in C^{\infty}$ and $\varphi$ psh. 
Letting $\theta$ be a smooth closed $(1,1)$-form, the set $\PSH(X,\theta)$ denotes all the q-psh functions $u$ such that $\theta_u:=\theta+\ddc u\geq 0$ as $(1,1)$-currents. Here $\dc:= \frac{i}{4\pi}(\bar{\partial}-\partial)$ so that $\ddc=\frac{i}{2\pi}\partial \bar{\partial}$.
\begin{proof}
    The following proof is strongly inspired by the proof of~\cite[Thm.~4.2]{ELSV04}.
    
    Let $\mathfrak{\tilde{a}}\subset \mathcal{O}_X$ be the unique coherent ideal sheaf such that $\codim V(\mathfrak{\tilde{a}})\geq 2$ and such that $\mathfrak{a}=\mathfrak{\tilde{a}}\cdot \mathcal{O}_X(-G)$ for an effective divisor $G$. Then $p:Y\to X$ is the normalized blow-up along $\mathfrak{\tilde{a}}$ and $\mathcal{J}(\mathfrak{a})=\mathcal{J}(\mathfrak{\tilde{a}})\cdot \mathcal{O}_X(-G)$ since $G$ is integral. Thus we can and will suppose that $\codim V(\mathfrak{a})\geq 2$ without loss of generality. Moreover as a consequence of~\cite[Lem.~B.4]{BBJ15} it is enough to prove~\eqref{eqn:Bouck} for $p$-exceptional prime divisors.

    Let $\{U_k\}_k$ be a locally finite open cover of $X$ and let $\{\chi_k\}_k$ be a partition of unity subordinate to the open cover $\{U_k\}_k$. After possibly refining the cover, we may assume that $\mathfrak{a}$ admits local generators $\{f_{k,j}\}_j$ over $U_k$ for any $k$. We then set
    $$
    \varphi:=\log \left(\sum_k \chi_k \sum_j \lvert f_{k,j} \rvert^2 \right),
    $$
    which is a q-psh function. 
    Letting $\om\in c_1(A)$ K\"ahler form for $A$ ample, we can suppose that $\varphi\in \PSH(X,\om)$ unless taking a large multiple of $A$. By construction
    \begin{equation}
        \label{eqn:Above}
            p^*\om_\varphi = \eta + [D]
    \end{equation}
    where $D$ is an effective divisor such that $\mathcal{O}_Y\cdot p^{-1}\mathfrak{a}=\mathcal{O}_Y(-D)$ while $\eta$ is a semipositive closed form representing the divisor $p^*A-D$. As $-D$ is $p$-ample (see for instance~\cite[Lem 1.8]{BHJ17}), we can also suppose $\{\eta\}^{n-1}\cdot E>0$ for a fixed $p$-exceptional prime divisor $E$, unless again replacing $A$ by a large multiple. Thus we can find $y\in E\cap Y^{\mathrm{reg}}$ not belonging to any other $p$-exceptional divisor such that $E\overset{loc}{=}\{w_1=0\}$ locally around $y$ for holomorphic coordinates $(w_1,\dots,w_n)$ and such that $\eta$ is locally a K\"ahler form around $y$. Indeed the positivity of $\eta$ on the tangential directions is given by $\eta_{|E}^{n-1}>0$ locally around $y$ (unless changing $y$), while the positivity on the transversal direction of $E$, possibly replacing $\om$ by $C\om$ in~\eqref{eqn:Above}, follows from $\eta_y(v,\bar{v})\geq c\,\om_{p(y)}\left((dp)_y(v),(dp)_y(\bar{v})\right)>0$, for $c>0$, for any $v$ non-zero vectors not in $T_yE$ since $(dp)_y$ is injective when restricted to normal vectors of $E$.

    Next, we fix holomorphic coordinates $(z_1,\dots,z_n)$ around $x=p(y)$, denoting by $\omega_{\mathrm{std}}$ the standard K\"ahler form in such coordinates. We claim that there exists a constant $C_1>0$ such that
    \begin{equation}
        \label{eqn:Claim}
        \omega_\varphi\leq C_1 F^{-1} H \omega_{\mathrm{std}}
    \end{equation}
    over $V\setminus (V\cap V(\mathfrak{a}))$ where $V$ is small open set containing $x=p(y)$ and where
    $$
    F:=\sum_j \lvert f_j \rvert^2, \quad H:=\sum_j \lvert f_j \rvert^2+ \sum_j \sum_{s=1}^n \left\lvert \frac{\partial f_j}{\partial z_s} \right\rvert^2
    $$ for $\{f_j\}_j $ local generators of $\mathfrak{a}$. We first show how the claim~\eqref{eqn:Claim} would conclude the proof.

    Since $\eta=p^*\omega_\varphi$ as $(1,1)$-forms over $U\setminus (U\cap E)$ for a small open set $U$ containing $y$, combining~\eqref{eqn:Above} with~\eqref{eqn:Claim} we get
    \begin{equation}
        \label{eqn:Bound}
        \eta^n \leq C_1^n (F\circ p)^{-n} (H\circ p)^n p^*\omega_{\mathrm{std}}^n\leq C_2 (F\circ p)^{-n}(H\circ p)^n \lvert  w_1\rvert^{2\ord_E K_{Y/X}}\eta^n
    \end{equation}
    over $U\setminus (U\cap E)$, where we also used the equality $p^*\om^n_{\mathrm{std}}= e^g\lvert w_1\rvert^{2\ord_E K_{Y/X}}\eta^n$ for a smooth function $g$ locally around $y$. 
    Since $\eta$ is K\"ahler locally around $y$, from~\eqref{eqn:Bound} we deduce that
    $$
    n\ord_E (H\circ p) + \ord_E K_{Y/X}-n \ord_E \mathfrak{a}\leq 0
    $$
    as by construction $F\circ p= \lvert w_1\rvert^{2\ord_E \mathfrak{a}}e^h$ for a smooth function $h$. Next, we observe that $\ord_E (H\circ p)=\ord_E \mathrm{Jac}(\mathfrak{a})$ where $\mathrm{Jac}(\mathfrak{a})\subset \mathcal{O}_X$ is the \emph{Jacobian ideal} of $\mathfrak{a}$ (cf.~\cite[Def.~4.1]{ELSV04}). Hence
    \begin{equation}
        \label{eqn:Stronger}
        \ord_E K_{Y/X}\leq n\left(\ord_E \mathfrak{a}-\ord_E \mathrm{Jac}(\mathfrak{a})\right),
    \end{equation}
    and the desired inequality~\eqref{eqn:Bouck} follows from the inclusion $\mathrm{Jac}(\mathfrak{a})\subset \mathcal{J}(\mathfrak{a})$ proved in~\cite[Thm.~4.2]{ELSV04}.
    \smallskip
    
    It remains to prove the claim~\eqref{eqn:Claim}. As
    $
    \ddc \log f = f^{-1}\ddc f-f^{-2}\de f\wedge \dc f\leq f^{-1}\ddc f,
    $
    it follows that
    $$
    \ddc \varphi= e^{-\varphi}\sum_k \ddc\left(\chi_k F_k\right)=e^{-\varphi}\sum_k\left(\underbrace{F_k\ddc \chi_k}_{\mathrm{(I)_k}}+ \underbrace{\de F_k \wedge \dc \chi_k+ \de \chi_k\wedge \dc F_k}_{\mathrm{(II)_k}}+ \underbrace{\chi_k \ddc F_k}_{\mathrm{(III)_k}} \right)
    $$
    where $F_k:=\sum_j\lvert f_{j,k} \rvert^2$. Locally around $x$ we can assume $k=1,\dots,M$. Moreover, setting $H_k=\sum_j\lvert f_{j,k} \rvert^2+ \sum_j\sum_s \lvert\frac{\partial f_{k,j}}{\partial z_s}\rvert^2$, the quantities $F/F_k$ and $H/H_k$ are bounded around $x$ for any such $k=1,\dots,M $. Indeed passing from $F$ to $F_k$ (respectively from $H$ to $H_k$) corresponds to a change of local generators of the ideal $\mathfrak{a}$ (resp. of the Jacobian ideal $\mathrm{Jac}(\mathfrak{a})$). Thus to prove~\eqref{eqn:Claim} we separately check that all three terms $\mathrm{(I)}_k$, $\mathrm{(II)}_k$, $\mathrm{(III)}_k$ can be dominated from above by $H_k\omega_{\mathrm{std}}$.

    Since $H_k\geq F_k$ and $\chi_k$ is smooth, we immediately obtain $\mathrm{(I)}_k\leq C_3 H_k\omega_{\mathrm{std}}$ locally around $x$.
    Regarding $\mathrm{(II)}_k$ we apply the Cauchy-Schwarz inequality twice to obtain
    \begin{multline*}
        \mathrm{(II)}_k=\sum_j d\lvert f_{k,j}\rvert^2 \wedge \dc \chi_k+ \de \chi_k \wedge \dc \lvert f_{k,j} \rvert^2 \leq 2\sum_j \left( \left(\sum_{s=1}^n \left\lvert\frac{\partial \lvert f_{k,j} \rvert^2}{\partial z_s}\right\rvert^2 \right)^{1/2}\left(\sum_{s=1}^n\left\lvert \frac{\partial \chi_k}{\partial z_s}\right\rvert^2 \right)^{1/2}\right) \om_{\mathrm{std}}\\
        \leq C_4 \sum_j\left( \lvert f_{k,j} \rvert \left(\sum_{s=1}^n \left\lvert \frac{\partial f_{k,j}}{\partial z_s} \right\rvert^2\right)^{1/2}\right)\om_{\mathrm{std}}\leq C_5 \left(\sum_j \lvert f_{k,j} \rvert^2\right)^{1/2}\left(\sum_j\sum_{s=1}^n \left\lvert \frac{\partial f_{k,j}}{\partial z_s} \right\rvert^2\right)^{1/2}\om_{\mathrm{std}}\leq C_5 H_k \, \om_{\mathrm{std}}
    \end{multline*}
    around $x$, where we clearly again used that $\chi_k$ is smooth.
    Finally,
    $$
    \mathrm{(III)_k}=\sum_j \chi_k \ddc \lvert f_{k,j} \rvert^2\leq C_6 \sum_j i \partial f_{k,j} \wedge \overline{ \partial f_{k,j}} \leq C_7 \left(\sum_j\sum_{s=1}^n\left\lvert\frac{\partial f_{k,j}}{\partial z_s}\right\rvert^2 \right)\om_{\mathrm{std}}\leq C_7 H_k\, \om_{\mathrm{std}},
    $$
    which as said above gives the claim~\eqref{eqn:Claim} and concludes the proof.
\end{proof}
Note that the slightly stronger inequality \eqref{eqn:Stronger} holds.

\section{Fujita Approximations}\label{sec:4}
In the whole section, $L\to X$ will be a big line bundle on a smooth projective variety $X$. 

\smallskip

We can give the following definition of (Semiample) Fujita Approximations.

\begin{defi}
    The data $(p_m:Y_m\to X, L_m,E_m)$ is said to be a \emph{Fujita Approximation of} $L\to X$ if $p_m:Y_m\to X$ are birational morphisms from smooth projective varieties $Y_m$ such that
    \begin{itemize}
        \item[i)] $p_m^*L=L_m+E_m$ for $E_m$ effective $\Q$-divisors and $L_m$ ample $\Q$-line bundles, for any $m\in\N$;
        \item[ii)] $ L_m^n\longrightarrow \Vol(L)$ as $m\to +\infty$.
    \end{itemize}
    In the case $L_m$ are only semiample $\Q$-line bundles, i.e. suitable multiples of $L_m$ are globally generated, we say that $(p_m:Y_m\to X, L_m, E_m)$ is a \emph{Semiample} Fujita Approximation of $L\to X$.
\end{defi}
Since semiample classes $c_1(L_m)$ are in particular nef classes, the top intersection product $L_m^n$ still captures the volume of the associated line bundles for Semiample Fujita Approximations. Moreover, in the semiample setting there is also no loss of generality in assuming that the morphisms $p_m$ are given as a composition of blow-ups along smooth centers.

Fujita, in his pioneering work~\cite{Fuj94}, showed that any big line bundle admits a Semiample Fujita Approximation given by blowing-up the base ideals associated to the line bundle. Fujita Approximations can then be obtained by a perturbation argument: we will also see this during the proof of Theorem~\ref{thmB} below.

\subsection{Special Fujita Approximations}
Given a big line bundle $L\to X$, the first Riemann-Roch coefficient of $(X,L)$ is defined as
$$
\tau_1(X,L)=\langle L^{n-1}\rangle \cdot  K_X ,
$$
using the same notation as in~\cite[Def.~4.1]{Li21}. 
The discussion in~\cite{Li21} naturally leads to the following definition.
\begin{defi}
    A (Semiample) Fujita Approximation $(p_m:Y_m\to X, L_m,E_m)$ of $L\to X$ is said to be \emph{Special} if
    $$
    \tau_1\left(Y_m,L_m\right)\longrightarrow \tau_1(X,L).
    $$
    as $m\to +\infty$.
\end{defi}
By Projection Formula $\tau_1(Z,q^*F)= \tau_1(Y,F)$ if $q:Z\to Y$ is a birational morphism between smooth projective varieties and $F$ is semiample. Thus, without loss of generality, the morphisms $p_m:Y_m\to X$ of any Special Semiample Fujita Approximation $(p_m:Y_m\to X, L_m,E_m)$ of $L\to X$ can be assumed to be given as compositions of blowups along smooth centers.

\smallskip

The following result has already been partially observed in~\cite[Lem.~4.10]{Li21}.
\begin{lem}
\label{lem:Special_H}
    A Semiample Fujita Approximation $(p_m:Y_m\to X, L_m,E_m)$ of $L\to X$ is Special if and only if \begin{equation*}
        L_m^{n-1}\cdot K_{Y_m/X}\longrightarrow 0 
    \end{equation*}
    as $m\to +\infty$.
\end{lem}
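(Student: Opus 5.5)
We need to show that $\tau_1(Y_m,L_m)-\tau_1(X,L)\to 0$ if and only if $L_m^{n-1}\cdot K_{Y_m/X}\to 0$. Since $L_m$ is semiample, hence nef, its positive intersection product coincides with the ordinary one, so $\tau_1(Y_m,L_m)=L_m^{n-1}\cdot K_{Y_m}$. Writing $K_{Y_m}=p_m^*K_X+K_{Y_m/X}$ gives
$$
\tau_1(Y_m,L_m)=L_m^{n-1}\cdot p_m^*K_X+L_m^{n-1}\cdot K_{Y_m/X}.
$$
So it is enough to prove that
$$
L_m^{n-1}\cdot p_m^*K_X\longrightarrow \langle L^{n-1}\rangle\cdot K_X
$$
for every Semiample Fujita Approximation. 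The lemma then follows by subtracting.

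To prove this convergence, fix an ample $A$ such that $K_X+A$ and $A$ are both very ample, in particular linearly equivalent to effective (even general smooth) divisors. By linearity it suffices to show that $L_m^{n-1}\cdot p_m^*D\to\langle L^{n-1}\rangle\cdot D$ for $D$ effective, with $D=A$ and $D=K_X+A$.

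The upper bound is immediate from the definition recalled in subsection~\ref{ssec:Pos_Int_Prod}. The decomposition $p_m^*L=L_m+E_m$ has $L_m$ nef and $E_m$ effective, so
$$
L_m^{n-1}\cdot p_m^*D\leq\langle L^{n-1}\rangle\cdot D .
$$

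The lower bound is the main step. The plan is to use the volume convergence $L_m^n\to\Vol(L)$ together with the Teissier/Khovanskii-type quantitative estimate of \cite[Thm.~4.1]{BDPP13} (equivalently the differentiability of the volume in \cite{BFJ09}). Concretely, for a nef class $\beta$ with $\beta\le\alpha$ in the sense that $\alpha-\beta$ is effective, and $D$ effective, one has a bound of the form
$$
\bigl(\langle\alpha^{n-1}\rangle\cdot D-\beta^{n-1}\cdot D\bigr)^2\leq C\,(\Vol(\alpha)-\beta^n),
$$
with $C$ depending only on bounds for $\alpha$ and $D$ (for instance on $\alpha\cdot\omega^{n-1}$ and $D\cdot\omega^{n-1}$, with $\omega$ ample). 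Applying this on $Y_m$ with $\alpha=p_m^*L$, $\beta=L_m$ and divisor $p_m^*D$ gives the lower bound, since $\langle p_m^*L^{n-1}\rangle\cdot p_m^*D=\langle L^{n-1}\rangle\cdot D$ and $\Vol(p_m^*L)=\Vol(L)$. The right-hand side tends to $0$, so both pieces converge.

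The point needing care is that the constant $C$ must be uniform in $m$, since the estimate lives on varying models $Y_m$. I would deal with this by running the argument in the Riemann–Zariski space of b-divisors as in \cite{BFJ09}, where the orthogonality/differentiability statement is formulated uniformly. Alternatively, one can note that the constant only involves intersection numbers of $p_m^*L$, $p_m^*D$ and $p_m^*A$, which are independent of $m$ by the projection formula.

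An alternative route I would consider first uses the concavity of the volume. By the mixed-volume (Khovanskii–Teissier) inequalities, the derivative of $t\mapsto\Vol(L+tD)$ at $0$ equals $n\langle L^{n-1}\rangle\cdot D$ \cite{BFJ09}. One can then compare with $(L_m+tp_m^*D)^n$, which is a lower bound for $\Vol(L+tD)$ and a polynomial in $t$. A convexity squeeze, using $L_m^n\to\Vol(L)$, then forces the derivatives at $0$ to converge.
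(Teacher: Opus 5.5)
Your reduction (nefness of $L_m$ giving $\tau_1(Y_m,L_m)=L_m^{n-1}\cdot K_{Y_m}$, the splitting $K_{Y_m}=p_m^*K_X+K_{Y_m/X}$, writing $K_X$ as a difference of very ample divisors) and your upper bound coincide with the paper's. The gap is the lower bound $\liminf_m L_m^{n-1}\cdot p_m^*D\geq\langle L^{n-1}\rangle\cdot D$. This is the only nontrivial content of the lemma, and you do not prove it.

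The inequality you invoke is not \cite[Thm.~4.1]{BDPP13}. That orthogonality estimate bounds $\beta^{n-1}\cdot E$ only for the specific divisor $E=\mu^*\alpha-\beta$. It gives nothing when $E$ is replaced by an arbitrary test divisor $D$, and a square-root estimate is not ``equivalent'' to differentiability of the volume. So the issue is not uniformity of the constant: no argument for the estimate is given at all.

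Your alternative route fails in the direction you need. The bound $(L_m+tp_m^*D)^n\leq\Vol(L+tD)$ holds only for $t\geq0$, where $L_m+tp_m^*D$ is nef. A lower barrier on that side can only bound the slope $nL_m^{n-1}\cdot p_m^*D$ from above: a polynomial whose slope at $0$ is too small stays below $\Vol(L+tD)$ without any contradiction. So that squeeze only reproves the easy inequality. For $t<0$ the polynomial is no longer a lower bound for the volume.

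The missing idea is to work on the side $t<0$ with a linear barrier instead. For $t>0$, use $E_m\geq0$ for the first inequality below and the nefness of $L_m$ and $tp_m^*D$ for the second, via \cite[Lem.~4.2]{BDPP13}:
$\Vol(L-tD)=\Vol(p_m^*L-tp_m^*D)\geq\Vol(L_m-tp_m^*D)\geq L_m^n-ntL_m^{n-1}\cdot p_m^*D$.
With $\varepsilon_m:=\Vol(L)-L_m^n\to0$, this reads $\Vol(L)-\Vol(L-tD)\leq\varepsilon_m+ntL_m^{n-1}\cdot p_m^*D$. Choosing $t_m\to0$ with $\varepsilon_m/t_m\to0$ and applying the differentiability of the volume \cite[Thm.~A]{BFJ09} gives the lower bound. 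This is exactly the paper's proof.

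Your quantitative estimate is in fact true, but it must be derived this way rather than quoted. Set $c:=\langle L^{n-1}\rangle\cdot D$. Concavity of $\Vol^{1/n}$ gives the tangent-line bound $\Vol(L-tD)\leq\Vol(L)-ntc+\binom{n}{2}t^2c^2/\Vol(L)$ for $0\leq t\leq\Vol(L)/c$. Combining this with the inequality above and optimizing in $t$ yields $(c-L_m^{n-1}\cdot p_m^*D)^2\leq\frac{2(n-1)c^2}{n\Vol(L)}\,\varepsilon_m$. The constant here is visibly independent of $m$.
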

\begin{proof}
    As $L_m\to Y_m$ are semiample, $L_m^{n-1}=\langle L_m^{n-1}\rangle$. Thus
    $$
    L_m^{n-1}\cdot K_{Y_m/X}= \tau_1(Y_m,L_m)-L_m^{n-1}\cdot p_m^*K_X,
    $$
    and to conclude the proof it remains to show that
    $
    L_m^{n-1}\cdot p_m^*K_X\rightarrow \tau_1(X,L)
    $
    as $m\to +\infty$. Moreover, writing $K_X=A-B$ as a difference of ample divisors, it is enough to check that $L_m^{n-1}\cdot p_m^*A\rightarrow \langle L^{n-1}\rangle\cdot A$ as $m\to +\infty$ for $A$ ample divisor.
    
    By definition we immediately have $\limsup_{m\to +\infty}L_m^{n-1}\cdot p_m^* A\leq \langle L^{n-1}\rangle\cdot A$ (see subsection~\ref{ssec:Pos_Int_Prod}). Vice versa, as $p_m^*(L-tA)\geq L_m-tp_m^*A$ for any $m\in \N$ and any $t>0$, we obtain
    \begin{align*}
        \Vol(L)-\Vol(L-tA)\leq \underbrace{\Vol(L)-\Vol(L_m)}_{=:\varepsilon_m} + \Vol(L_m)- \Vol(L_m - tp_m^*A)
        \leq\varepsilon_m +nt \, L_m^{n-1}\cdot p_m^*A
    \end{align*}
    where the last inequality is the content of~\cite[Lem.~4.2]{BDPP13} as $L_m, tp_m^*A$ are nef. Since by hypothesis $\varepsilon_m \to 0$ as $m\to +\infty$, we can consider a sequence $\{t_m\}_m$ converging to $0$ such that $\varepsilon_m/t_m\to 0$, to get
    $$
    \limsup_{m\to +\infty}\frac{\Vol(L)-\Vol(L-t_mA)}{nt_m}\leq \liminf_{m\to+\infty} L_m^{n-1}\cdot p_m^*A.
    $$
    Thus the desired inequality $\langle L^{n-1} \rangle\cdot A\leq \liminf_{m\to +\infty}L_m^{n-1}\cdot p_m^*A $ follows from~\cite[Thm.~A]{BFJ09}, concluding the proof.
\end{proof}

\section{Existence of Special Fujita Approximations}\label{sec:5}
In this section we will prove Theorem~\ref{thmB}, solving in particular~\cite[Conj.~4.7]{Li21} as stated in the Introduction. We will also see that the considered morphisms $p_m:Y_m\to X$ in Theorem~\ref{thmB} can be taken to be compositions of blow-ups along ($\G$-invariant) smooth centers such that the exceptional loci are simple normal crossing. Indeed, they will be constructed as log resolutions of the base ideals associated with $L$.
\medskip

    The author thanks S. Boucksom for suggesting the following algebraic proof of Theorem~\ref{thmB} after the first arXiv version of the manuscript.
    \begin{proof}[Proof of Theorem~\ref{thmB}] Letting $p:Y\to X$ be a fixed birational morphism, we have $\mathbf{B}_+(p^*L)=p^{-1}\left(\mathbf{B}_+(L)\right)\cup \Exc(p)$ by~\cite[Prop.~2.3]{BBP13}. In particular, $\langle (p^*L)^{n-1}\rangle\cdot K_{Y/X}=0$ by Theorem~\ref{thm:Orthogonality} as $\mathrm{Supp}( K_{Y/X})\subset \Exc(p)$. Thus, without loss of generality, unless considering $p^*L$ for a suitable birational morphism $p:Y\to X$, we can and will assume that $\mathbf{B}_+(L)=\mathrm{Supp}(E)$ for an effective $\Q$-divisor $E$ such that $A:=L-E$ is an ample $\Q$-line bundle.
    
    \textbf{Step 1: Enough to prove the existence of Special Semiample Fujita Approximations.}    
    We want to perturb a given Special Semiample Fujita Approximation $(p_m:Y_m\to X, L_m,E_m)$ of $L\to X$ to produce a Special Fujita Approximation. Without loss of generality, we can and will suppose that $p_m$ is given as a composition of blowups along smooth centers. In particular, there are $p_m$-exceptional effective $\Q$-divisors $F_m$ such that $p_m^*A-F_m$ are $\Q$-ample classes.
    A simple calculation gives
    \begin{align*}
        p_m^*L&= a(L_m+E_m)+ (1-a)\left(p_m^*A +p_m^*E\right)\\
        &= a L_m + (1-a)(p_m^*A-F_m) + aE_m+ (1-a)p^*_m E + (1-a) F_m
    \end{align*}
    for any $a\in \Q \cap (0,1)$. Set $L_{m,a}:=aL_m+(1-a)(p_m^*A-F_m)$ and $E_{m,a}:= aE_m+ (1-a)p^*_m E + (1-a) F_m$, noting that $L_{m,a}$ are $\Q$-ample line bundles while $E_{m,a}$ are effective $\Q$-divisors.
    By continuity we can then choose $a_m\in(0,1)\cap \Q$ so that
    \begin{gather*}
        L_{m,a_m}^n\geq L_m^n-\frac{1}{m},\quad \quad
        L_{m,a_m}^{n-1}\cdot K_{Y_m/X}\leq L_m^{n-1}\cdot K_{Y_m/X}+\frac{1}{m}
    \end{gather*}
    for any $m\in\N$. It follows from Lemma~\ref{lem:Special_H} that $(p_m:Y_m\to X, L_{m,a_m}, E_{m,a_m})$ is a Special Fujita Approximation of $L\to X$.  

    \textbf{Step 2: Comparing base ideals and asymptotic multiplier ideal sheaves.}
    In this step we want to prove that
    \begin{equation}
        \label{eqn:AB2}
        \mathfrak{b}_m \cdot \mathfrak{c}^2\subset \mathfrak{a}_m
    \end{equation}
    for any $m$ large, where $\{\mathfrak{a}_m\}_{m\in\N}, \{\mathfrak{b}_m\}_{m\in\N}$ and $\mathfrak{c}$ are suitable ideal sheaves we are going to introduce.
    
    We start by letting $\mathfrak{a}_m:=\mathfrak{b}\left(\left| mL\right|\right)$ be the sequence of base ideals of $L$, and we set $\mathfrak{b}_m:=\mathcal{J}\big(\mathfrak{a}_{\bullet}^m\big)=\mathcal{J}\big(m\cdot \lVert L \rVert\big)=\mathcal{J}\big(\lVert mL \rVert\big)$ for the multiplier ideal sheaves of the graded families of ideals $\mathfrak{a}_{\bullet}^m$ (see subsection~\ref{ssec:Ideals}). 

    Next, we claim that there exists $m_0\in\N$ such that $\mathcal{F}_m:=O_X(mL+m_0A)\otimes \mathfrak{b}_m$ is globally generated. Indeed, picking $r\in\N$ such that $rA$ is a very ample divisor, the claim would follow by Castelnuovo-Mumford regularity~\cite[Thm.~1.8.5]{Laz04} if we proved that $H^i\left(X,\mathcal{F}_m\otimes O_X(-irA)\right)=0$ for any $i=1,\dots,n$. But, writing $\mathcal{F}_m\otimes O_X(-irA)=O_X(K_X+mL+P_i)\otimes \mathfrak{b}_m$ for $P_i=(m_0-ir)A-K_X$ and choosing  $m_0$ so that $P_i$ is an ample divisor for any $i=1,\dots,n$, the desired vanishings are consequences of the asymptotic Nadel's vanishing Theorem~\cite[Thm.~11.2.12]{Laz04II}. We can clearly also suppose that $m_0$ is such that $m_0E, m_0A$ are divisors.

    As $mL+m_0A=(m+m_0)L-m_0E$, from the global generation of $\mathcal{F}_m$ we deduce that
    \begin{equation}
        \label{eqn:AB1}
        \mathcal{O}_X(-m_0E)\cdot \mathfrak{b}_m\subset \mathfrak{a}_{m+m_0} 
    \end{equation}
    for any $m\in\N$. Set $\mathfrak{c}:=\mathcal{O}_X(-m_0E)$ and also recall that
    \begin{equation}
        \label{eqn:AB3}
        \mathfrak{a}_m\subset \mathcal{J}\big(\mathfrak{a}_m\big)\subset \mathfrak{b}_m
    \end{equation}
    (see~\cite[Lem.~B.4]{BBJ15}). Thus, combining~\eqref{eqn:AB1},~\eqref{eqn:AB3} with the subadditivity of multiplier ideal sheaves (\cite{DEL00}, see also~\cite[Thm.~11.2.3]{Laz04II}), it follows that
    \begin{align*}
        \mathfrak{b}_m \cdot \mathfrak{c}^2\subset \mathfrak{b}_{2m_0} \cdot \mathfrak{b}_{m-2m_0}\cdot  \mathfrak{c}^2 \subset \mathfrak{b}_{2m_0} \cdot \mathfrak{a}_{m-m_0} \cdot \mathfrak{c}\subset  \mathfrak{b}_{2m_0} \cdot \mathfrak{b}_{m-m_0} \cdot \mathfrak{c} \subset \mathfrak{b}_{2m_0}\cdot \mathfrak{a}_{m}\subset \mathfrak{a}_m
    \end{align*}
    for any $m\geq 2m_0$.

    \textbf{Step 3: The chosen Special Semiample Fujita Approximation.}
    Consider the Semiample Fujita Approximation $(p_m:Y_m\to X, L_m,E_m)$ given by taking log resolutions of the base ideals $\mathfrak{a}_m:=\mathfrak{b}\left(\left|mL\right|\right)$. By the universal property of the normalized blowup, we have the commutative diagram
    \[\begin{tikzcd}
    Y_m \arrow{rr}{q_m} \arrow[swap]{dr}{p_m} & & Z_m \arrow{dl}{\pi_m} \\
    & X &
    \end{tikzcd}
    \]
    where $\pi_m:Z_m\to X$ is the normalized blow-up along $\mathfrak{a}_m$.
    From~\eqref{eqn:AB2},~\eqref{eqn:AB3} and with the same notation, we know that
    $$
    \ord_F \mathfrak{a}_m - \ord_F \mathcal{J}\left(\mathfrak{a}_m\right)\leq \ord_F \mathfrak{a}_m - \ord_F \mathfrak{b}_m\leq 2 \ord_F \mathfrak{c}
    $$
    for any $m\geq 2 m_0$ and for any prime divisor $F$ over $X$ (i.e. $F\subset Y$ prime divisor on a normal projective variety $Y$ that dominates $X$ through a birational morphism $p:Y\to X$). Thus Proposition~\ref{thm:Discrepancies_Bouck} yields
    \begin{equation}
        \label{eqn:Mat1}
        L_m^{n-1}\cdot q_m^*K_{Z_m/X}\leq 2n m_0\, L_m^{n-1}\cdot q_m^*\pi_m^* E= 2n m_0\, L_m^{n-1}\cdot p_m^* E
    \end{equation}
    for any $m\geq 2 m_0$. Next, by construction, we also have $ E_m=q_m^*\hat{E}_m$ for an effective $\Q$-divisor $\hat{E}_m$ such that $ \mathcal{O}_{Z_m}\cdot \pi_m^{-1}\mathfrak{a}_m=\mathcal{O}_{Z_m}(-m\hat{E}_m) $. We deduce that
    $$
    L_m= p_m^*L - E_m = q_m^*\left(\pi_m^* L -\hat{E}_m\right),
    $$
    thus by Projection Formula it follows that 
    \begin{equation}
        \label{eqn:Mat2}
        L_m^{n-1}\cdot K_{Y_m/Z_m}=0
    \end{equation}
    for any $m\geq 2m_0$. Therefore~\eqref{eqn:Mat1} and~\eqref{eqn:Mat2} lead to
    $$
    \limsup_{m\to+\infty}L_m^{n-1}\cdot K_{Y_m/X}=\limsup_{m\to+\infty}L_m^{n-1}\cdot q_m^*K_{Z_m/X}\leq 2nm_0\, \limsup_{m\to+\infty} L_m^{n-1}\cdot p_m^*E\leq 2nm_0 \langle L^{n-1}\rangle\cdot E=0,
    $$
    where the last equality is given by Theorem~\ref{thm:Orthogonality} as $\mathbf{B}_+(L)=\mathrm{Supp}(E)$. Hence, the chosen Semiample Fujita Approximation is Special thanks to Lemma~\ref{lem:Special_H}.
        
    \textbf{Step 4: the $\G$-equivariant case.}
    Let us now assume that $\G=K_\C$ is a reductive Lie group, $K$ compact Lie group, and that $L$ is $\G$-linearized. Unless considering $p^*L$ for a suitable $\G$-equivariant morphism $ p:Y\to X$, we can assume that $\mathbf{B}_+(L)=\mathrm{Supp}(E)$ for a $\G$-invariant $\Q$-divisor $E$ such that $A=L-E$ is an ample $\G$-linearized $\Q$-line bundle. Moreover, as the base ideals $\mathfrak{a}_m=\mathfrak{b}(|mL|)$ are $\G$-invariant, we can consider $\G$-equivariant log resolutions $p_m:Y_m\to X$ of $\mathfrak{a}_m$ to obtain a Special Semiample Fujita Approximation $(p_m:Y_m\to X, L_m, E_m)$ such that the semiample $\Q$-line bundles $L_m$ are $\G$-linearized and the effective $\Q$-divisors $E_m$ are $\G$-invariant.
    
    Finally it remains to show that we can perform the perturbation argument of the beginning of the proof in a $\G$-equivariant way. Since $p_m$ can be assumed to be given by a finite sequence of blowups along $\G$-invariant smooth centers, there exist $\G$-invariant $p_m$-exceptional effective $\Q$-divisors $F_m$ such that $p_m^*A-F_m$ are $\G$-linearized ample $\Q$-divisors. In particular, the perturbed $\Q$-line bundles $L_{m,a}=aL_m+(1-a)(p_m^*A-F_m)$ of Step 1 are still $\G$-linearized, and similarly the perturbed effective $\Q$-divisors $E_{m,a}=aE_m + (1-a)p_m^*E+(1-a) F_m$ are still $\G$-invariant. Hence, the same argument of Step 1 applies and concludes the proof.
    \end{proof}

    \section{Solution to the Yau-Tian-Donaldson Conjecture}\label{sec:6}

    As explained in the Introduction, Theorem~\ref{thmB} is known to imply solutions to the Boucksom-Jonsson Regularization Conjecture, i.e. Corollary~\ref{corA}, and to the Yau-Tian-Donaldson Conjecture, i.e. Theorem~\ref{thmA}. These results, Corollary~\ref{corB} and Theorem~\ref{thmC} are the contents of this section.
    \smallskip

    \begin{proof}[Proof of Theorem~\ref{thmA} and Corollary~\ref{corA}]
        As said in the Introduction and seen in Theorem~\ref{thm:YTD_F}, Corollary~\ref{corA} implies Theorem~\ref{thmA}. Moreover, by what is recalled in subsection~\ref{ssec:RC}, it is enough to prove Conjecture~\ref{conj:RC} to get Corollary~\ref{corA}. Let then $\G\subset \Aut^\circ(X,L)$ be a reductive Lie group and let $(\cX,\cL)$ be a smooth, dominating and simple normal crossing $\G$-equivariant big model. 
        We denote by $p:\cX\to X\times \P^1$ the $\C^*, \G$-equivariant dominating morphism to the trivial test configuration $X\times \P^1$ for $X$, while we denote by $\pi:\cX\to \P^1$ the $\C^*$-equivariant map given by $\pi=p_2\circ p$. Observe that, since $\cX_{|\pi^{-1}(\P^1\setminus\{0\})}\overset{p}{\simeq} X\times (\P^1\setminus\{0\})$, we can assume that $p$ is given by a sequence of blowups along $\C^*,\G$-invariant smooth centers whose images in $X\times \P^1$ are contained in the central fiber, unless considering an equivalent test configuration. In particular $\cX$ is endowed with a $\C^*,\G$-linearized ample line bundle $\mathcal{A}=p^*p_1^*A-\sum_j a_j \mathcal{F}_j$ where the $\C^*,\G$-invariant $p$-exceptional divisors $\mathcal{F}_j$ have support in $\cX_0$. We can then apply Theorem~\ref{thmB} to get a Special Ample Fujita Approximation that is $\G$-equivariant. Namely, there exist $\G$-equivariant morphisms $p_m:\cY_m\to \cX$, ample $\G$-linearized $\Q$-line bundles $\cL_m$ and $\G$-invariant effective $\Q$-divisors $\mathcal{E}_m$ such that
        $$
        p_m^*\cL=\cL_m+\mathcal{E}_m\quad \quad \cL_m^n\longrightarrow \Vol(L) \quad \quad \tau_1(\cY_m,\cL_m)\longrightarrow \tau_1(\cX,\cL)
        $$
        as $m\to +\infty$. It remains to check that a subsequence of $\{(\cY_m,\cL_m)\}_m$ is given by test configurations. Following the proof of Theorem~\ref{thmB} we observe that the morphisms $p_m$ can be taken to be $\C^*,\G$-equivariant log resolution of $\C^*,\G$-invariant ideals $\mathcal{I}_m$. 
        In particular the $\Q$-line bundles $\cL_m$ are $\C^*, \G$-linearized and the $\Q$-divisors $\mathcal{E}_m$ are $\C^*,\G$-invariant. Therefore to prove that $(\cY_m,\cL_m)$ are ample test configurations for $(X,L)$ it is enough to show that the support of $\mathcal{I}_m$ is contained in the central fiber $\cX_0$. But by construction $V(\mathcal{I}_m)\subset \mathbf{B}_+(\mathcal{L})=\mathbf{B}(m\cL-\cA)$ where the equality is valid for any $m\gg 1 $ large enough. 
        Unless replacing $\cL$ by $\cL+c\cX_0$ for a large $c\gg 0$ we obtain that $\mathbf{B}_+(\cL)\subset \cX_0$ as $(\cX,\cL)_{|\pi^{-1}\left(\P^1\setminus \{0\}\right)}\simeq \left(X\times \P^1\setminus \{0\}, p_1^*L\right)$ and $L$ is ample. Thus, the aforementioned ideals $\mathcal{I}_m$ have support in $\cX_0$. As said above, this finishes the proof.
    \end{proof}

    We can now prove Corollary~\ref{corB}.
    \begin{proof}[Proof of Corollary~\ref{corB}]
    We already know that $(iv)\Rightarrow (iii)\Rightarrow(i)$ and Corollary~\ref{corA} gives $(i)\Rightarrow(iv)$.
    By definition we also have $(v)\Rightarrow (ii)$. Moreover Theorem~\ref{thmB} naturally leads to a non-equivariant version of Corollary~\ref{corA}. The latter, together with~\cite[Prop.~6.3]{Li22b} (see also~\cite[Prop.~7.14]{BJ26}), leads to $(ii)\Rightarrow (v)$ by definition of the \emph{Non-Archimedean Mabuchi functional} (see also~\cite[Sec.~5.5]{BJ23II}).
    Theorem~\ref{thmA} and~\cite[Thm.~A]{BJ26} then gives that $(i)\Leftrightarrow (v)\Leftrightarrow (vi)$.
    Finally, in the case of discrete automorphism group~\cite[Thm.~1.1]{DZ26} and again Theorem~\ref{thmA} yield $(vii)\Leftrightarrow (i)$, concluding the proof.
    \end{proof}

    We conclude the paper by providing a proof of Theorem~\ref{thmC}.
    \begin{proof}[Proof of Theorem~\ref{thmC}]
        As said in the Introduction it is enough to prove that $T_\C$-uniform $(v,w)$-weighted relative $K$-stability implies the analogous notion for models. Let then $(\cX,\cL)$ be a smooth, dominating and simple normal crossing $T_\C$-equivariant big model for $(X,L)$. As seen in the proof of Theorem~\ref{thmA} we can provide a Special Fujita Approximation of $(\cX,\cL)$ given by $T_\C$-equivariant morphisms $p_m:\cY_m\to \cX$, ample $T_\C$-linearized $\Q$-line bundles $\cL_m$ and $T_\C$-invariant effective $\Q$-divisors $\mathcal{E}_m$. Moreover $(\cY_m,\cL_m)$ can be constructed as a sequence of $T_\C$-equivariant smooth ample test configurations of $(X,L)$. To conclude the proof we then need to show that the $(v,w)$-\emph{weighted relative Non-Archimedean Mabuchi functional} is continuous with respect to this approximation. More precisely, we know that the associated Non-Archimedean metrics are strongly continuous in the Boucksom-Jonsson's sense by~\cite[Lem.~4.5]{Li21}. Together with \cite[Lem.~3.7,~Lem.~3.10,~Cor.~3.13]{HL25b} this implies that it is enough to prove that 
        $$
        \mathrm{H}^{\NA}_v(\cY_m,\cL_m)\longrightarrow \mathrm{H}^{\NA}_v(\cX,\cL)
        $$
        as $m\to +\infty$, where $\mathrm{H}^{\NA}_v$ is the $v$-weighted Non-Archimedean entropy introduced in~\cite[eqn.~(3.27)]{HL25b}. Following the notations in~\cite{HL25b}, unless considering a base change of the original big test configuration, by~\cite[Prop.~4.1]{HL25b} we have
        $$
        \mathrm{H}^{\NA}_v(\cY_m,\cL_m)=\big(\cL_m^n\cdot K_{\cY_m/X_{\P^1}}\big)_v, \,\,\,\,\mathrm{H}^{\NA}_v(\cX,\cL)=\langle\cL^n\rangle_v\cdot K_{\cX/X_{\P^1}}:=\sup_{k\to +\infty}\big(\tilde{\cL}_k^n\cdot \rho_k^*K_{\cX/X_{\P^1}}\big)_v
        $$
        where the intersection products are twisted by the presence of the weight $v$ and where $(\tilde{\cY}_k,\tilde{\cL}_k)$ are the test configurations obtained by blowing-up the base ideals associated to $k\cL$. We refer to~\cite{HL25b} for the precise definition of the product $(\cdots)_v$. Here we just recall that $(\cF^n, \cdot)_v$ acts as integration against the Non-Archimedean $v$-weighted Monge-Ampère measure $\mathrm{MA}^{\NA}_v(\phi_{\cF})$ associated to an ample test configuration $(\cZ,\cF)$. For our purposes we deduce that $(\cdots)_v$ is linear in the last entry and that
        \begin{equation}
            \label{eqn:FFF}
            \big(\cL_m^n\cdot p_m^*K_{\cX/X_{\P^1}}\big)_v\longrightarrow \langle\cL^n\rangle_v\cdot K_{\cX/X_{\P^1}} 
        \end{equation}
        as $m\to +\infty$. Indeed, as said before the Non-Archimedean metrics $\phi_{\cL_m}$ strongly converges to the Non-Archimedean metric associated to $\left(\cX,\cL\right)$ and the Non-Archimedean $v$-weighted Monge-Ampère operator is strongly continuous (cf.~\cite[Thm.~3.20]{HL25b}). On the other hand by~\cite[Rem.~4.2]{HL25b} there exists a constant $C>0$ such that
        \begin{equation}
            \label{eqn:FFFF}
            (\cL_m^n\cdot K_{\cY_m/\cX})_v\leq C (\cL_m^n\cdot K_{\cY_m/\cX})\longrightarrow 0.
        \end{equation}
        Combining~\eqref{eqn:FFF},~\eqref{eqn:FFFF} it follows that
        \begin{equation*}
            \mathrm{H}^{\NA}_v(\cY_m,\cL_m) = (\cL_m^n\cdot K_{\cY_m/\cX})_v+ (\cL_m^n\cdot p_m^*K_{\cX/X_{\P^1}})_v\longrightarrow \langle\cL^n\rangle_v\cdot K_{\cX/X_{\P^1}} =\mathrm{H}^{\NA}_v(\cX,\cL)
        \end{equation*}
        which concludes the proof.
    \end{proof}
    
    \appendix

    \section{On the entropy approximation conjecture\\
    {\small (by S. Boucksom)}} 
    \label{appendix}
    
    The \emph{entropy approximation conjecture} (aka \emph{regularization conjecture} in the main body of the paper), which originates in~\cite[Conj.~2.5]{BJ18} (see also~\cite[Conj.~4.4]{Li21}, \cite[Conj.~5.12]{BoJ25a}) states that the non-Archimedean entropy functional is the greatest lower semicontinuous (lsc) extension of its restriction to the subspace of Fubini--Study metrics. It was inspired by its complex analytic counterpart, established in~\cite{BDL17}. 
    
    One of the main results of the present paper (Corollary~\ref{corA}) settles this conjecture over the field $\C$ equipped with the trivial valuation. 

    The primary purpose of this appendix is to provide a simple, purely algebro-geometric proof of the discrepancy estimate for Rees valuations (Proposition~\ref{thm:Discrepancies_Bouck}), based on Nadel vanishing and valid over any algebraically closed field $k$ of characteristic $0$. We then apply this to establish the entropy approximation conjecture over the field of Laurent series $k\lau{t}$, a case of relevance for instance in the study of $K$-stability with respect to arcs~\cite{DR24}. 

    \subsection{The discrepancy estimate for Rees valuations} 
    In what follows, $\cX$ denotes a smooth algebraic variety over an algebraically closed field $k$ of characteristic $0$. Slightly generalizing Proposition~\ref{thm:Discrepancies_Bouck} (in which $k=\C$), we will show: 

    \begin{prop}\label{prop:key}
    Pick an ideal\footnote{By this we mean a coherent ideal sheaf.} $\fa\subset\cO_\cX$, and a Rees valuation $v$ of $\fa$, \ie $v=\ord_E$ for a prime divisor $E$ in the support of the exceptional divisor of the normalized blowup $\rho\colon \tcX\to \cX$ of $\fa$, with discrepancy $a(v)=v(K_{\tcX/\cX})$. Then 
    $$
    a(v)\le(\dim\cX)\left(v(\fa)-v(\cJ(\fa))\right), 
    $$
    with $\cJ(\fa)\subset\cO_\cX$ the multiplier ideal of $\fa$.
    \end{prop}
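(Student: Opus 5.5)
The plan is to replace the analytic Jacobian-ideal estimate by a global-generation statement on the normalized blowup, obtained from relative Nadel vanishing and Castelnuovo--Mumford regularity. I would then transfer the generation statement to multiplier ideals of powers of $\fa$ and use Skoda's theorem. The question is local on $\cX$, so I assume $\cX$ affine. Write $\cO_{\tcX}\cdot\rho^{-1}\fa=\cO_{\tcX}(-D)$, where $D$ is Cartier and $-D$ is $\rho$-ample. Fix a log resolution $\mu\colon Y\to\tcX$ such that $\pi=\rho\circ\mu$ is a log resolution of $\fa$. For $k\ge 0$, set
$$
\mathcal{G}_k:=\mu_*\cO_Y\bigl(K_{Y/\cX}-(k+1)\mu^*D\bigr),
$$
a torsion-free sheaf of rank one on $\tcX$. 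By definition $\rho_*\mathcal{G}_k=\cJ(\fa^{k+1})$, and $\mathcal{G}_k=\mathcal{G}_0\otimes\cO_{\tcX}(-kD)$ by the projection formula. At the generic point of $E$ the variety $\tcX$ is smooth and $\mu$ is an isomorphism, so $\mathcal{G}_k$ is there generated by a local section of $E$-order $(k+1)v(\fa)-a(v)$.

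First I would establish the vanishing
$$
R^i\rho_*\bigl(\mathcal{G}_0\otimes\cO_{\tcX}(-jD)\bigr)=0\quad\text{for all } i>0,\ j\ge 0.
$$
This follows from relative Kawamata--Viehweg/Nadel vanishing for $\pi$, using that $-(j+1)\mu^*D$ is $\mu$-nef and $\rho$-big. It also needs Leray together with local vanishing $R^i\mu_*\cO_Y(K_{Y/\cX}-(j+1)\mu^*D)=0$, which is valid in characteristic $0$. Next, the fibres of $\rho$ have dimension at most $n-1$. Relative Castelnuovo--Mumford regularity with respect to a $\rho$-very ample multiple of $-D$ then shows that $\mathcal{G}_0\otimes\cO(-mD)$ is $\rho$-globally generated for $m$ of order $n-1$ (or $n$). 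This is the step where the factor $\dim\cX$ enters, and the twist must be tracked honestly. Global generation at the generic point of $E$ gives
$$
v\bigl(\cJ(\fa^{m+1})\bigr)=(m+1)v(\fa)-a(v).
$$

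Finally I would use Skoda's theorem, $\cJ(\fa^{\ell})=\fa\cdot\cJ(\fa^{\ell-1})$ for $\ell\ge n$, together with the trivial inclusions $\fa^{\ell-1}\cJ(\fa)\subset\cJ(\fa^{\ell})$. These give
$$
v\bigl(\cJ(\fa^{m+1})\bigr)\le m\,v(\fa)+v(\cJ(\fa)).
$$
Combining with the previous display gives $a(v)\le c\,(v(\fa)-v(\cJ(\fa)))$, with a constant $c$ coming from the regularity twist. The target is $c=n$.

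The main obstacle is the regularity step. First, $-D$ is only $\rho$-ample, so I must pass to a very ample multiple $-rD$ and check that the twists needed for regularity are not multiplied by $r$. If they are, I would instead run regularity on each residue class modulo $r$, or use the Fujita-type vanishing available for nef twists. Second, I must make sure the vanishing covers every negative twist that regularity requires, namely $\mathcal{G}_0\otimes\cO((i-m)D)$ for $1\le i\le n-1$. If the bookkeeping produces only an inequality with a worse constant, I would fall back on the weaker Jacobian route. In that route one proves $v(\mathrm{Jac}(\fa))\ge v(\fa)-a(v)/n$ algebraically from the Rees description of $\tcX$, and then concludes with $\mathrm{Jac}(\fa)\subset\cJ(\fa)$ from~\cite{ELSV04}, as in the proof of Proposition~\ref{thm:Discrepancies_Bouck}.
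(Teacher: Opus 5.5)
The gap is in your final step: the inequality runs the wrong way. The inclusion $\fa^{m}\cJ(\fa)\subset\cJ(\fa^{m+1})$ gives an \emph{upper} bound $v(\cJ(\fa^{m+1}))\le m\,v(\fa)+v(\cJ(\fa))$. Inserting $v(\cJ(\fa^{m+1}))=(m+1)v(\fa)-a(v)$ then gives
$$
a(v)\ \ge\ v(\fa)-v(\cJ(\fa)).
$$
This is the reverse of the desired inequality, and it is trivial anyway: it only says that elements of $\cJ(\fa)=\pi_*\cO_Y(K_{Y/\cX}-\mu^*D)$ vanish to order at least $v(\fa)-a(v)$ along $E$. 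Note also that $m$ cancels, so no constant $c$ can come out of this combination, whatever the regularity twist is. Skoda's theorem, paired only with inclusions of the form $\fa\,\cJ(\fa^{\ell-1})\subset\cJ(\fa^{\ell})$, can never reverse this direction. Your fallback has the same sign problem. To conclude from $\mathrm{Jac}(\fa)\subset\cJ(\fa)$ you need the upper bound $v(\mathrm{Jac}(\fa))\le v(\fa)-a(v)/n$, which is what the paper's analytic proof establishes, not a lower bound.

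What is missing is a \emph{lower} bound on $v(\cJ(\fa^k))$ in terms of $v(\cJ(\fa))$. This is the subadditivity theorem $\cJ(\fa^k)\subset\cJ(\fa)^k$, which gives $k\,v(\cJ(\fa))\le v(\cJ(\fa^k))$. Combined with an equality $v(\cJ(\fa^k))=k\,v(\fa)-a(v)$, it yields $a(v)\le k\,(v(\fa)-v(\cJ(\fa)))$. So the constant is exactly the exponent $k$ at which you can prove global generation of $\cO_Y(K_{Y/\cX}-k\mu^*D)$ at the generic point of the strict transform $E'$ of $E$.

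The paper does this for $k=\dim\cX$ using relative Nadel vanishing for $\cO_{\cX'}(K_{\cX'/\cX}-n\mu^*D)\otimes\cJ(\fb'^n)$. Here $\fb'$ is generated by relative sections of $-\mu^*D$ with an isolated common zero at a general point $p'\in E'$, so $\cJ(\fb'^n)$ cuts out $p'$ as an isolated point.

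Your regularity step can serve the same purpose. The sheaf $\cO_{\tcX}(-D)$ is itself $\rho$-ample and $\rho$-globally generated, so Mumford's theorem applies to it directly, with no passage to $-rD$. Your vanishing for all $j\ge 0$, together with fibre dimension at most $n-1$, then shows that $\mathcal{G}_{n-1}$ is $\rho$-globally generated. That gives $v(\cJ(\fa^n))=n\,v(\fa)-a(v)$, and subadditivity finishes the proof.

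Alternatively, use Skoda as an \emph{equality}: it shows that $\ell\mapsto v(\cJ(\fa^\ell))-\ell\,v(\fa)$ is constant for $\ell\ge n-1$. Plain relative Serre global generation for $\ell\gg 0$ then already gives $v(\cJ(\fa^{n-1}))=(n-1)v(\fa)-a(v)$. Subadditivity then yields the bound, even with $n-1$ in place of $n$.
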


    \begin{proof}
    Let $D$ be the exceptional divisor of the normalized blowup, \ie the effective Cartier divisor on $\tcX$ such that $\fa\cdot\cO_{\tcX}=\cO_{\tcX}(-D)$. Since $D$ is the pullback of the exceptional divisor of the blowup of $\fa$ under the (finite) normalization morphism, $-D$ is relatively ample and globally generated over $\cX$. 

    Pick a log resolution $\pi\colon \cX'\to \cX$ of $\fa$ that factors through a projective birational morphism $\mu\colon \cX'\to \tcX$. Setting $m:=\dim\cX$, we then have by definition of multiplier ideals
    $$
    \cJ(\fa^m)=\pi_\star\cF,\quad\cF:=\cO_{\cX'}\left(K_{\cX'/\cX}-m\mu^\star D\right). 
    $$
    Given a prime divisor $E$ in the support of $D$, with strict transform $E'\subset \cX'$, we claim that $\cF$ is relatively globally generated over $\cX$ at the generic point of $E'$. Since $\cJ(\fa^m)\subset\cJ(\fa)^m$ by the subadditivity property of multiplier ideals, this will prove 
    $$
    m\ord_E\left(\cJ(\fa)\right)\le \ord_E\left(\cJ(\fa^m)\right)=-\ord_{E'}\left(K_{\cX'/\cX}-m\mu^\star D\right)=-a(\ord_E)+m\ord_E(\fa), 
    $$
    which yields the desired estimate. 

    To prove the claim, pick a general (closed) point $p\in E$. Since $-D$ is relatively ample and basepoint free over $\cX$, the sheaf of relative sections $\rho_\star\cO_{\tcX}(-D)$ defines a finite morphism to a projective space over $\cX$. We can thus find finitely many relative sections $s_i\in\rho_\star \cO_{\tcX}(-D)$ with an isolated common zero at $p$, so that the ideal $\fb\subset\cO_{\tcX}$ they locally generate is nontrivial at $p$, and trivial at any other nearby point of $\tcX$. 

    Since $\tcX$ is normal, $\mu\colon\cX'\to\tcX$ is an isomorphism over the generic point of $E$, and hence over a neighborhood of $p$. The ideal $\fb':=\fb\cdot\cO_{\cX'}$ locally generated by the relative sections $\mu^\star s_i\in\pi_\star\cO_{\cX'}(-\mu^\star D)$ is thus nontrivial at the preimage $p'\in \cX'$ of $p$, and trivial at any other nearby point of $\cX'$. Using basic properties of multiplier ideals, it follows that the subscheme $\cZ\subset\cX'$ cut out by $\cJ(\fb'^m)$ contains $p'$ as an isolated point. By (relative) Nadel vanishing, we further have $R^1\pi_\star\left(\cF\otimes\cJ(\fb'^m)\right)=0$,
    so that the restriction map $\pi_\star\cF\to\pi_\star(\cF\otimes\cO_\cZ)$ is surjective. We infer that $\cF$ is relatively globally generated over $\cX$ at $p'$, which proves the claim, and concludes the proof.
    \end{proof}

    \begin{rmk}\label{rmk:model}
    The above argument, which relies solely on the fundamental properties of multiplier ideals, applies without change when $\cX$ is a regular flat projective scheme over $k\cro{t}$ and the ideal $\fa\subset\cO_\cX$ is vertical, \ie cosupported in the central fiber, see~\cite[App.~B]{BFJ16}. 
    \end{rmk}

    \subsection{The entropy approximation conjecture}
    Denoting as above by $k$ an algebraically closed field of characteristic $0$, we next consider a smooth, $n$-dimensional projective variety $X$ over the field $K:=k\lau{t}$ of formal Laurent series, together with an ample line bundle $L$ on $X$. We view $K$ as a complete non-Archimedean field, and denote by $X^\an$ the Berkovich analytification of $X$, viewed as a compact Hausdorff topological space whose points are semivaluations $v$ on $X$.

    We will use freely the language of non-Archimedean pluripotential theory, using~\cite{BFJ16,BoJ17,BJ22,BoJ25a} as references. The space $\cE^1=\cE^1(L^\an)$ of \emph{psh metrics of finite energy} $\phi$ on $L^\an$ is equipped with its strong topology, and contains the space $\cH$ of \emph{Fubini--Study metrics} as a dense subset. The (non-Archimedean) \emph{Monge--Amp\`ere operator} 
    $$
    \phi\mapsto\MA(\phi):=\vol(L)^{-1}(\ddc\phi)^n
    $$
    is continuous on $\cE^1$, with values in the space of probability measures on $X^\mathrm{na}$. The \emph{non-Archimedean entropy} of $\phi\in\cE^1$ is defined as 
    \begin{equation}\label{equ:H}
    \hh(\phi):=\int_{X^\an}(A_X-\p)\MA(\phi).
    \end{equation}
    Here $A_X$ denotes the \emph{Temkin metric} on $K_X^\an$ (using additive notation for metrics), and $\p$ is a choice of reference model metric on $K_X^\an$, see~\cite[\S 5.2.2]{BoJ25a}. The singular metric $A_X$ is merely lsc, and can be written as the upper envelope of the family of model metrics 
    $$
    \phi_\cX:=\phi_{K^{\log}_{\cX/S}}
    $$ 
    on $K_X^\an$ associated to each snc model $\cX\to S:=\Spec k\cro{t}$ of $X$. 
    Further, $A_\cX:=A_X-\phi_\cX$ coincides with the log discrepancy function $A_\cX\colon X^{\mathrm{na}}\to[0,+\infty]$ of the log smooth pair $(\cX,\cX_{0,\redu})$, which vanishes precisely on the associated dual complex $\D_\cX\subset X^{\mathrm{na}}$, see for instance~\cite[\S 5]{BoJ17}. 

    The non-Archimedean entropy functional $\hh\colon\cE^1\to\R\cup\{+\infty\}$ is merely lsc. Adapting the arguments leading to Corollary~\ref{corA}, we next establish the entropy approximation conjecture in the present setup. 

    \begin{thm}\label{thm:ent}
    The functional $\hh\colon\cE^1\to\R\cup\{+\infty\}$ is the greatest lsc extension from its restriction to $\cH$; equivalently, any $\phi\in\cE^1$ can be written as the limit of a sequence $(\phi_m)$ in $\cH$ such that $\hh(\phi_m)\to\hh(\phi)$. 
    \end{thm}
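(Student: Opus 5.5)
The argument is a reduction to models, followed by the analogue of Theorem~\ref{thmB} on a regular model over $S=\Spec k\cro{t}$, using Remark~\ref{rmk:model} for the discrepancy estimate. The equivalence of the two formulations is standard. By lower semicontinuity of $\hh$, the greatest lsc extension of $\hh|_{\cH}$ is at least $\hh$. So it suffices to produce, for each $\phi\in\cE^1$, Fubini--Study metrics $\phi_m\to\phi$ strongly with $\limsup\hh(\phi_m)\le\hh(\phi)$.

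\textbf{Step 1: reduction to model metrics.} Following the non-Archimedean analogue of \cite[Prop.~6.3]{Li22b} (cf. \cite[Prop.~7.14]{BJ26}), I would first approximate a general $\phi\in\cE^1$ with $\hh(\phi)<\infty$ by envelopes associated to big line bundles on snc models. Concretely, for an snc model $\cX$ one takes the projection $\phi_\cX:=\env(\phi\circ p_\cX)$ onto the dual complex $\D_\cX$. These $\phi_\cX$ are determined by a $\Q$-line bundle $\cL$ on $\cX$, possibly twisted by vertical divisors, with $\cL$ relatively big. As $\cX$ ranges over increasingly fine snc models, one has $\phi_\cX\to\phi$ strongly and $\hh(\phi_\cX)\to\hh(\phi)$. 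This uses that $\MA(\phi_\cX)$ is supported on $\D_\cX$, where $A_\cX=0$, together with monotonicity of $A_\cX$ in $\cX$. For such a model metric, the entropy is $\hh(\phi_\cX)=\vol(L)^{-1}\langle\cL^n\rangle\cdot K^{\log}_{\cX/S}$ up to the reference $\p$. This is the analogue of $\langle\cL^n\rangle\cdot K_{\cX/X\times\P^1}$.

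\textbf{Step 2: special Fujita approximation on the model.} Fix a regular snc model $\cX$ and a big $\cL$ whose augmented base locus is vertical. After adding $c\cX_0$ and pulling back as in Step~0 of the proof of Theorem~\ref{thmB}, I may assume $\mathbf{B}_+(\cL)=\mathrm{Supp}(\cE)$ with $\cE$ vertical and $\cL-\cE$ ample. I then run Steps 2--3 of that proof verbatim, with $\fa_m:=\fb(|m\cL|)$ vertical ideals. Castelnuovo--Mumford regularity together with relative asymptotic Nadel vanishing (valid by \cite[App.~B]{BFJ16}) gives $\fb_m\cdot\fc^2\subset\fa_m$ with $\fc=\cO_\cX(-m_0\cE)$. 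Proposition~\ref{prop:key}, through Remark~\ref{rmk:model}, then gives
\[
\cL_m^{n}\cdot K_{\cY_m/\cX}\le 2(n+1)m_0\,\cL_m^{n}\cdot p_m^*\cE,
\]
where $\cY_m$ is a log resolution of $\fa_m$. The right-hand side tends to $\langle\cL^n\rangle\cdot\cE=0$ by orthogonality (Theorem~\ref{thm:Orthogonality} on the model, in the form of \cite{BFJ09}). A perturbation as in Step~1 of that proof makes $\cL_m$ ample, so that the resulting metrics lie in $\cH$. Since the blowups are vertical, the $\phi_{\cL_m}$ are Fubini--Study metrics on $L^\an$, and $\phi_{\cL_m}\to\phi_\cL$ strongly. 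Continuity of $\MA$ on $\cE^1$ then gives
\[
\cL_m^n\cdot p_m^*K^{\log}_{\cX/S}\to\langle\cL^n\rangle\cdot K^{\log}_{\cX/S}.
\]
Combined with the vanishing of the $K_{\cY_m/\cX}$-term (log discrepancies differ from $K_{\cY_m/\cX}$ only by vertical reduced components, which are controlled in the same way), this yields $\hh(\phi_m)\to\hh(\phi_\cL)$. A diagonal argument with Step~1 then finishes the proof.

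\textbf{Main obstacle.} The main obstacle is the orthogonality/volume-convergence input on a regular scheme over $k\cro{t}$, rather than on a projective variety. I would handle it by invoking the results of \cite{BFJ16} for vertical divisors on models, or alternatively by algebraizing through the degeneration to a projective family over a curve, as is done in \cite{BFJ16}.
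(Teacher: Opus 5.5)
Your Step~2 is essentially the paper's proof, phrased with intersection numbers on the model instead of measures. The paper takes $\phi_m=\phi_\cL+m^{-1}\log|\fa_m|$ and uses \cite[Thm.~8.5]{BFJ16} for $\phi_m\to\env(\phi_\cL)$ uniformly. It then gets $\int A_\cX\,\MA(\phi_m)\le 2(n+1)m_0\int\phi_D\,\MA(\phi_m)$ from Proposition~\ref{prop:key}, the inequality $A_\cX(v_E)\le v_E(K_{\cX_m/\cX})$ and Lemma~\ref{lem:comp}. Two points are simpler than in your version. First, no perturbation to ample models is needed, since the Fubini--Study metrics of the lattices $H^0(\cX,m\cL)$ already lie in $\cH$. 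Second, the obstacle you single out is handled on the non-Archimedean side. $\MA(\env(\phi_\cL))$ is carried by the $v_E$ with $E\not\subset\B_+(\cL/S)$ (as in \cite[Lem.~4.8]{BJ26}). Hence $\int\phi_D\,\MA(\phi_m)\to\int\phi_D\,\MA(\env(\phi_\cL))=0$ by weak continuity of $\MA$, since $\phi_D$ is continuous. No orthogonality theorem on the $k\cro{t}$-scheme is required.

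The genuine gap is Step~1. The reduction the paper invokes (\cite[Thm.~3.10]{BJ23II}, \cite[Prop.~6.3]{Li22b}) projects the \emph{measure}, not the potential: one solves $\MA(\psi_\cX)=(p_\cX)_\star\MA(\phi)$. The entropy bound comes directly from this identity: $\hh(\psi_\cX)=\int(A_X-\p)\circ p_\cX\,\MA(\phi)\le\hh(\phi)$. Indeed, $A_\cX\circ p_\cX=0$ gives $(A_X-\p)\circ p_\cX=A_X-\p-A_\cX\le A_X-\p$ once $\cX$ dominates a model of $\p$. A further discretization of $(p_\cX)_\star\MA(\phi)$ into atoms at divisorial points then produces envelopes $\env(\phi_\cL)$ of model metrics.

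With your $\env(\phi\circ p_\cX)$, neither conclusion follows.
\begin{itemize}
\item $\phi\circ p_\cX$ is a model function on $\cX$ only when $\phi|_{\D_\cX}$ is affine on faces, whereas it is merely convex there. So you do not land on envelopes of model metrics on $\cX$.
\item More seriously, support of the Monge--Amp\`ere measure on $\D_\cX$ only gives $\hh(\env(\phi\circ p_\cX))=\int(\phi_{K^{\log}_{\cX/S}}-\p)\,\MA(\env(\phi\circ p_\cX))$. Here the integrand increases with $\cX$ towards the merely lsc function $A_X-\p$. Combined with weak convergence of the measures, this yields only $\liminf\ge\hh(\phi)$, which you already have from lower semicontinuity.
\end{itemize}
The missing $\limsup$ bound would require $\MA(\env(\phi\circ p_\cX))=(p_\cX)_\star\MA(\phi)$, and there is no general reason for this: Monge--Amp\`ere does not commute with retracting potentials. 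Replacing your Step~1 by the measure projection plus discretization repairs the argument.
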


    \begin{rmk} As the proof will show, the approximating sequence is obtained in a sufficiently canonical way to ensure that invariance under a given algebraic group action is preserved.
    \end{rmk}

    The non-Archimedean entropy differs from the \emph{non-Archimedean Mabuchi functional} 
    $$
    \mab\colon\cE^1\to\R\cup\{+\infty\}
    $$
    by the $J$-functional, which is finite-valued and continuous on $\cE^1$, see~\cite[\S 5.2]{BoJ25a}. As a result, the conclusion of Theorem~\ref{thm:ent} also applies to $\mab$, which implies:

    \begin{cor}  The non-Archimedean Mabuchi functional $\mab\colon\cE^1\to\R\cup\{+\infty\}$ is bounded below (resp.~coercive) on $\cE^1$ iff it is on $\cH$. 
    \end{cor}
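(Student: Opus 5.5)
The corollary follows from Theorem~\ref{thm:ent} together with the decomposition of $\mab$ recalled just before it. One direction is trivial: $\cH\subset\cE^1$, so if $\mab$ is bounded below (resp.\ coercive) on $\cE^1$, the same inequality holds on $\cH$. All the work is in the converse, which I would prove by approximating an arbitrary $\phi\in\cE^1$ by Fubini--Study metrics along which the Mabuchi functional converges.

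\textbf{Step 1: continuity of $\mab$ along the approximating sequence.} Write $\mab=\hh+R$, where $R$ is the energy-type term: by the discussion above (see~\cite[\S 5.2]{BoJ25a}), $\mab$ and $\hh$ differ by a finite-valued functional that is continuous for the strong topology on $\cE^1$. Fix $\phi\in\cE^1$ with $\hh(\phi)<+\infty$. Theorem~\ref{thm:ent} gives $\phi_m\in\cH$ with $\phi_m\to\phi$ strongly and $\hh(\phi_m)\to\hh(\phi)$. Continuity of $R$ then gives $R(\phi_m)\to R(\phi)$, so
$$
\mab(\phi_m)\longrightarrow\mab(\phi).
$$
If instead $\hh(\phi)=+\infty$, then $\mab(\phi)=+\infty$, and any lower bound holds trivially at $\phi$.

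\textbf{Step 2: passing the inequalities to the limit.} For boundedness below, suppose $\mab\geq -C$ on $\cH$. Then
$$
\mab(\phi)=\lim_{m}\mab(\phi_m)\geq -C .
$$
For coercivity, suppose $\mab\geq\delta\,\jj-C$ on $\cH$, where $\jj$ is the norm-like functional used to define coercivity (the $\jj$- or $I$-functional, or the distance to a base point). The only input needed here is that $\jj$ is finite-valued and continuous on $\cE^1$ for the strong topology, which is standard for these energy functionals. Then $\jj(\phi_m)\to\jj(\phi)$, and letting $m\to+\infty$ in
$$
\mab(\phi_m)\geq\delta\,\jj(\phi_m)-C
$$
gives $\mab(\phi)\geq\delta\,\jj(\phi)-C$.

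\textbf{Possible extra care.} If coercivity is meant modulo a group action (a twisted $\jj$, i.e.\ an infimum over twists), the argument still goes through. The remark after Theorem~\ref{thm:ent} says the approximants can be taken invariant, and for the resulting infimum of continuous functionals only upper semicontinuity in $\phi$ is needed: given $\varepsilon>0$, the twisted $\jj(\phi_m)$ is eventually at most $\jj$ of a fixed $\varepsilon$-optimal twist of $\phi_m$, which converges. The substantive difficulty, producing Fubini--Study approximants along which the entropy converges rather than merely being lower semicontinuous, is already settled by Theorem~\ref{thm:ent}. The corollary is therefore a formal consequence of that theorem, and I expect no real obstacle beyond this bookkeeping.
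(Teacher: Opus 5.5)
Your argument is correct and is essentially the paper's proof. Since $\mab$ and $\hh$ differ by a finite-valued functional that is continuous on $\cE^1$, Theorem~\ref{thm:ent} transfers to $\mab$, and the lower bounds pass to the limit along the approximants $\phi_m\in\cH$ (using continuity of $\jj$ for coercivity), while the converse direction is trivial.

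The one slip is in your optional remark on twisted norms, which this statement does not need. To pass $\mab(\phi_m)\ge\delta\inf_\xi\jj((\phi_m)_\xi)-C$ to the limit, you need $\inf_\xi\jj(\phi_\xi)\le\limsup_m\inf_\xi\jj((\phi_m)_\xi)$, i.e.\ lower semicontinuity along $\phi_m$. Fixing an $\varepsilon$-optimal twist gives only the opposite, upper semicontinuous, inequality, so that case would need an extra input such as equicontinuity of the twisted functionals.
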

    When $(X,L)$ is obtained by ground field extension from $k$, boundedness from below (resp.~coercivity) of $\mab$ on $\cH$ corresponds to $K$-semistability (resp.~uniform $K$-stability) with respect to arcs, in the sense of~\cite{DR24}. 

    \medskip

    In a first step towards the proof of Theorem~\ref{thm:ent}, arguing exactly as in the proof of~\cite[Thm.~3.10]{BJ23II} and~\cite[Prop.~6.3]{Li22b} (which originates in~\cite[\S 7.4]{BoJ18a}) by projecting the measure $\MA(\phi)$ to a dual complex and solving the Monge--Ampère equation, one reduces to the case where $\phi=\env(\phi_\cL)$ is the psh envelope of a model metric associated to an snc model $(\cX,\cL)$ of $(X,L)$. Theorem~\ref{thm:ent} is now a consequence of the following more precise result: 

    \begin{thm} Let $\phi_\cL$ be the model metric on $L^{\mathrm{na}}$ associated to an snc model $(\cX,\cL)$ of $(X,L)$. Denote by $\phi=\env(\phi_\cL)$ its psh envelope, and by $\phi_m$ the Fubini--Study metric associated to the lattice $H^0(\cX,m\cL)$ of $H^0(X,mL)$, for each $m\gg 1$. Then $\phi_m\to\phi$ uniformly, and $\hh(\phi_m)\to\hh(\phi)$. 
    \end{thm}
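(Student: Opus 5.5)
We prove the two convergences separately. Uniform convergence $\phi_m\to\phi$ is the standard non-Archimedean version of Fujita's approach to the volume. Convergence of entropy is the analogue of Step 3 of the proof of Theorem~\ref{thmB}, with Proposition~\ref{prop:key} (valid over $k\cro{t}$ by Remark~\ref{rmk:model}) replacing Proposition~\ref{thm:Discrepancies_Bouck}.

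\textbf{Uniform convergence.} Let $\fa_m\subset\cO_\cX$ be the base ideal of $m\cL$; it is vertical since $L$ is ample. Then $\phi_m=m^{-1}\log|\fa_m|+\phi_\cL$. Fekete's lemma gives $\phi_m\to\env(\phi_\cL)$ pointwise and monotonically along a subsequence. For uniformity, we use the model analogue of Step 2 of Theorem~\ref{thmB}: relative Castelnuovo--Mumford regularity and Nadel vanishing over $S$ give a fixed vertical ideal $\fc$ with $\fb_m\cdot\fc\subset\fa_{m+m_0}$, where $\fb_m=\cJ(\fa_\bullet^m)$. Since $\fa_m\subset\fb_m$, the metrics $m^{-1}\log|\fb_m|+\phi_\cL$ dominate $\phi$ (by the valuative description of asymptotic multiplier ideals). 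Hence $0\le\phi-\phi_m\le C/m$ on $X^\an$, because $\log|\fc|$ is bounded on $X^\an$ as $\fc$ is vertical.

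\textbf{Entropy.} Let $\rho_m\colon\tcX_m\to\cX$ be the normalized blowup of $\fa_m$, and $\cX'_m$ a log resolution dominating it. The model $(\cX'_m,\cL_m)$, with $\cL_m:=\rho_m^*\cL-m^{-1}D_m$ semiample, represents $\phi_m$, so $\MA(\phi_m)$ is supported on Rees valuations of $\fa_m$ (the divisorial points of $\tcX_m$ with mass $\cL_m^{n}\cdot E$). We split \eqref{equ:H} as
$$\hh(\phi_m)=\int(A_\cX)\,\MA(\phi_m)+\int(\phi_\cX-\p)\,\MA(\phi_m).$$
The second term is the integral of a continuous (model) function, so it converges to the corresponding term for $\phi$ by continuity of $\MA$ on $\cE^1$. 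For the first term, $A_\cX(\ord_E)=1+a(\ord_E)-\ord_E(\cX_{0,\redu})$ up to normalization by $b_E$. Since $\MA(\phi_m)$ charges only Rees valuations of $\fa_m$, Proposition~\ref{prop:key} gives
$$0\le\int A_\cX\,\MA(\phi_m)-(\text{corresponding terms bounded by }\ord_E\cX_0)\le (n+1)\int\big(v(\fa_m)-v(\cJ(\fa_m))\big)\MA(\phi_m).$$
Using $\fb_{m}\fc^2\subset\fa_{m}$ from Step 2, this is at most $C\,m\int\log|\fc|^{-1}\,\MA(\phi_m)$, with the factor $m$ absorbed by the normalization $v\mapsto v/m$ built into $\phi_m$. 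We then need $\int\log|\fc|^{-1}\MA(\phi_m)\to\int\log|\fc|^{-1}\MA(\phi)=0$. Continuity holds because $\log|\fc|$ is a model function, and vanishing is the orthogonality property (the analogue of Theorem~\ref{thm:Orthogonality}), which holds since $\phi$ is the envelope and $\fc$ is cosupported where $\phi<\phi_\cL$. Together with lower semicontinuity of $\hh$, this yields $\hh(\phi_m)\to\hh(\phi)$.

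\textbf{Main obstacle.} The delicate point is this orthogonality step. We must choose $\fc$ cosupported on the non-ample locus of $\cL$, mirroring the reduction $\mathbf{B}_+(L)=\mathrm{Supp}(E)$ made after a preliminary blowup. We must also invoke the non-Archimedean orthogonality $\int(\phi_\cL-\env(\phi_\cL))\MA(\env\phi_\cL)=0$ over $k\lau{t}$, which is available from \cite{BFJ16,BoJ25a}. I would first try to take $\fc$ as a power of the ideal defining $\cE$ in a decomposition $\cL=\cA+\cE$ with $\cA$ ample and $\cE$ vertical effective, after replacing $\cX$ by a suitable blowup.
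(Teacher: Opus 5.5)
Your architecture is the paper's. You split $\hh=\int A_\cX\,\MA+\int f\,\MA$ with $f=\phi_\cX-\p$ continuous. You bound $A_\cX(v_E)\le v_E(K_{\cX_m/\cX})$ on the Rees valuations of $\fa_m$ charged by $\MA(\phi_m)$ and apply Proposition~\ref{prop:key}. You compare $\fa_m$ with $\fb_m=\cJ(\fa_\bullet^m)$ through uniform global generation, and you finish by orthogonality. Closing with lower semicontinuity of $\hh$, instead of first checking that $\MA(\phi)$ lives on $\D_\cX$, is a harmless variation.

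One bookkeeping point needs correcting: there is no factor $m$ to ``absorb''. Proposition~\ref{prop:key} compares \emph{unnormalized} orders, and $\fb_m\mathfrak{c}^2\subset\fa_m$ together with $\cJ(\fa_m)\subset\fb_m$ gives directly $v(\fa_m)-v(\cJ(\fa_m))\le v(\fa_m)-v(\fb_m)\le 2v(\mathfrak{c})$, uniformly in $m$ (this is Lemma~\ref{lem:comp}). That this unnormalized gap stays bounded is the crux of the argument. A genuine factor $m$ could not be absorbed by any normalization: $\MA(\phi_m)$ is a probability measure, and you have no rate for $\int\log|\mathfrak{c}|^{-1}\MA(\phi_m)\to 0$.

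The genuine gap is the vanishing $\int\log|\mathfrak{c}|^{-1}\MA(\phi)=0$, which is the step you flag as the main obstacle. You justify it by the envelope orthogonality $\int(\phi_\cL-\phi)\MA(\phi)=0$ and by $\mathfrak{c}$ being cosupported ``where $\phi<\phi_\cL$''. This does not work.
\begin{itemize}
\item Envelope orthogonality only places $\MA(\phi)$ on the contact set $\{\phi=\phi_\cL\}$.
\item The ideal you actually produce by Castelnuovo--Mumford regularity and Nadel vanishing is $\mathfrak{c}=\cO_\cX(-m_0\cE)$ with $\cL-\cE$ relatively ample. Hence $V(\mathfrak{c})=\mathrm{Supp}\,\cE\supseteq\mathbf{B}_+(\cL/S)$, and this set typically contains centers of points of the contact set.
\item For example, let $\cL$ be the pullback of a relatively ample bundle under a blowup centered in the central fiber. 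Then $\phi=\phi_\cL$ everywhere, so envelope orthogonality is vacuous, while $\mathbf{B}_+(\cL/S)$ is the whole exceptional locus.
\end{itemize}
The missing input is the $\mathbf{B}_+$-clause of Theorem~\ref{thm:Orthogonality} in its relative non-Archimedean form. It says that $\MA(\phi)$ is carried by the finitely many $v_E=b_E^{-1}\ord_E$ with $E$ a component of $\cX_0$ \emph{not contained in} $\mathbf{B}_+(\cL/S)$; this is the counterpart of $\langle\cL^n\rangle\cdot E=0$ for $E\subseteq\mathbf{B}_+(\cL/S)$. The paper obtains it as in \cite[Lem.~4.8]{BJ26}, after passing to a higher snc model on which $\mathbf{B}_+(\cL/S)=\mathrm{Supp}\,\cE$ exactly; that change of model affects none of $\phi_m$, $\phi$, $\hh$. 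This input gives $\int\phi_\cE\,\MA(\phi)=0$ and closes the argument. Without it, the vanishing is not established.
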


    \begin{proof} Any vertical ideal $\fa$ defines a model function $\log|\fa|$ on $X^{\mathrm{na}}$ by setting 
    $$
    \log|\fa|(v):=-v(\fa).
    $$
    Denoting by $\fa_m\subset\cO_\cX$ the relative base ideal of $m\cL$, we then have 
    $$
    \phi_m=\phi_\cL+m^{-1}\log|\fa_m|, 
    $$
    and the first assertion thus amounts to~\cite[Thm.~8.5]{BFJ16}.

    Consider next the irreducible decomposition $\cX_0=\sum_E b_E E$ of the central fiber of $\cX$. Arguing just as in~\cite[Lem.~4.8]{BJ26}, one infers from the orthogonality property that $\MA(\phi)$ is supported in the (finite) set of divisorial valuations $v_E=b_E^{-1}\ord_E\in\D_\cX$ such that $E$ is not contained in the relative augmented base locus $\B_+(\cL/S)$. After perhaps replacing $\cX$ with a higher snc model, we can further assume that the relative augmented base locus $\B_+(\cL/S)$ is the support of an effective vertical $\Q$-divisor $D$ on $\cX$ such that $\cL-D$ is relatively ample over $S$. As a consequence, the corresponding model function $\phi_D\ge 0$ satisfies 
    $$
    \int\phi_D\MA(\phi)=0.
    $$
    By~\eqref{equ:H}, for any $\tau\in\cE^1$ we have 
    $$
    \hh(\tau)=\int (A_\cX+f)\,\MA(\tau),\quad f:=\phi_\cX-\p\in C^0(X^{\mathrm{na}}).
    $$
    Since $\phi_m\to\phi$ uniformly, we have $\MA(\phi_m)\to\MA(\phi)$ weakly, and hence
    $$
    \int f\,\MA(\phi_m)\to\int f\,\MA(\phi)=\hh(\phi),
    $$
    since $\MA(\phi)$ is supported in the dual complex $\D_\cX=\{v\in X^{\mathrm{na}}\mid A_\cX(v)=0\}$. The second point of the theorem is thus equivalent to 
    \begin{equation*}
    \lim_m \int A_\cX\MA(\phi_m)=0.
    \end{equation*}
    We claim
    \begin{equation}\label{equ:bd}
    \int A_\cX\MA(\phi_m)\le C\int \phi_D\MA(\phi_m)
    \end{equation}
    for a uniform constant $C>0$, which will conclude the proof since 
    $$
    \int\phi_D\MA(\phi_m)\to\int\phi_D\MA(\phi)=0.
    $$ 
    As in~\cite[\S 8.1]{BFJ16}, consider the asymptotic multiplier ideal $\fb_m:=\cJ(\fa_\bullet^m)\subset\cO_\cX$, and denote by $\rho_m\colon\cX_m\to\cX$ the normalized blowup of $\fa_m$. By construction, the measure $\MA(\phi_m)$ is supported in the valuations $v_E\in X^\div$ attached to the irreducible components $E$ of the central fiber $\cX_{m,0}$. For each such $E$, Proposition~\ref{prop:key} (and Remark~\ref{rmk:model}) further yields 
    $$
    v_E(K_{\cX_m/\cX})\le(n+1)(v_E(\fa_m))-v_E(\cJ(\fa_m)))\le (n+1)(v_E(\fa_m)-v_E(\fb_m)), 
    $$
    using $\cJ(\fa_m)\subset\fb_m$. Since $A_\cX$ is the log discrepancy function of the pair $(\cX,\cX_{0,\redu})$, we further have 
    $$
    A_\cX(v_E)=v_E\left((K_{\cX_m}+\cX_{m,0,\redu})-\rho_m^\star(K_{\cX}+\cX_{0,\redu})\right)\le v_E(K_{\cX_m/\cX})
    $$
    since $\rho_m^\star\cX_{0,\redu}\ge\cX_{m,0,\redu}$, and we infer
    \begin{equation}\label{equ:bd1}
    \int A_\cX\MA(\phi_m)\le(n+1)\int(\log|\fb_m|-\log|\fa_m|)\MA(\phi_m).
    \end{equation}
    The uniform global generation property of multiplier ideals next yields the existence of $m_0\in\N$ such that, for any $m\in\N$, the sheaf
    $$
    \cO_\cX((m+m_0)\cL)\otimes\fb_m(-m_0 D)
    $$
    is globally generated. This implies $\fb_m(-m_0 D)\subset\fa_{m+m_0}$, and hence 
    $$
    0\le\log|\fb_m|-\log|\fa_m|\le 2m_0 \phi_D
    $$
    for $m\ge m_0$ by the elementary Lemma~\ref{lem:comp} below, since $(\fa_m)$ is a graded sequence while $(\fb_m)$ is antigraded by the subadditivy property of multiplier ideals. Thus 
    $$
    0\le\int(\log|\fb_m|-\log|\fa_m|)\MA(\phi_m)\le 2m_0 \int\phi_D\MA(\phi_m),
    $$
    which combines with~\eqref{equ:bd1} to yield~\eqref{equ:bd}, and concludes the proof.
    \end{proof}

    \begin{lem}\label{lem:comp} Consider two sequences of vertical ideals $\fa_m,\fb_m\subset\cO_\cX$ on a model $\cX$ of $X$, and suppose that 
    \begin{itemize}
    \item[(i)] $(\fa_m)$ is a graded sequence, \ie $\fa_m\cdot\fa_p\subset\fa_{m+p}$ for all $m,p$;
    \item[(ii)] $(\fb_m)$ is antigraded, \ie $\fb_{m+p}\subset\fb_m\cdot\fb_p$ for all $m,p$;
    \item[(iii)] $\fa_m\subset\fb_m$ for all $m$, and $\fb_m(-F)\subset\fa_{m+m_0}$ for a fixed vertical divisor $F\ge 0$ and $m_0\in\N$. 
    \end{itemize}
    For any $m\geq m_0$ we then have
    $$
    0\le\log|\fb_m|-\log|\fa_m|\le 2\phi_F
    $$
    on $X^\an$. 
    \end{lem}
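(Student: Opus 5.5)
The inequalities are valuative, so I will prove them pointwise at each $v\in X^{\an}$, i.e.\ for each semivaluation $v$. By definition $\log|\fa|(v)=-v(\fa)$, and I use $\phi_F(v)=v(F)\ge 0$ for the vertical effective divisor $F$ (sign conventions as in the proof above, where $\phi_D\ge0$). The claim then reads
$$0\le v(\fa_m)-v(\fb_m)\le 2v(F)\qquad\text{for all } m\ge m_0 .$$
By homogeneity and density of divisorial points, together with continuity of model functions, it is enough to treat divisorial $v$. Every step below is an inclusion of ideals, so the argument should in fact work for every $v$ directly.

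The lower bound is immediate. Since $\fa_m\subset\fb_m$ by (iii), and valuations reverse inclusions, we get $v(\fa_m)\ge v(\fb_m)$.

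For the upper bound I will follow the chain of inclusions used in Step 2 of the proof of Theorem~\ref{thmB}. For $m\ge m_0$, I write $m=m_0+(m-m_0)$. Antigradedness (ii) gives $\fb_m\subset\fb_{m_0}\cdot\fb_{m-m_0}$, and since $\fb_{m_0}\subset\cO_\cX$ this yields $\fb_m(-F)\subset\fb_{m-m_0}(-F)$. Applying (iii) with $m-m_0$ in place of $m$ gives $\fb_{m-m_0}(-F)\subset\fa_{m}$. So in fact
$$\fb_m(-F)\subset\fa_m \quad\text{for } m\ge m_0,$$
which already yields the bound with constant $1$. I will double-check that (iii) is meant for all $m\ge 0$, including $m-m_0=0$ where $\fb_0=\cO_\cX$.

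If the indices need a margin, for instance if (iii) is only asserted for $m\ge 1$ or one wants to avoid $\fb_0$, I will use the route of the main text. For $m\ge 2m_0$ I apply (ii), then (iii), then (i):
$$\fb_m(-2F)\subset\fb_{m_0}\,\fb_{m-m_0}(-F)(-F)\subset\fb_{m_0}(-F)\,\fa_{m}\subset\fa_{2m_0}\,\fa_m\subset\fa_{m+2m_0}.$$
This is not yet $\fa_m$. Using gradedness in the opposite direction does not work, because $\fa_{m+2m_0}\not\subset\fa_m$ in general. The cleaner choice is to split off $m_0$ only once, as above. Taking valuations of $\fb_m(-F)\subset\fa_m$ gives $v(\fa_m)\le v(\fb_m)+v(F)\le v(\fb_m)+2v(F)$, since $v(F)\ge0$. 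The only point needing care is this index bookkeeping, namely making sure the shift lands exactly on $\fa_m$. I expect no genuine obstacle beyond that.
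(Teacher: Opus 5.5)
Your proof is correct, and it takes a more direct route than the paper's.

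The paper's proof goes through the homogenized function $\log|\fa_\bullet|=\sup_k k^{-1}\log|\fa_k|$. This requires (i), since Fekete's lemma is needed for the sup. Its chain is as follows:
\begin{itemize}
\item by (iii), $\log|\fb_m|-\phi_F\le\log|\fa_{m+m_0}|$;
\item this is $\le (m+m_0)\log|\fa_\bullet|\le (m-m_0)\log|\fa_\bullet|$;
\item by (ii) and $\fa\subset\fb$, this is $\le\log|\fb_{m-m_0}|$;
\item by (iii) again, this is $\le\log|\fa_m|+\phi_F$.
\end{itemize}
The second half of (iii) is applied twice, and that is where the constant $2$ comes from.

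You instead observe that antigradedness alone makes $(\fb_m)$ decreasing: $\fb_m\subset\fb_{m_0}\fb_{m-m_0}\subset\fb_{m-m_0}$. A single application of (iii) then gives $\fb_m(-F)\subset\fa_m$, hence $0\le\log|\fb_m|-\log|\fa_m|\le\phi_F$. This is a sharper constant, with no limiting process and no use of (i) at all.

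The constant is irrelevant for \eqref{equ:bd}, where any uniform $C$ suffices. The same remark would also sharpen $\mathfrak{b}_m\cdot\mathfrak{c}^2\subset\mathfrak{a}_m$ in Step 2 of the proof of Theorem~\ref{thmB} to $\mathfrak{b}_m\cdot\mathfrak{c}\subset\mathfrak{a}_m$. So the paper's detour mainly buys a parallel with the asymptotic picture, while yours is the minimal argument.

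On your index worry: the paper faces the identical point. At $m=m_0$ it applies \eqref{equ:fb} and \eqref{equ:fb1} with $m-m_0=0$ in place of $m$, which needs $\fb_0=\cO_\cX$. This is harmless in the application, since $\fa_0=\fb_0=\cO_\cX$ and the uniform global generation holds for all $m\in\N$.

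Without the index-$0$ convention the statement genuinely fails at $m=m_0$. Take $m_0=1$, $\fb_m=\cO_\cX$, $\fa_1=\cO_\cX(-100F)$, and $\fa_m=\cO_\cX(-F)$ for $m\ge 2$. Conditions (i)--(iii) hold for all $m\ge 1$, yet $\log|\fb_1|-\log|\fa_1|=100\phi_F$. Meanwhile your main argument already covers every $m>m_0$ verbatim, so the fallback paragraph with $m\ge 2m_0$ is unnecessary.
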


    \begin{proof} By (i), the sequence $m\mapsto\log|\fa_m|$ is superadditive, and hence 
    $$
    \log|\fa_\bullet|:=\lim_{m\to\infty} m^{-1} \log|\fa_m|=\sup_{m\ge 1} m^{-1}\log|\fa_m|
    $$
    pointwise on $X^\an$. By (ii), (iii) we further have $\fa_{m\ell}\subset\fb_{m\ell}\subset\fb_m^\ell$ for all $m,\ell$, and hence 
    \begin{equation}\label{equ:fb}
    m\log|\fa_\bullet|=\lim_\ell \ell^{-1}\log|\fa_{m\ell}|\le\log|\fb_m|,
    \end{equation}
    while (iii) also yields
    \begin{equation}\label{equ:fb1}
    \log|\fb_m|-\phi_F\le\log|\fa_{m+m_0}|. 
    \end{equation}
    For $m\ge m_0$ we infer 
    $$
    \log|\fb_m|-\phi_F\le (m+m_0)\log|\fa_\bullet|\le (m-m_0)\log|\fa_\bullet|\le\log|\fb_{m-m_0}|\le\log|\fa_m|+\phi_F
    $$
    where the last two inequalities follow from~\eqref{equ:fb},~\eqref{equ:fb1} with $m-m_0$ in place of $m$. The result follows. 
    \end{proof}

{\footnotesize
\bibliographystyle{acm}
\bibliography{main}
}
\end{document}